\documentclass[12pt,reqno]{amsart}

\usepackage{amssymb}
\usepackage{amsthm}
\usepackage{amsmath}
\usepackage{amsaddr}
\usepackage{bm}
\usepackage{cite}
\usepackage{mathrsfs}
\usepackage{xcolor}

\usepackage[OT1]{fontenc}
\RequirePackage{times}

\usepackage[left=2.3cm, right=2.3cm, top=3cm]{geometry}

\flushbottom
\allowdisplaybreaks
\newtheorem{theorem}{Theorem}[section]

\newtheorem{lemma}[theorem]{Lemma}

\theoremstyle{remark}
\newtheorem{remark}[theorem]{Remark}

\def\vp{\varphi}

\def\n{\textbf{\textit{n}}}
\def\R3{\mathbb{R}^3}
\def\F2o{\overline{F_2}}
\def \N{\mathcal{N}}
\def \An{A^{-1}}
\def \E{E}

\def\d{{\rm d}}
\def \l {\langle}
\def \r {\rangle}
\def\V{{\boldsymbol{V}}}

\def\H{\mathbf{H}}

\def\u{\textbf{\textit{u}}}
\def\uu{\textbf{\textit{u}}}
\def\vv{\textbf{\textit{v}}}
\def\ww{\textbf{\textit{w}}}
\def\ddt{\frac{\d}{\d t}}

\def\vp{\varphi}

\def\P{\mathbb{P}}

%%%%%%%%%%%%%%%%%%%%%%%%%%%%%
% BIBLIOGRAPHY
\def \au {\rm}
\def \ti {\it}
\def \jou {\rm}
\def \bk {\it}
\def \no#1#2#3 {{\bf #1} (#3), #2.}
%\no{Vol}{Pag}{Year}
\def \eds#1#2#3 {#1, #2, #3.}
\def \nome#1#2 {{\bf #1}, (#2).}
%\eds{Pub}{City}{Year}
%%%%%%%%%%%%%%%%%%%%%%%%%%%%%

%TITLE
%
%\makeatletter
%\def\@settitle{\begin{center}%
%  \baselineskip14\p@\relax
%    \bfseries
%    %\normalfont
%    \large%<- NEW
%%\uppercasenonmath\@title
%  \@title
%  \end{center}%
%}
%\makeatother
%
%\setlength\parindent{0pt}

\begin{document}

\title[Analysis of the Hele-Shaw-Cahn-Hilliard system]
{Well-posedness of a diffuse interface model \\ for Hele-Shaw flows}

\author[Andrea Giorgini]{Andrea Giorgini}
\address{Department of Mathematics, Indiana University\\
Bloomington, IN 47405, USA\\
agiorgin@iu.edu}
%\email{agiorgin@iu.edu}

%\address{$^2$Indiana University, Institute for Scientific Computing and Applied Mathematics,
%\newline\indent
%Swain Hall, Bloomington, IN 47405, USA}

%\date{\today}

\subjclass[2010]{35D35, 35Q35, 35K61, 76D27}

\keywords{Darcy's law, Cahn-Hilliard equation, Logarithmic potentials, Uniqueness, Strong solutions}

\begin{abstract}
\noindent
We study a diffuse interface model describing the motion of two viscous fluids driven by the surface tension in a Hele-Shaw cell. The full system consists of the Cahn-Hilliard equation coupled with the Darcy's law. We address the physically relevant case in which the two fluids have different viscosities (unmatched viscosities case) and the free energy density is the logarithmic Helmholtz potential. In dimension two we prove the uniqueness of weak solutions under a regularity criterion, and the existence and uniqueness of global strong solutions. In dimension three we show the existence and uniqueness of strong solutions, which are local in time for large data or global in time for appropriate small data. These results extend the analysis obtained in the matched viscosities case by {\it Giorgini, Grasselli \& Wu (Ann. Inst. H. Poincar\'{e} Anal. Non Lin\'{e}aire \textbf{35} (2018), 318-360)}. 
Furthermore, we prove the uniqueness of weak solutions in dimension two by taking the well-known polynomial approximation of the logarithmic potential.
\end{abstract}

\maketitle

%%%%%%%%%%%%%%%%%%%%%%%%%%%%%%%%%%%%%%%%%%%%%%%%%%%%%%%%%%%%%%%%%%%%

\section{Introduction}

In this paper we consider a diffuse interface model for binary fluids flows driven by the surface tension between two flat plates separated by a narrow gap. This is known in literature as Hele-Shaw-Cahn-Hilliard (HSCH) system \cite{DGL,LLG2002,LLG2002-2}. 
In a bounded domain $\Omega\subset \mathbb{R}^d$, $d=2,3$, with smooth boundary $\partial \Omega$, the dynamics of the difference of the fluids concentrations $\vp$ is governed by the Cahn-Hilliard equation that reads as
\begin{equation}
\label{CH}
\partial_t \vp+ \u \cdot \nabla \vp =   \Delta (- \Delta \vp +  \Psi'(\vp)),
\end{equation}
where the fluid velocity $\uu$ and the pressure $p$ are given by the Darcy's law 
\begin{equation}
\label{DL}
\nu(\vp) \u  +\nabla p = (-\Delta \vp+\Psi'(\vp)) \nabla \vp, \quad 
\mathrm{div}\, \uu=0.
\end{equation}
Here, $\nu$ is the viscosity of the mixture and $\Psi$ is the free energy density of mixing. Some physical parameters have been scaled to one for simplicity. By virtue of its definition, the state variable $\vp$ takes values in the interval $[-1,1]$, where $1$ and $-1$ are the homogeneous states (pure concentrations). The term $\Delta(-\Delta \vp+\Psi'(\vp))$ in \eqref{CH} accounts for diffusion mechanisms due to mixing in the system. The Korteweg term $(-\Delta \vp+\Psi'(\vp))\nabla \vp$ in \eqref{DL}, which can be rewritten as $-\mathrm{div}\, (\nabla \vp\otimes \nabla \vp)$ (up to redefine the pressure), models capillary forces due to the surface tension. 
Introducing the chemical potential $\mu=-\Delta \vp+\Psi'(\vp)$,
the HSCH system is rewritten as
\begin{equation}
\begin{cases}
\label{CHHS}
\nu(\vp) \u  +\nabla p = \mu \nabla \vp,\\
\mathrm{div}\, \u=0, \\
\partial_t \vp+ \u \cdot \nabla \vp =   \Delta \mu,\\
\mu= - \Delta \vp +  \Psi'(\vp),
\end{cases}
 \quad \text{ in } \Omega\times (0,T).
\end{equation}
The system is associated with the impermeability condition for $\uu$ and homogeneous Neumann boundary conditions for both $\vp$ and $\mu$, and an initial condition. This corresponds to the conditions
\begin{equation}
\label{bdini}
\begin{cases}
\u \cdot \n= \partial_\n \mu=\partial_\n \vp=0, \quad &\text{ on } \partial \Omega \times (0,T),\\
\vp( \cdot,0)=\vp_0, \quad &\text{ in } \Omega,
\end{cases}
\end{equation}
where $\n$ is the unit outward normal vector to the boundary $\partial \Omega$. 

The purpose of this contribution is to study the HSCH system \eqref{CHHS}-\eqref{bdini} under physically grounded assumptions on the viscosity of the mixture $\nu$ and on the free energy density $\Psi$. In the theory of mixtures the viscosity coefficient $\nu$ is an expression of the concentration $\vp$, the viscosities $\nu_1$ and $\nu_2$ of the homogeneous fluids and other parameters, such as the temperature. Particular relations are usually validated through experiments. A typical approximation in the unmatched viscosities case ($\nu_1\neq\nu_2$) can be expressed by the linear combination
\begin{equation}
\label{vis}
\nu(s)= \nu_1 \frac{1+s}{2}+\nu_2 \frac{1-s}{2}, \quad  s \in [-1,1].
\end{equation}   
Throughout our analysis, we will assume that $\nu$ is a smooth and strictly positive function (see (A1)).
From thermodynamics theory the Helmholtz free energy density $\Psi$ is given by $\Psi=\Delta U-\theta \Delta S$, where $\Delta U$ and $\Delta S$ are the variations of internal energy and entropy of the mixture after mixing, and $\theta$ is the constant temperature of the system. In the case of regular or polymer solutions, the entropy of mixing is derived from the Boltzmann equation, while the internal energy depends on the configuration after mixing (see, e.g., \cite{LHJSYK2014}). This leads to the well-known Helmholtz potential   
\begin{equation}
\label{SING}
\Psi(s)=\frac{\theta}{2}\left[ (1+s)\log(1+s)+(1-s)\log(1-s)\right]-\frac{\theta_0}{2} s^2,
 \quad s \in [-1,1],
\end{equation}
where $\theta_0$ is the so-called critical temperature. We consider hereafter the interesting case in which $0<\theta<\theta_0$.
%The logarithmic potential \eqref{SING} is very often approximated by a polynomial potential defined as
%\begin{equation}
%\label{poly}
%\Psi_0(s)= a s^4 -b s^2, \quad \forall \, s \in \mathbb{R},
%\end{equation}
%where the positive constants $a$ and $b$ depends on $\theta$ and $\theta_0$.
The total free energy associated with system \eqref{CHHS} is the Ginzburg-Landau functional
\begin{equation}
\label{energy}
\E(\vp)= \int_\Omega  \frac12 |\nabla \vp|^2
+ \Psi(\vp) \, \d x,
\end{equation}
which accounts for interfacial energy and mixing tendencies of the binary mixture. We observe that any sufficiently regular solution satisfies the energy balance
\begin{equation}
\label{energybalance}
 \E(\vp(t))+ \int_0^t \int_{\Omega} |\nabla \mu|^2 + \nu(\vp)|\uu|^2 \, \d x \, \d \tau =\E(\vp_0).
\end{equation}

\subsection{Physical Background}
A fundamental problem in fluid mechanics concerns the dynamics of two adjoining fluids. 
Complex phenomena already appear in simple experiments when the spatial regions occupied by a single flow is deformed by moving fluid structures, and the interface area decreases its characteristic length scale and even changes its topology. An important example is the motion of one or two fluids between flat plates separated by a narrow gap. This was proposed by Henry S. Hele-Shaw in the seminal experiment \cite{HELESHAW1898} aiming to describe fluid flows in which viscous forces prevail over inertial forces. Under this assumption, the Navier-Stokes equations reduce to a linear relation, the so-called Darcy's law, that reads as 
\begin{equation}
\label{darcy}
\overline{\uu}=- \frac{h^2}{12 \nu} \nabla p.
\end{equation}
Here $\overline{\uu}$ is a two dimensional vector field denoting the average of the velocity over the cell gap $h$, namely
$\overline{u}_i= \frac{1}{h} \int_0^h u_i \, \d x_3$, for $i=1,2$.
Even though the derivation of \eqref{darcy} from the Navier-Stokes equations naturally leads to a two dimensional flow (see, e.g. \cite{GV2006}), the equations \eqref{darcy} have the same form of the Darcy's law studied for saturated flow in porous media in three dimensions.

The theory of diffuse interface for fluid mixtures represents nowadays a successful method to simulate complex systems, being able to capture the main features of the mutual interplay in the motion of two fluids. The key concept is to represent the interfaces as regions with finite thickness, which are described as the level-set of the difference of fluids concentrations $\vp$, whose values are uniform in homogeneous states and have a rapid but smooth variation across the interface. The dynamics of $\vp$ is derived from the mass balance of the mixture assuming a partial mixing at the interface. This leads to the Cahn-Hilliard equation \eqref{CH}, in which the diffusive mass flux is given in term of the derivative of the free energy \eqref{energy}.
The full model consisting of additional equations for the velocity field is derived through an energetic variational procedure. In contrast to the sharp interface method, in which the interface is a time-dependent surface, the main advantage of the diffuse interface formulation is the transformation from a Lagrangian to an Eulerian description, which allows large deformation and topological changes of the interfaces. For this reason, diffuse interface models have been employed in many applications, as witnessed by a vast literature mostly devoted to numerical simulations. When the interface thickness go to zero, the relating free boundary (or sharp interface) problems are formally recovered from the diffuse interface system. We refer the interested reader to the reviews \cite{AMW,Review2005} and the references therein. 

In the diffuse interface theory, the Hele-Shaw-Cahn-Hilliard model \eqref{CHHS}-\eqref{bdini} has been derived as a simplification of the Navier-Stokes-Cahn-Hilliard system (Model H) in \cite{LLG2002,LLG2002-2} and, more recently, in \cite{DGL}. 
In these papers, this model has been applied to investigate pinchoff and reconnection, rising bubbles and fingering instabilities in a Hele-Shaw cell. The HSCH system has been generalized in \cite{HSW2014} for flows in karstic geometric. In these last years, the HSCH model has also had a considerable impact in modeling tumor growths. This system has been coupled with reaction-diffusion equations to take chemotaxis, active transport and nutrients into account. Among the large literature devoted to this subject, we mention \cite{CWSL2014,CLLW2009,LFJCLMWC2010,GLNS2017,GLSS2016}. 

\subsection{Summary of Previous Results}
The mathematical analysis of Hele-Shaw-Cahn-Hilliard system with constant viscosity and logarithmic free energy density has been addressed in \cite{GGW2018}. The authors proved the existence of global weak solutions in dimension two and three. The uniqueness of weak solutions, their global regularity and the instantaneous separation property (from pure concentrations) have been established  in dimension two. Furthermore, the existence of global regular solutions is shown in dimension three providing that the initial condition is close to an energy minimizer. We also mention two results concerning the existence of global weak solutions proved in \cite{DFRSS} and \cite{FLRS2018}. In the former the HSCH system is coupled with a transport equation and a quasi-static reaction-diffusion equation. In the latter a multi-species HSCH model is studied with a mass source $S$ depending linearly on the concentration vector $(\vp_c,\vp_d)$.
In the recent paper \cite{DPGG2018}, the authors considered the nonlocal version of the HSCH system with constant viscosity and logarithmic potential. The global existence and uniqueness of weak and strong solutions are proven in both dimensions two and three. 
Moreover, the instantaneous separation property is established in dimension two. It is worth noting that, in comparison with the literature for the local case,  the analysis in \cite{DPGG2018} provides a first result of global well-posedness in dimension three.

The HSCH system has been studied considering the double-well polynomial approximation $\Psi_0(s)= (s^2-1)^2$ of the logarithmic Helmholtz potential (up to a constant) in previous works.
For periodic boundary conditions, in the unmatched viscosities case  existence and uniqueness of strong solutions in $H^s(\Omega)$, with $s>1+ \frac{d}{2}$, global if $d=2$ and local if $d=3$, have been proved in \cite{WZ13}. The convergence to equilibrium and the global existence for initial data close to energy minimizer if $d=3$ were established in \cite{WW12}. The existence of global weak solutions has been shown in \cite{DGL}. In the matched viscosity case ($\nu(s)\equiv 1$), the existence and uniqueness of strong solutions in $H^2(\Omega)$, global if $d=2$ and local if $d=3$, have been established in \cite{LTZ}. The authors also studied Gevrey regularity and 
exponential stability of a constant state (i.e., the average of total mass for the initial datum) under suitable smallness assumptions. In \cite{Fei2016} and \cite{MR2018}, it is shown the convergence of weak solutions to \eqref{CHHS} to varifold solutions of the associated sharp interface problem.
We mention that some variants of the HSCH system has been investigated in \cite{HWW} and \cite{JWZ} (see also \cite{SW2018} for a related optimal control problem).
%Similarly to the above results, the authors proved the existence of global weak solutions and the existence and uniqueness of strong solutions in $H^2(\Omega)$, global if $d=2$ and local if $d=3$. . We also mention that the variant of the HSCH system in karstic geometry is studied in \cite{HWW}. The authors prove the existence of global weak solution and a weak-strong uniqueness property.
%Moreover, for the analysis of convergence and stability of numerical scheme as well as for numerical simulations for the HSCH we refer the reader to \cite{CCM2006,CJ2014,DFW15,FW12,Han2016,LLG2002,LLG2002-2,
%LCWW2017,W}. 
As already pointed out in \cite{GGW2018} (see also \cite{SW2018}), the uniqueness of weak solutions has not been proven yet in dimension two. Finally, we remark that the main drawback in the analysis with a polynomial-like potential $\Psi_0$ is the lack of physical solutions. More precisely, it is not possible to  guarantee that $\vp$ stays in the meaningful interval $[-1,1]$ for any initial condition.  

%In addition, the authors proved the (local) .  Besides, relationship between the original system (1.1)and the Cahn–Hilliard–Brinkman model (namely, with a regularizing term −uon the left-hand side of the first equation in (1.1)) was investigated in [5]. We also refer to [12]for nonlocal versions of Cahn–Hilliard–Brinkman/Cahn–Hilliard–Hele–Shaw models with regular potential (1.5)and to [9]for the Cahn–Hilliard–Brinkman model with singular potential (1.4)and nonconstant viscosity. It is also worth mentioning that, 

\subsection{Main Results} 
The aim of this contribution is to present a mathematical theory of existence, uniqueness and regularity for the Hele-Shaw-Cahn-Hilliard system with unmatched viscosities and thermodynamically consistent logarithmic free energy density. After discussing the existence of global weak solutions, we prove the following results:
\begin{itemize}
\item Uniqueness criterion for weak solutions in dimension two (Theorem \ref{weak-strong});
\item Existence and uniqueness of global strong solutions in dimension two (Theorem \ref{strong2d});
\item Existence and uniqueness of strong solutions in dimension three, local in time for large data or global in time for small data (Theorem \ref{strong3d}). 
\end{itemize}
We point out that the first two results implies the weak-strong uniqueness property in dimension two, which allows us to improve the global regularity of any weak solution. Furthermore, we give a positive answer to a question related to the HSCH system with regular (polynomial) potential, namely we show
\begin{itemize}
\item Uniqueness of weak solutions in dimension two (Theorem \ref{RU}).
\end{itemize}

The main issue in the analysis of the HSCH system is not merely the strong coupling between the Darcy's law and the Cahn-Hilliard equation given by the capillary forces term $-\mathrm{div}\, (\nabla \vp\otimes \nabla \vp)$, whose regularity determines roughly the properties of $\uu$. Rather the crucial difficulty arises from the combination of this nonlinear term with the non-constant viscosity and the singular nature of the logarithmic potential \eqref{SING}. We note that the latter makes challenging the control of the derivatives of the convex part of the potential $F(s)=\frac{\theta}{2}\left[ (1+s)\log(1+s)+(1-s)\log(1-s)\right]$. Indeed, the different growth of its derivatives when $s$ approach $\pm 1$ translates into the relations
\begin{equation}
\label{growth}
F''(s)\leq \mathrm{e}^{C|F'(s)|} \quad \text{and} \quad |F'''(s)|\leq CF''(s)^2, \quad \forall \, s \in (-1,1),
\end{equation}
which prevent a control $\Psi''(\vp)$ in terms of $L^p$-norm of $\Psi'(\vp)$. 
As a consequence, the difference of solutions $\vp_1-\vp_2$ can only be estimated in the dual space of $H^1(\Omega)$ (cf. \eqref{Idef2} below). In turn, a control of the difference of velocities in $L^2(\Omega)$ is not sufficient due to the term $-\Delta \vp \nabla \vp$ in \eqref{DL}. Another remarkable difficulty due to \eqref{growth} concerns the regularity of the solution. More precisely, the spatial regularity of $\vp$ is at most in $W^{2,p}(\Omega)$, where $p$ depends on the spatial dimension. Thus, further regularity properties compared to $L^2(\Omega \times (0,T))$ for the velocity are a hard task. These issues are overcome in our analysis developing two novel techniques. First, we reformulate the convective term in \eqref{CH} by exploiting the algebraic form of the Darcy's law.   
This method is used in \cite{GGW2018} by noting that $\uu= \P (-\mathrm{div} (\nabla \vp\otimes \nabla \vp))$, where $\P$ is the Leray projection. However, in contrast to the case with matched viscosities studied in \cite{GGW2018}, the presence of the non-constant viscosity gives rise to more complicated terms including the modified pressure $p^\ast$ (see \eqref{I2} below). The key idea in order to avoid a loss of derivative is to rewrite the terms $Z_i$ in \eqref{I2} in such a way that $\nabla \vp$ is the highest order derivative of the solution. This is carried out by exploiting the homogeneous Neumann boundary conditions to cancel all the boundary terms arising from integration by parts. This argument allows us to prove the uniqueness of weak solutions satisfying $\vp\in L^\infty(0,T;W^{1,r}(\Omega))$ for some $r>2$ in dimension two and the uniqueness of strong solutions in dimension three. Second, we show new 	{\it a priori} estimates in order to prove the existence of regular solutions. These are based on a differential equality involving the $L^2$-norm of $\nabla \mu$ and $\uu$, which is combined with elliptic estimates for the Neumann problem with logarithmic nonlinearity and a bound of the vorticity $\mathrm{curl}\, \uu$ for the Darcy's law. In two dimensions, taking advantage of the Br\'{e}zis-Gallouet-Wainger inequality, we demonstrate a logarithmic differential inequality which implies global bounds in time. In three dimensions, the order of the nonlinear terms is supercritical. Since super-quadratic terms arise on the right-hand side of the resulting differential inequality, we infer the local existence of strong solutions for large smooth data.
Nevertheless, exploiting the dissipative mechanims of the system, these super-quadratic terms can be controlled providing that the initial data is suitably small. This entails the existence of global strong solutions and their exponential decay in time for such small data. We point out that our argument simplifies the proof in \cite{GGW2018} based on the \L ojasiewicz-Simon inequality in dimension three.

%The global existence of weak solutions for the HSCH system with logarithmic Helmholtz potential satisfying the physical property 
%$$
%\vp\in L^\infty(\Omega \times (0,T)) \  \text{such that} \
%|\vp(x,t)|\leq 1 \  \text{for almost every} \ (x,t)\in \Omega \times [0,T]
%$$ 
%can be achieved by relying on the a priori estimates deduced from the energy balance \eqref{energybalance} and its dissipative mechanisms. The challenging tasks in the mathematical analysis consist in showing the uniqueness of weak solutions and the global existence of more regular solutions. 
%This implies that the related pressure satisfies a Neumann problem with non-constant coefficients (cf. Remark \ref{equiv} below).  
%However, although the above formulation was sufficient in \cite{GGW2018} to prove uniqueness and regularity of in two dimensions, this strategy does not apply in the case of viscosity depending on the concentration. 
%Despite the above-mentioned difficulties, we are able to reformulate the problem for the difference of two solutions in such way that the velocity disappears. However, by the non-constant viscosity term, the pressure does not disappear and
\medskip

\textbf{Plan of the paper.} In Section \ref{S2} we collect some preliminary results. In Section \ref{S3} we recall the main assumptions and we discuss the existence of weak solutions. Section 
\ref{S4} is devoted to uniqueness results of weak solutions in dimension two. In Section \ref{S5} we study existence and uniqueness of strong solutions and further regularity properties in dimension two. Section \ref{S6} is devoted to the analysis of strong solutions in dimension three. In Section \ref{S7} we provide some remarks and future directions. In Appendix \ref{App} we report some generalized Gronwall lemmas.

%%%%%%%%%%%%%%%%%%%%%%%%%%%%%%%%%%%%%%%%%%%%%%%%%%%%%%%%%%%%%%%%%%%%%%%%%

\section{Mathematical Setting}
\label{S2}
\setcounter{equation}{0}

\subsection{Function spaces}
Let $X$ be a (real) Banach or Hilbert space, whose norm is denoted
by $\|\cdot\|_X$. The space $X'$ indicates the
dual space of $X$ and $\l \cdot,\cdot\r$ denotes the duality product. The vectorial space $X^d$ endowed with the product structure ($d$ is the spatial dimension) is denoted by $\mathbf{X}$ with norm $\|\cdot \|_{X}$. 
In a bounded domain $\Omega \subset \mathbb{R}^d$ with smooth boundary $\partial \Omega$, $W^{k,p}(\Omega)$, $k\in \mathbb{N}$ and
$p\in [1,+\infty]$, are the Sobolev spaces of real measurable functions on $\Omega$.  We
denote by $H^k(\Omega)$ the Hilbert spaces
$W^{k,2}(\Omega)$ and by $\|\cdot\|_{H^k(\Omega)}$ its norm.
In particular, $H=L^2(\Omega)$ with inner product and norm denoted by $(\cdot,\cdot)$ and
$\|\cdot\|$, respectively. The space $V=H^{1}(\Omega)$ is endowed with the norm $\|f\|_V^2=\|\nabla f \|^2+\|f\|^2$.
For every $f\in V'=(H^1(\Omega))'$, we denote by $\overline{f}$ the total mass of $f$ defined by $\overline{f}=\frac{1}{|\Omega|}\l f,1\r$.
We recall the following Poincar\'{e}'s inequality
\begin{equation}
\label{poincare}
\|f-\overline{f}\|\leq C\|\nabla f\|,\quad \forall\,
f\in V,
\end{equation}
where the constant $C$ depends only on $d$ and $\Omega$.
Next, we introduce the Hilbert space of soleinodal function
$\mathbf{H_\sigma}=\lbrace {\uu\in \mathbf{L}^2(\Omega):
\mathrm{div}\, \uu=0 \text{ in }\Omega, \, \u\cdot
\n=0 \text{ on }\ \partial \Omega\rbrace},
$
endowed with the usual norm $\|\cdot\|$.
Let $\mathbb{P}$ be the Helmholtz-Leray orthogonal projection from
$\mathbf{H}$ onto $\mathbf{H_\sigma}$.
It is well known that every vector field $\uu \in \mathbf{H}$
can be uniquely represented as
$
\u= \vv + \nabla p,
$
where $\vv= \P \u \in \mathbf{H_\sigma}$ and $p\in V$ such that $\overline{p}=0$.  
We recall that $\P$ is a bounded
operator from $\mathbf{W}^{k,p}(\Omega)$,  for $1< p<\infty$ and $k\geq 0$, into itself (cf. \cite[Lemma 3.3]{GM}), namely there exists a constant $C>0$ such that
\begin{equation}
\label{O}
\| \P \u\|_{W^{k,p}(\Omega)}\leq C \| \u\|_{W^{k,p}(\Omega)},
\quad \forall\, \u\in \mathbf{W}^{k,p}(\Omega).
\end{equation} 
%On the other hand, if $\u\in \mathbf{W}^{k,p}(\Omega)$, with $1< p<\infty$, then from $\nabla P= \u-\Pi \u$ we see that $P$ can be the unique solution of the Neumann problem
%\begin{equation}
%\label{NPpressure}
%\begin{cases}
%-\Delta P= {\rm div }\, \u, \quad\ \text{ in } \Omega,\\
%\partial_\n P= \u \cdot \n,\qquad \text{ on } \partial \Omega,
%\end{cases}
%\end{equation}
%satisfying $\overline{P}=0$.
%Hence, it follows that $P\in W^{k+1,p}(\Omega)$ by the classical elliptic regularity results for the Neumann problem.
In addition, there exists a constant $C>0$ such that
\begin{equation}
\label{rot}
\| \u\|_{V}\leq
C \left( \| \mathrm{curl}\, \uu \| +\| \u\|\right),
\quad \forall \,  \u \in \mathbf{V}\cap \mathbf{H}_\sigma,
\end{equation}
where $\mathrm{curl}\, \uu$  is the vorticity of $\uu$ defined by
$$
\mathrm{curl}\, \uu= \frac{\partial u_2}{\partial x_1}-\frac{\partial u_1}{\partial x_2}\quad d=2, \quad \mathrm{curl}\, \uu=\Big( \frac{\partial u_3}{\partial x_2}- \frac{\partial u_2}{\partial x_3},\frac{\partial u_1}{\partial x_3}- \frac{\partial u_3}{\partial x_1}, \frac{\partial u_2}{\partial x_1}- \frac{\partial u_1}{\partial x_2}\Big)  \quad d=3.
$$

%Below we will also use the higher-order soleinodal Hilbert space
%$$
%\mathbf{V_\sigma}=\{\u\in \mathbf{V}:
%\text{div\ }\u=0\ \text{in}\ \Omega, \ \ \u=0 \ \text{on}\
% \partial \Omega\}
%$$
%equipped with inner product and norm
%$$
%\l \u , \vv\r_{\mathbf{V}_\sigma}=( \nabla \u, \nabla \vv), \quad
%\| \u\|_{\mathbf{V}_\sigma}=\| \nabla \u\|.
%$$
\smallskip

\subsection{Interpolation and product inequalities.}
\label{interpolations} 
We recall here some well-known interpolation inequalities
in Sobolev spaces which can be found in classical
literature (see e.g. \cite{Brezis-Wainger,Te01}):
\begin{itemize}
\item[$\diamond$] Ladyzhenskaya's  inequalities
\begin{align}
\label{L}
\|f\|_{L^4(\Omega)}
\leq C  \| f\|^{\frac12} \|f\|_{V}^{\frac12}, \quad &&\forall
\, f \in V, \  d=2,\\
\label{L3}
\|f\|_{L^4(\Omega)}
\leq C  \| f\|^{\frac14} \|f\|_{V}^{\frac34}, \quad &&\forall
\, f \in V, \  d=3.
\end{align}
\item[$\diamond$] Agmon's inequalities
\begin{align}
&\|f\|_{L^\infty(\Omega)}
\leq C\|f\|^\frac12\|f\|_{H^2(\Omega)}^\frac12,
\quad &&\forall \, f \in H^2(\Omega), \  d=2,\label{Ad2}\\
&\|f\|_{L^\infty(\Omega)}
 \leq C\|f\|_{V}^\frac12\|f\|_{H^2(\Omega)}^\frac12,
 \quad &&\forall \, f \in H^2(\Omega), \  d=3. \label{Ad3}
\end{align}
\item[$\diamond$] Br\'{e}zis-Gallouet-Wainger inequality
\begin{equation} 
\label{BWd2}
\| f\|_{L^\infty(\Omega)}\leq C 
\| f\|_V 
\log^{\frac12} (e +\|f\|_{W^{1,q}(\Omega)}), \quad \forall \, f \in W^{1,q}(\Omega), \ q>2, \  d=2. 
\end{equation}

\item[$\diamond$] Gagliardo-Nirenberg inequalities
\begin{align}
\label{GN2}
&\| f\|_{L^p(\Omega)}\leq C \| f\|_{L^q(\Omega)}^{1-\theta} \| f\|_{V}^\theta, \quad &&\forall \, f \in V, 1\leq q\leq p<\infty, \ \theta= 1-\frac{q}{p}, \ d=2,\\
\label{GN3}
&\| f\|_{L^\infty(\Omega)}\leq C \| f\|^{1-\theta} \| f\|_{W^{1,q}(\Omega)}^\theta, \quad &&\forall \, f \in W^{1,q}(\Omega), \ q>3, \ \theta= \frac{3q}{5q-6}, \ d=3.
\end{align}

%GAGLIARDO-NIRENBERG
%\begin{equation}
%\|D^jf\|_{L^p(\Omega)}\leq C
%\|f\|_{L^q(\Omega)}^{1-a} \| f\|_{W^{m,r}(\Omega)}^a ,
%\quad \forall \, f\in W^{m,r}(\Omega)\cap L^q(\Omega),\nonumber
%\end{equation}
%where $j, m$ are integers satisfying
%$0\leq j< m$ and $\frac{j}{m}\leq a\leq 1$, and
%$1\leq q, r\leq +\infty$  such that
%$
%\frac{1}{p}-\frac{j}{d}=a\left(\frac{1}{r}-\frac{m}{d}\right)+(1-a)\frac{1}{q}.
%$
%If $1<r<+ \infty$ and $m-j-\frac{n}{r}$ is a nonnegative integer, then the
%above inequality holds only for $\frac{j}{m}\leq a< 1$.
\end{itemize}
We report the following results on the differentiation of a product in
Sobolev spaces ($d=2,3$)
\begin{align}
\label{prodV}
&\| fg\|_V\leq C \big( \|f\|_{V}\| g\|_{L^{\infty}(\Omega)}+ \| f\|_{L^{\infty}(\Omega)}
\| g\|_V \big), \quad &&\forall \, f, g \in V\cap L^\infty(\Omega),\\
%\label{prodV2}
%&\| fg\|_V
%\leq C \big(\| f \|_{W^{1,4}(\Omega)}\| g \|_{L^{4}(\Omega)}
%+\| f\|_{L^{\infty}(\Omega)} \|g\|_V
%\Big), \quad \forall \, f\in W^{1,4}(\Omega),\ g \in V,\\
\label{prodH2}
&\| fg \|_{H^2(\Omega)} \leq C \big( \|f\|_{H^2(\Omega)}\| g\|_{L^{\infty}(\Omega)}+ \| f\|_{L^{\infty}(\Omega)}
\| g\|_{H^2(\Omega)} \big), \quad &&\forall \, f, g \in H^2(\Omega).
\end{align}
\smallskip

\subsection{Neumann's problems for Laplace operator}
We report here some existence results and elliptic estimates regarding homogeneous Neumann problems with constant and non-constant coefficients.
\smallskip

\noindent
\textbf{Case I: Constant coefficients.}
Let us introduce the linear spaces
$$
V_0=\{ u \in V:\ \overline{u}=0\},\quad
L^2_0=\{ u \in H:\ \overline{u}=0\}, \quad
V_0'= \{ u \in V':\ \overline{u}=0 \}.
$$
We consider the linear operator
$A\in \mathcal{L}(V_0,V_0')$ defined by
$
\l A u,v \r= (\nabla u , \nabla v)$,
for all $u,v  \in V_0$.
The operator $A$ is positive, self-adjoint and has compact inverse denoted by $\An$. For $f\in V_0'$, $u=\An f$ is the unique
weak solution of the homogeneous Neumann problem 
$$
\begin{cases}
-\Delta u=f, \quad \text{in} \ \Omega,\\
\partial_\n u=0, \quad \ \  \text{on}\ \partial \Omega,
\end{cases}
$$
namely $\l A u, v \r= \l f,v \r$, for all $v\in V_0$.  
It follows that
\begin{equation}
\l  f, \An g\r =\l \An f, g\r = \int_{\Omega} \nabla(\An f)
\cdot \nabla (\An g) \, \d x, \quad \forall \, f,g \in V_0'.\label{propN2}
\end{equation}
For any $f\in V_0'$, we define $\|f\|_{V_0'}=\|\nabla \An f\|$, which is a norm on $V_0'$ 
equivalent to the natural norm. 
Moreover, the operator $A$ can be seen as an unbounded operator on $L_0^2$ with domain $D(A)= \lbrace u\in V_0\cap H^2(\Omega): \partial_\n u=0 \text{ on } \partial \Omega \rbrace$.
Finally, we report the following Hilbert interpolation inequality and elliptic estimates for the Neumann problem:
\begin{align}
&\|f\| \leq \|f\|_{V_0'}^{\frac12} \| \nabla f\|^{\frac12},
 &&\forall\, f \in V_0,\label{I}\\
&\|\nabla \An f\|_{H^{k}(\Omega)} \leq C \|f\|_{H^{k-1}(\Omega)},
 &&\forall\, f\in H^{k-1}(\Omega)\cap L^2_0,\quad k\in \mathbb{N},\label{N}\\
&\| \An f\|_{W^{k+2,p}(\Omega)}\leq C \| f \|_{W^{k,p}(\Omega)}, 
&&\forall \, f \in W^{k,p}(\Omega)\cap L^2_0,\quad k\in \mathbb{N}, \ 1<p<\infty.
\label{NR}
\end{align}
\smallskip

\noindent
\textbf{Case II: Non-constant coefficients.}
We consider the homogeneous Neumann problem with a non-constant coefficient $K$ depending on a given measurable function $\theta$. This reads as follows
\begin{equation}
\label{NP-2}
\begin{cases}
- \mathrm{div}\, ( K(\theta) \nabla u) =f,\quad &\text{ in }\Omega,\\
\partial_\n u=0, \quad &\text{ on }\partial \Omega.
\end{cases}
\end{equation}

We have the following result.

\begin{theorem}
\label{NP-nc}
Let $d=2,3$. Assume that $K \in \mathcal{C}^1(\mathbb{R})$ such that 
$0<\underline{K}\leq K(s)\leq \overline{K}$ for all $s \in \mathbb{R}$.
Then, we have the following:
\smallskip

\begin{itemize}
\item[$\bullet$] Let $f\in V_0'$. There exists a unique $u\in V_0$ such that
$(K(\theta)\nabla u,\nabla v)=\l f,v\r$, for all $v\in V_0$.
\smallskip

\item[$\bullet$] Let $\theta \in W^{1,r}(\Omega)$, with $d<r< \infty$, and $f\in L^2_0$. Then, $u\in H^2(\Omega)$ and $\partial_\n u=0$ on $\partial \Omega$. Moreover, there exists a positive increasing function $Q$ depending on $\underline{K}$, $\overline{K}$ and $r$ such that, 
$$
\| u\|_{H^2(\Omega)}\leq Q(R) \| f\|,
$$
where $\| \theta\|_{W^{1,r}(\Omega)}\leq R$.
In particular, there exists a constant $C>0$ such that
\begin{align*}
&\| u\|_{H^2(\Omega)}\leq C(1+\| \theta\|_{W^{1,4}(\Omega)})^2 \| f\|,  \quad d=2,\\ 
&\| u\|_{H^2(\Omega)}\leq C(1+\| \theta\|_{W^{1,\infty}(\Omega)})\| f\|, \quad d=2,3.
\end{align*}
\end{itemize}
\end{theorem}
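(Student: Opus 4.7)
My plan is to treat the two bullets separately. For the first, I would apply the Lax--Milgram lemma on $V_0$ to the bilinear form $a(u,v)=\int_\Omega K(\theta)\nabla u\cdot\nabla v\,\d x$. Continuity is immediate from $K(\theta)\leq \overline{K}$, and coercivity follows from $K(\theta)\geq \underline{K}>0$ combined with the Poincar\'{e} inequality \eqref{poincare} applied on $V_0$. This produces the unique $u\in V_0$ satisfying $(K(\theta)\nabla u,\nabla v)=\l f,v\r$ for all $v\in V_0$, together with the energy bound $\|\nabla u\|\leq C\underline{K}^{-1}\|f\|_{V_0'}$.

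For the second bullet the starting point is the formal expansion of the divergence,
\begin{equation*}
-\Delta u=\frac{1}{K(\theta)}\bigl(f+K'(\theta)\nabla\theta\cdot\nabla u\bigr).
\end{equation*}
Since $r>d$ the Sobolev embedding yields $\theta\in L^\infty(\Omega)$ with $\|\theta\|_{L^\infty(\Omega)}\leq CR$, so $K'(\theta)$ and $1/K(\theta)$ are uniformly bounded in terms of $R$. Taking $L^2$-norms, applying H\"{o}lder's inequality with exponents $\bigl(r,2r/(r-2)\bigr)$ to the nonlinear term, and using the Gagliardo--Nirenberg estimate
\begin{equation*}
\|\nabla u\|_{L^{2r/(r-2)}(\Omega)}\leq C\|\nabla u\|^{1-d/r}\|u\|_{H^2(\Omega)}^{d/r},
\end{equation*}
followed by the Neumann elliptic regularity $\|u\|_{H^2(\Omega)}\leq C(\|\Delta u\|+\|u\|)$ (cf. \eqref{NR} with $k=0$, $p=2$) and the $V_0$-energy bound from the first bullet, I obtain
\begin{equation*}
\|u\|_{H^2(\Omega)}\leq C\|f\|+CR\,\|f\|^{1-d/r}\|u\|_{H^2(\Omega)}^{d/r}.
\end{equation*}
Young's inequality with exponents $\bigl(r/d,\,r/(r-d)\bigr)$ absorbs the right-hand factor $\|u\|_{H^2(\Omega)}^{d/r}$ and produces $\|u\|_{H^2(\Omega)}\leq C\bigl(1+R^{r/(r-d)}\bigr)\|f\|$. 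Specialising to $d=2$ and $r=4$ gives the exponent $2$ on $\|\theta\|_{W^{1,4}(\Omega)}$, while the formal limit $r\to\infty$ (replacing $\|\nabla\theta\|_{L^r}$ by $\|\nabla\theta\|_{L^\infty}$) recovers the linear bound valid in both dimensions.

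To justify this formal computation rigorously, I would approximate $\theta$ in $W^{1,r}(\Omega)$ by a sequence $\theta_n\in C^\infty(\overline{\Omega})$, solve the corresponding smooth-coefficient Neumann problem via the first bullet to obtain $u_n\in V_0$, and invoke classical elliptic regularity (applicable since $K(\theta_n)$ is smooth with strictly positive lower bound) to conclude $u_n\in H^2(\Omega)\cap D(A)$ with $\partial_\n u_n=0$ on $\partial\Omega$. The uniform estimate then transfers to the weak limit in $H^2(\Omega)$, which by uniqueness in the first bullet coincides with $u$, and weak lower semicontinuity preserves the quantitative bound; the homogeneous Neumann trace survives the passage by continuity of the normal trace on $H^2$. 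The main obstacle is the nonlinear feedback between $\nabla u$ and $\nabla\theta$: closing the $H^2$-estimate requires an interpolation exponent $\alpha=d/r<1$, which is precisely why the hypothesis $r>d$ appears, and calibrating the Young exponents so that the power of $R$ emerges in the form $r/(r-d)$ is the delicate piece of bookkeeping. Everything else is a standard Lax--Milgram plus bootstrap argument, robust once $\alpha<1$ is secured.
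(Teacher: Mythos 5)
Your argument is correct, and it closes the key $H^2$ estimate by a different mechanism than the paper. Both proofs start from the same reformulation $-\Delta u=\frac{1}{K(\theta)}\bigl(f+K'(\theta)\nabla\theta\cdot\nabla u\bigr)$ (the paper reaches it weakly via the test function $v=\frac{w}{K(\theta)}-\overline{\frac{w}{K(\theta)}}$, you via the pointwise chain rule on smooth approximations $\theta_n$), but they diverge in how the self-referential term $\nabla\theta\cdot\nabla u$ is handled. The paper runs a finite bootstrap in the integrability exponent: the right-hand side lies in $L^{s_0}$ with $\frac{1}{s_0}=\frac12+\frac1r$, the $L^p$ elliptic theory \eqref{NR} upgrades $u$ to $W^{2,s_0}$, hence $\nabla u$ to a better Lebesgue space, and after finitely many iterations one lands in $L^2$; this never puts $\|u\|_{H^2(\Omega)}$ on the right-hand side, at the cost of an implicit constant $Q(R)$. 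You instead estimate the right-hand side in $L^2$ at once, paying with the factor $\|u\|_{H^2(\Omega)}^{d/r}$ from Gagliardo--Nirenberg, and absorb it by Young's inequality --- which is exactly where the hypothesis $r>d$ (i.e.\ $\alpha=d/r<1$) is consumed. Your route is shorter and yields the explicit rate $C(1+R^{r/(r-d)})\|f\|$, which correctly specializes to the quadratic bound for $d=2$, $r=4$ and the linear bound for $r=\infty$; the approximation-and-passage-to-the-limit scaffolding you describe (classical regularity for smooth coefficients to legitimize the a priori computation, weak $H^2$ compactness, uniqueness from the first bullet, weak continuity of the normal trace) is the standard and correct way to make the formal estimate rigorous. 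The only shared imprecision with the paper is that both arguments also use a bound on $|K'|$ over the range of $\theta$ (controlled by $R$ through $W^{1,r}\hookrightarrow L^\infty$), a dependence the statement of the theorem leaves implicit.
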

\begin{proof}
The first part is standard. To prove the estimate in $H^2(\Omega)$, 
we follow the argument in \cite[Lemma 4]{A}. We take $v=\frac{w}{K(\theta)}- \overline{\frac{w}{K(\theta)}}$ as test function, with $w\in V$. Note that $v\in V$ since $\nabla \vp\in L^r(\Omega)$ with $r>d$. Hence, for $f\in L^2_0$, we obtain
$$
(\nabla u,\nabla w)= \Big( \frac{K'(\theta)}{K(\theta)} \nabla u \cdot \nabla \theta+ \frac{f}{K(\theta)}, w \Big), \quad \forall \, w \in V.
$$
Since $\| \nabla u\| \leq C \| f\|$ by the first part, we observe that
$$
\Big\| \frac{K'(\theta)}{K(\theta)} \nabla u \cdot \nabla \theta+ \frac{f}{K(\theta)} \Big\|_{L^s(\Omega)}\leq C(1+\| \nabla \theta\|_{L^r(\Omega)})\| f\|,\quad s=\min \lbrace  s_0, 2 \rbrace, \ \text{where} \ \frac{1}{s_0}=\frac12 +\frac{1}{r}.
$$
By the $L^p$-regularity of the Neumann problem with constant coefficients \eqref{NR}, we deduce that $u\in W^{2,s}(\Omega)$. If $s=2$, the proof is complete. Otherwise,  this implies that $u \in W^{1,p}(\Omega)$, where $\frac{1}{p}=\frac{1}{s}-\frac{1}{d}$ and $\| u\|_{W^{1,p}(\Omega)}\leq C(1+\| \nabla \theta\|_{L^r(\Omega)})\| f\|$. Since $p>2$, we can exploit the bound in $W^{1,p}(\Omega)$ to improve the value of $s$. By a finite number of iterations, we eventually find $s=2$ and the bound in $\| u\|_{H^2(\Omega)}\leq Q(R) \| f\|$. The particular case $r=\infty$ can be directly obtained by the above estimated since $s_0=2$, whereas for $r=4$ two iterations of the above argument are sufficient.
\end{proof}
\smallskip

\subsection{Neumann's problem for Laplace operator with logarithmic potential.}
We introduce the homogeneous Neumann problem with a logarithmic convex nonlinear term
\begin{equation}
\label{ELL}
\begin{cases}
-\Delta u+F'(u)=f,\quad &\text{ in }\Omega,\\
\partial_\n u=0, \quad &\text{ on }\partial \Omega.
\end{cases}
\end{equation}
We assume that $F: [-1,1] \mapsto \mathbb{R}$ satisfies $F\in \mathcal{C}([-1,1]) \cap \mathcal{C}^{2}(-1,1)$,
\begin{equation}
\label{F-ass}
\lim_{s\rightarrow -1^+}F^{\prime }(s)=-\infty, \quad
\lim_{s\rightarrow 1^-}F^{\prime }(s)=+\infty, \quad F^{\prime \prime }(s)\geq \theta>0\quad \forall \, s\in (-1,1).
\end{equation}

We now report some existence results and elliptic estimates whose proofs can be found in \cite{A,CG,GMT2018}.

\begin{theorem}
\label{ell2}
Assume that $F$ satisfies the above assumptions.
\smallskip

\begin{itemize}
\item[$\bullet$]
Let $d=2,3$ and $f\in H$. Then, there exists a unique $u\in H^2(\Omega)$ satisfying  $F'(u)\in H$ such that
$-\Delta u+F'(u)=f$ almost everywhere in $\Omega$ and $\partial_{\n} u=0$ almost everywhere on $\partial \Omega$. Moreover, there exists a constant $C>0$ such that
$$
\| u\|_{H^2(\Omega)}+\| F'(u)\|\leq C \big( 1+ \| f\|\big).
$$
In addition, assuming that 
$f_k\rightarrow f$ in $H$, it follows that 
$ u_k\rightarrow u$ in $V$, where $u_k$ and $u$ are the solutions to \eqref{ELL} corresponding to $f_k$ and $f$, respectively.
\smallskip

\item[$\bullet$]
Let $d=2,3$ and $f\in L^p(\Omega)$, where $2\leq p\leq \infty$.
Then, $F'(u)\in L^p(\Omega)$ and
$$
\|F'(u)\|_{L^p(\Omega)}\leq \| f\|_{L^p(\Omega)}.
$$
\item[$\bullet$] Let $d=2,3$ and $f\in V$. 
Then, $u\in W^{2,p}(\Omega)$ where $p=6$ if $d=3$ and for any $p\geq 2$ if $d=2$. We have the estimate
$$
\| \Delta u\|\leq \| \nabla u\|^{\frac12} \| \nabla f\|^{\frac12}.
$$
Moreover, there exists a constant $C>0$, depending on $p$, such that
$$
\|u \|_{W^{2,p}(\Omega)}+ \|F'(u)\|_{L^p(\Omega)}
\leq C \big( 1+\| f\|_{V}\big),
$$
with $p$ as above.
\smallskip

\item[$\bullet$] Let $d=2$ and $f \in V$. Assume that $F$
satisfies
$$
F''(s)\leq e^{C|F'(s)|+C}, \quad \forall \, s \in (-1,1),
$$
for some positive constant $C$.
Then, for any $p\geq 1$, $F''(u)\in L^p(\Omega)$. In addition, there exists a constant $C>0$ (depending on $p$) such that
$$
\| F''(u)\|_{L^p(\Omega)}\leq C\big(1+e^{C\|f\|_V^2}\big).
$$
\end{itemize}
\end{theorem}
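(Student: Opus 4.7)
The plan is to exploit the estimate $\|F'(u)\|_{L^p(\Omega)}\le\|f\|_{L^p(\Omega)}$ from the second bullet of this theorem together with the sharp Moser--Trudinger growth of the two-dimensional Sobolev embedding $V\hookrightarrow L^p(\Omega)$, so as to obtain exponential integrability of $F'(u)$; the growth hypothesis on $F''$ will then deliver the claim directly.

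First, in dimension two the embedding has the sharp form
\[
\|h\|_{L^p(\Omega)}\le C_0\sqrt{p}\,\|h\|_V, \qquad \forall\, h\in V,\ p\ge 2,
\]
a consequence of the Moser--Trudinger inequality: from $\int_\Omega \e^{\alpha (h-\overline{h})^2/\|h\|_V^2}\,\d x\le C$ for $\alpha$ sufficiently small one reads off the $L^{2k}$-norms with $\sqrt{k}$-growth, and general $p$ follows by interpolation, the mean being absorbed via $|\overline{h}||\Omega|^{1/p}\le C\|h\|_V$. Combined with the $L^p$-estimate of the second bullet (applied with $f\in V\subset L^p(\Omega)$ since $d=2$), this yields, for every $p\ge 2$,
\[
\|F'(u)\|_{L^p(\Omega)}\le C_0\sqrt{p}\,\|f\|_V.
\]

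Next I would fix $\alpha>0$ and convert these $L^p$-bounds into exponential integrability via the Taylor series. Using Stirling's bound $k!\ge (k/\e)^k$,
\[
\int_\Omega \e^{\alpha|F'(u)|}\,\d x \le |\Omega|+\sum_{k\ge 1}\frac{\alpha^k}{k!}\|F'(u)\|_{L^k(\Omega)}^k \le |\Omega|+\sum_{k\ge 1}\left(\frac{C_0\,\e\,\alpha\,\|f\|_V}{\sqrt{k}}\right)^k.
\]
Setting $a=C_0 \e\,\alpha\|f\|_V$ and splitting the sum at $k_\ast=\lceil 2a^2\rceil$, the tail $k>k_\ast$ satisfies $a/\sqrt{k}<1/\sqrt{2}$ and is controlled by a convergent geometric series, while the head is bounded by $k_\ast$ times its maximum term (located near $k\sim a^2/\e$ with value $\exp(a^2/(2\e))$). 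Hence
\[
\int_\Omega \e^{\alpha|F'(u)|}\,\d x \le C_1\exp\!\big(c_1\alpha^2\|f\|_V^2\big),
\]
for constants $c_1,C_1>0$ depending only on $\Omega$.

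Finally, I would apply the pointwise growth $F''(s)\le \e^{C|F'(s)|+C}$ to obtain $F''(u)^p\le \e^{Cp}\e^{Cp|F'(u)|}$; integrating and inserting $\alpha=Cp$ in the previous display gives
\[
\|F''(u)\|_{L^p(\Omega)}^p \le C_1\,\e^{Cp}\exp\!\big(c_1 C^2 p^2\|f\|_V^2\big).
\]
Taking $p$-th roots and absorbing every $p$-dependent factor into a fresh constant $C=C(p)$ produces the announced estimate $\|F''(u)\|_{L^p(\Omega)}\le C\big(1+\e^{C\|f\|_V^2}\big)$. The decisive step, and only real obstacle, is the Moser--Trudinger bridge in the middle paragraph: the combinatorial balance between the $\sqrt{p}$-growth of the $L^p$-norms and the factorial in the Taylor series, via Stirling, is precisely what converts the polynomial-in-$p$ control of $F'(u)$ into the exponential-in-$\|f\|_V^2$ control of $F''(u)$.
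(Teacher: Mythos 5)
The paper does not actually prove Theorem \ref{ell2}: it is quoted with a pointer to the references \cite{A,CG,GMT2018}, so there is no internal proof to compare against. Your argument addresses only the fourth bullet, and for that bullet it is correct and is essentially the mechanism used in those references: the chain $\|F'(u)\|_{L^p(\Omega)}\le\|f\|_{L^p(\Omega)}\le C\sqrt{p}\,\|f\|_V$ (the $\sqrt{p}$ coming from the Moser--Trudinger sharpening of the two-dimensional embedding), followed by summation of the exponential series with Stirling's bound to obtain $\int_\Omega \mathrm{e}^{\alpha|F'(u)|}\,\d x\le C\,\mathrm{e}^{c\alpha^2\|f\|_V^2}$, and then the pointwise hypothesis $F''(s)\le \mathrm{e}^{C|F'(s)|+C}$ with $\alpha=Cp$. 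The bookkeeping is right: the maximizing index of $(a/\sqrt{k})^k$ is $k\sim a^2/\mathrm{e}$ with value $\mathrm{e}^{a^2/(2\mathrm{e})}$, the tail beyond $k\ge 2a^2$ is geometric, the $k=1$ term only needs a H\"older adjustment since the second bullet is stated for $p\ge 2$, and taking $p$-th roots at the end correctly leaves a $p$-dependent constant in the exponent, which is all the statement requires.

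The gap is one of scope rather than of substance: the statement comprises four bullets, and your proposal establishes only the last one while invoking the second as a black box. The existence, uniqueness and $H^2$ theory of the first bullet, the $L^p$ bound $\|F'(u)\|_{L^p(\Omega)}\le\|f\|_{L^p(\Omega)}$ of the second (obtained by testing the equation with $|F'(u)|^{p-2}F'(u)$ for regularized potentials and using the monotonicity $F''\ge\theta>0$ --- precisely the computation the paper performs for $F_\varepsilon$ in Step 5 of the proof of Theorem \ref{strong2d}), and the $W^{2,p}$ estimates together with the interpolation bound $\|\Delta u\|\le\|\nabla u\|^{1/2}\|\nabla f\|^{1/2}$ of the third bullet all remain unproven. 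If the target is the full theorem these parts must be supplied, or explicitly delegated to the cited references as the paper itself does; as it stands, your argument is a correct proof of the final assertion conditional on the earlier ones.
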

\smallskip

\noindent
\textbf{Notation.} 
Throughout the paper, if it is not otherwise stated,
we indicate by $C$ a generic positive
constant depending only on the domain and on structural quantities. The constant $C$ may vary from line to line and
even within the same line. Any further dependence will be explicitly pointed out if necessary.

%%%%%%%%%%%%%%%%%%%%%%%%%%%%%%%%%%%%%%%%%%%%%%%%%%%%%%%%%%

\section{Assumptions and Existence of Weak Solutions}
\label{S3}
\setcounter{equation}{0}

\noindent
In this paper we address the well-posedness for the HSCH system for a general class of viscosity functions and singular free energy densities. In the sequel we will require the following set of assumptions.

\begin{itemize}
\item[(A1)] The viscosity coefficient $\nu=\nu(s)$
belongs to $\mathcal{C}^2(\mathbb{R})$ and satisfies
\begin{equation}
\label{ipo-nu}
0< \nu_\ast\leq \nu(s)\leq \nu^\ast, \quad \forall \, s\in \mathbb{R}.
\end{equation}

\item[(A2)] The free energy density $\Psi$ can be decomposed into the form
\begin{equation}
\Psi(s)=F(s)-\frac{\theta_0}{2}s^2, \quad \forall \, s \in [-1,1]. \label{psiform}
\end{equation}
The function
$F: [-1,1] \mapsto \mathbb{R}$ satisfies $F\in \mathcal{C}([-1,1]) \cap \mathcal{C}^{4}(-1,1)$,
\begin{align*}
\lim_{s\rightarrow -1^+}&F^{\prime }(s)=-\infty, \quad
\lim_{s\rightarrow 1^-}F^{\prime }(s)=+\infty, \quad F^{\prime \prime }(s)\geq \theta>0\quad \forall \, s\in (-1,1),
\end{align*}
where the positive constants $\theta_0$, $\theta$ satisfy
$\theta_0-\theta:=\alpha>0$.
In addition, there exists $\kappa\in(0,1)$ such that
$$
F^{(3)}(s)s\geq 0, \quad F^{(4)}(s)>0, \quad \forall \, s \in (-1,-1+\kappa] \cup [1-\kappa,1). 
$$
Without loss of generality, we assume $F(0)=F'(0)=0$ and we make the extension that $F(s)=+ \infty$, for all $|s|>1$.

\end{itemize}

In addition to (A2), we will possibly assume the following properties for the free energy density:

\begin{itemize}
\item[(A3)] The second derivative $F''$ is convex and there exists a constant $C>0$ such that
$$
F''(s)\leq C \mathrm{e}^{C|F'(s)|}, \quad \forall \, s \in (-1,1).
$$

\item[(A4)] The potential $\Psi$ has two symmetric minima $\pm \beta$ in $[-1,1]$ such that $\Psi(-\beta)=\Psi(\beta)<0.$
\end{itemize}

\begin{remark}
\label{remarkF}
Assumptions (A2), (A3) and (A4) are motivated by the
logarithmic double-well potential \eqref{SING} with
$F(s)=\frac{\theta}{2}\left[ (1+s)\log(1+s)+(1-s)\log(1-s)\right]$,
for $s \in [-1,1]$. In particular, we observe that there exist $\pm \beta$ which are solutions of the equation $\Psi'(s)=0$ in $[-1,1]$, namely
$$
\frac{\theta}{2} \log\Big(\frac{1+\beta}{1-\beta}\Big)= \theta_0 \beta.
$$
%The case $\theta_0-\theta\leq 0$ is easier, since in this case the potential $\Psi$  is already convex and we can simply consider $\Psi$ without the decomposition \eqref{psiform}.
\end{remark}
\smallskip

The first result concerns the existence of global weak solutions in both two and three dimensions.

\begin{theorem}
\label{weak-theorem}
Let $d=2,3$ and $T>0$. Assume that (A1)-(A2) hold.  Given $\vp_0 \in V\cap L^\infty(\Omega)$ such that $\|\vp_0\|_{L^\infty(\Omega)}\leq 1$ and $\overline{\vp}_0\in(-1,1)$,
there exists at least one weak solution 
$(\u, p, \vp)$ to problem \eqref{CHHS}-\eqref{bdini} on $[0,T]$
in the following sense:
\smallskip

\begin{itemize}
\item[$\bullet$] The weak solution $(\uu, p, \vp)$ fulfils the regularity
\begin{align}
&\uu\in L^2(0,T;\mathbf{H}_\sigma),\quad p \in L^q(0,T; V_0),\\
\label{reg-vp-weak}
&\vp \in  \mathcal{C}([0,T],V)\cap L^4(0,T;H^2(\Omega))\cap L^2(0,T;W^{2,p}(\Omega))\cap H^1(0,T;V'),\\
\label{reg-vp-inf}
&\vp \in L^{\infty}(\Omega \times (0,T)) \text{ with } |\vp(x,t)|<1 \text{ a.e. }(x,t) \in \Omega\times(0,T),\\
&\Psi'(\vp)\in L^2(0,T;L^p(\Omega)),
\end{align}
where 
%$s=\frac65$ if $d=3$ or any $s\in[1,\frac43)$ if $d=2$,
$q=\frac85$ if $d=3$ or any $q\in [1,2)$ if $d=2$, 
$p=6$ if $d=3$ or any $2\leq p < \infty$ if $d=2$.
\smallskip

\item[$\bullet$] The weak solution $(\uu,p,\vp)$ satisfies
 \begin{align}
\label{weak1}
&\nu(\vp)\uu+ \nabla p = \mu \nabla \vp\quad &&\text{ a.e. }(x,t) \in \Omega\times(0,T),\\
\label{weak2}
&\l\partial_t \vp, v\r + ( \uu \cdot \nabla \vp,v )+  (\nabla \mu,\nabla v)=0,
\quad  &&\forall \, v\in V, \ \text{  a.e. } t\in(0,T),
\end{align}
where $\mu=-\Delta \vp+\Psi'(\vp) \in L^2(0,T;V)$. Moreover, $\partial_{\n} \vp=0$ almost everywhere on $\partial \Omega\times(0,T)$ and $\vp(\cdot,0)=\vp_0$ in $\Omega$.
\smallskip

\item[$\bullet$] The weak solution $(\uu,p,\vp)$ fulfils the energy identity
\begin{align}
\ddt \mathcal{E}(\vp(t))+ \|\sqrt{\nu(\vp)}\u(t)\|^2+ \|\nabla \mu(t)\|^2=0,\quad \text{for a.e.\ } t\in (0,T),\label{BEL}
\end{align}
and the mass conservation 
\begin{align}
\int_\Omega \vp(t) \, \d x
=\int_\Omega \vp_0 \, \d x,\quad \forall\, t\in [0,T].\label{mass}
\end{align}
\end{itemize}
\end{theorem}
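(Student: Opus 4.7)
The plan is to construct weak solutions through a double approximation: a truncation $F_\varepsilon$ of the singular convex part, followed by a Faedo-Galerkin scheme for the concentration. First, I replace $F$ by a family $F_\varepsilon \in \mathcal{C}^2(\mathbb{R})$ that agrees with $F$ on $[-1+\varepsilon,1-\varepsilon]$, extends with quadratic growth outside, and preserves $F_\varepsilon''\geq\theta$; set $\Psi_\varepsilon = F_\varepsilon - \tfrac{\theta_0}{2}s^2$. For fixed $\varepsilon$, I discretize $\vp$ in the Neumann eigenbasis $\{w_j\}$ of $-\Delta$, writing $\vp_n = \sum_{j=1}^n c_j(t) w_j$. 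Given $\vp_n$ with $\mu_{n,\varepsilon} = -\Delta\vp_n + \Psi_\varepsilon'(\vp_n)$, the velocity-pressure pair is recovered by taking the divergence of \eqref{weak1} and using $\mathrm{div}\,\uu_n = 0$, which produces the non-constant coefficient Neumann problem
\[
-\mathrm{div}\bigl(\nu(\vp_n)^{-1}\nabla p_n\bigr) = -\mathrm{div}\bigl(\nu(\vp_n)^{-1}\mu_{n,\varepsilon}\nabla\vp_n\bigr), \qquad \partial_\n p_n = 0 \text{ on } \partial\Omega,
\]
solvable by Theorem \ref{NP-nc}, so that $\uu_n := \nu(\vp_n)^{-1}(\mu_{n,\varepsilon}\nabla\vp_n - \nabla p_n) \in \mathbf{H_\sigma}$. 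Substituting into the projected Cahn-Hilliard equation yields a nonlinear ODE for $(c_j)$, locally solvable by Cauchy-Lipschitz.

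Testing the projected equation with $\mu_{n,\varepsilon}$ and Darcy's law with $\uu_n$ reproduces the energy identity of the regularized system, giving uniform bounds of $\vp_n$ in $L^\infty(0,T;V)$, of $\uu_n$ in $L^2(0,T;\mathbf{H_\sigma})$, and of $\nabla\mu_{n,\varepsilon}$ in $L^2(0,T;\mathbf{H})$. The spatial average of $\mu_{n,\varepsilon}$ is controlled via the standard convexity bound $\|F_\varepsilon'(\vp_n)\|_{L^1(\Omega)}\leq C\bigl(F_\varepsilon'(\vp_n),\vp_n-\overline{\vp}_n\bigr) + C$ combined with mass conservation $|\overline{\vp}_n|\leq|\overline{\vp}_0|<1$. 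Elliptic regularity from Theorem \ref{ell2} applied to $-\Delta\vp_n + F_\varepsilon'(\vp_n) = \mu_{n,\varepsilon} + \theta_0\vp_n$ (uniformly in $\varepsilon$) upgrades these to $\vp_n \in L^2(0,T;H^2(\Omega)\cap W^{2,p}(\Omega))$ and $\Psi_\varepsilon'(\vp_n) \in L^2(0,T;L^p(\Omega))$ with the stated $p$, while $\partial_t\vp_n \in L^2(0,T;V')$ follows from the equation itself after bounding the convective term via \eqref{O}.

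By Aubin-Lions, up to subsequences, $\vp_n \to \vp_\varepsilon$ strongly in $L^2(0,T;V)\cap \mathcal{C}([0,T];H)$ and a.e., while $\uu_n\rightharpoonup \uu_\varepsilon$ in $L^2(0,T;\mathbf{H_\sigma})$. Strong convergence of $\nu(\vp_n)\to\nu(\vp_\varepsilon)$ in every $L^q$, combined with strong $\nabla\vp_n\to\nabla\vp_\varepsilon$ in $L^2(0,T;\mathbf{H})$ (from the $H^2$-bound and interpolation), allows the passage to the limit in Darcy's law and in the convective term $\uu_n\cdot\nabla\vp_n$. The subsequent passage $\varepsilon\to 0$ is analogous. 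The separation property $|\vp(x,t)|<1$ almost everywhere is a consequence of the uniform $L^2(0,T;L^p(\Omega))$-bound on $\Psi_\varepsilon'(\vp_\varepsilon)$ together with the divergence of $F'(s)$ at $s=\pm 1$: if $\{|\vp|=1\}$ had positive measure, Fatou's lemma applied to $F_\varepsilon'(\vp_\varepsilon)$ under a.e.\ convergence would contradict this bound, whence $F_\varepsilon'(\vp_\varepsilon)\to F'(\vp)$ in $L^2(0,T;L^p)$ closing the limit. The pressure is then recovered from \eqref{weak1} by de Rham's theorem applied to the curl-free field $\mu\nabla\vp - \nu(\vp)\uu$, normalized by $\overline{p}=0$, and its $L^q(0,T;V_0)$-integrability is inherited from the non-constant-coefficient Neumann problem with data in the appropriate Lebesgue class, the index $q$ being dictated by the spatial dimension via Sobolev embeddings.

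The main obstacle is the joint handling of the singular potential, the non-constant viscosity, and the quadratic Korteweg force $\mu\nabla\vp$: the Galerkin velocity is well-defined only through Theorem \ref{NP-nc}, and passing to the limit in $\mu\nabla\vp$ requires strong $V$-convergence of $\vp$, which in turn rests on elliptic regularity for the Neumann problem with singular nonlinearity (Theorem \ref{ell2}) being stable under the regularization. The separation property is the pivotal step, since only it legitimizes identifying $F'(\vp)$ in the limit and produces the claimed regularity \eqref{reg-vp-inf}.
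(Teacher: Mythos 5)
Your outline follows essentially the same route the paper takes: the paper omits the proof, stating that it is obtained by regularizing the potential via $\Psi_\varepsilon$ as in \eqref{defF1s}, invoking the existence results for regular potentials in [DGL] and [JWZ] (which are themselves proved by a Galerkin scheme of the type you describe), deriving the uniform energy and elliptic estimates, and passing to the limit as in [GGW2018], with the strict separation $|\vp|<1$ a.e.\ recovered exactly from the uniform bound on $\Psi_\varepsilon'(\vp_\varepsilon)$. Your sketch is consistent with that scheme; the only points left implicit (the monotonicity in $\varepsilon$ of $|F_\varepsilon'|$ needed to run the Fatou argument, and the a posteriori derivation of the energy \emph{identity} rather than inequality) are standard and are equally suppressed in the paper.
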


The proof of Theorem \ref{weak-theorem} can be achieved with the same method exploited in \cite[Theorem 2.1]{GGW2018}, combined with  \cite[Theorem 3.1]{DGL} and \cite[Theorem 2.1]{JWZ} which are valid for regular approximations $\Psi_\varepsilon$ of the potential $\Psi$ (cf. \eqref{defF1s} below). The proof is rather standard and lengthy, and thus it is omitted here.

\begin{remark}
The assumption on the initial total mass $|\overline{\vp}_0|<1$ indicates that the initial datum is allowed to be a mixture but not a single fluid (i.e. $\vp \equiv 1$ or $\vp\equiv -1$).
\end{remark}

\begin{remark}
\label{equiv}
Let $(\uu,p, \vp)$ be a weak solution given by Theorem \ref{weak-theorem}. The pressure $p$ is the weak solution of the Neumann problem
$$
\begin{cases}
- \mathrm{div}\, \Big( \frac{1}{\nu(\vp)} \nabla p \Big) =
- \mathrm{div}\, \Big(\frac{\mu \nabla \vp}{\nu(\vp)}\Big),\quad &\text{ in }\Omega,\\
\partial_\n p=0, \quad &\text{ on }\partial \Omega.
\end{cases}
$$
We note that $\overline{- \mathrm{div}\, (\frac{\mu \nabla \vp}{\nu(\vp)})}=0$ by virtue of the homogeneous Neumann boundary condition for $\vp$ and its regularity. For $v\in V$, multiplying \eqref{weak1} by $\frac{\nabla v}{\nu(\vp)}$ and integrating over $\Omega$, we obtain
$$
\int_\Omega \frac{1}{\nu(\vp)}\nabla p \cdot \nabla v \, \d x
= \int_\Omega \frac{1}{\nu(\vp)}\mu \nabla \vp \cdot \nabla v \, \d x.
$$
On the other hand, multiplying \eqref{weak1} by $\nabla v$, the pressure $p$ can be read as the weak solution of the Neumann problem
$$
\begin{cases}
- \Delta p =
- \mathrm{div}\, \big(-\nu(\vp)\uu+\mu \nabla \vp \big),\quad &\text{ in }\Omega,\\
\partial_\n p=0, \quad &\text{ on }\partial \Omega,
\end{cases}
$$
namely $p= \An \big( - \mathrm{div}\, \big(-\nu(\vp)\uu+\mu \nabla \vp \big)$.  
Since $\vp$ is bounded, using the relation $\mu \nabla \vp= \nabla (\mu \vp)-\vp \nabla \mu$ and integrating by parts, we have
\begin{align*}
|\l - \mathrm{div}\, \big(-\nu(\vp)\uu+\mu \nabla \vp\big), v\r|
%&\leq |(-\nu(\vp) \uu - \vp \nabla \mu , \nabla v )| + |(\mu \vp, \Delta v)|\\
&\leq C \big(\| \uu\|+ \| \nabla \mu\|+ \| \mu\|\big) \| \Delta v\|, \quad \forall \, v\in D(A).
\end{align*}
Recalling that $\An: (D(A))'\rightarrow L_0^2$ is a linear and continuous operator, and $\uu \in L^2(0,T;\H_\sigma)$ and $\mu \in L^2(0,T;V)$, the above inequality entails that $p\in L^2(0,T;L_0^2)$.
The Darcy's law \eqref{weak1} can be also rewritten only in terms on $\vp$, which will be needed in the next section. To this purpose, exploiting the identity
\begin{equation}
\label{identity}
\mu \nabla \vp=\nabla\Big(\frac12 |\nabla \vp|^2
+\Psi(\vp) \Big) -\mathrm{div}\, (\nabla \vp \otimes \nabla \vp),
\end{equation}
where $(a \otimes b)_{ij}=a_i b_j$, we rewrite \eqref{weak1} as follows
\begin{equation}
\label{weak1-2}
\nu(\vp)\uu + \nabla p^\ast = - \mathrm{div}\, (\nabla \vp \otimes \nabla \vp),\quad \text{ a.e. }(x,t) \in \Omega\times(0,T),
\end{equation}
where the modified pressure $p^\ast= p- \frac12 |\nabla \vp|^2
+\Psi(\vp)+ \overline{\frac12 |\nabla \vp|^2
+\Psi(\vp)}$. 
It is immediate to infer from assumption (A2) and the regularities  \eqref{reg-vp-weak} and \eqref{reg-vp-inf} that $p^\ast\in L^2(0,T;L_0^2)$. In addition, by \eqref{reg-vp-weak} and \eqref{reg-vp-inf}, it is possible to show (cf. \cite[(3.41)]{GGW2018}) that $\mathrm{div}\, (\nabla \vp \otimes \nabla \vp)\in L^q(0,T;H)$, with $q$ as in Theorem \ref{weak-theorem}, which in turn implies $p^\ast \in L^q(0,T;V)$.
\end{remark}

%%%% DISSIPATIVITY
%Next, we show that any global weak solution is dissipative, namely,
%\begin{theorem}[Dissipativity]
%\label{dissipative}
%Let the assumptions of Theorem \ref{main} hold.
%Then, any global weak solution $(\u, P, \vp)$ satisfies the following dissipative estimate
%$$
%\E( \vp(t))+ \int_{t}^{t+1}\Big( \| \uu(\tau)\|^2
%+  \|\nabla\mu(\tau)\|^2\Big) \, \d\tau
% \leq C \mathcal{E}( \vp_{0}){\rm e}^{-\omega t}+C, \quad \forall \, t\geq 0,
%$$
%where $\omega$ and $C$ are positive constants
%independent of the initial datum.
%Moreover, we have for all $t\geq 0$
%$$
%\int_t^{t+1} \Big(\| \vp(\tau)\|_{H^2(\Omega)}^4+ \| \vp(\tau)\|_{W^{2,p}(\Omega)}^2+
%\|S'(\vp(\tau))\|_{L^p(\Omega)}^2 \Big)\, \d \tau
%\leq \Big( C \mathcal{E}( \vp_{0}){\rm e}^{-\omega t}+C\Big)^2,
%$$
%where $p=6$ if $d=3$ or $p\in[2,+\infty)$ if $d=2$. Here, the
%positive constant $C$ depends on $\overline{\vp}_0\in (-1,1)$ and the parameter $p$, but is independent of other norms of the initial datum.
%\end{theorem}

\section{Uniqueness Results for Weak Solutions in Two Dimensions}
\label{S4}
\setcounter{equation}{0}

In this section we prove two results of uniqueness and continuous dependence for weak solutions in dimension two. First, we prove the uniqueness of solutions to the HSCH with logarithmic potential belonging to a slightly smaller set than weak solutions. More precisely, weak solutions are unique in the class of function satisfying $\vp \in L^\infty(0,T;W^{1,r}(\Omega))$, for some $r>2$ (cf. Theorem \ref{weak-theorem}). This result will be used in the next section to show the propagation of regularity for weak solutions. In addition, thanks to the existence of strong solutions (Theorem \ref{strong2d}), this can be interpreted as a weak-strong uniqueness result. Next, we show the uniqueness of weak solutions to the HSCH system when the logarithmic potential is replaced by its well-known regular (polynomial) approximation.

\subsection{Uniqueness criterion for the HSCH system with logarithmic potential}

\begin{theorem}
\label{weak-strong}
Let $d=2$. Assume that (A1)-(A2) hold. Given $\vp_{01}$,$\vp_{02}$ such that $\vp_{01}\in W^{1,r}(\Omega)$ for some $r>2$, $\vp_{02}\in V$, $\|\vp_{0i}\|_{L^\infty(\Omega)}\leq 1$, $i=1,2$, and $\overline{\vp}_{01}=\overline{\vp}_{02}\in (-1,1)$, consider the two weak solutions $(\uu_1,p_1,\vp_1)$ and $(\uu_2,p_2, \vp_2)$
to \eqref{CHHS}-\eqref{bdini} on $[0,T]$ 
with initial data $\vp_{01}$ and $\vp_{02}$, 
respectively. In addition, suppose that $\vp_1\in L^\infty(0,T; W^{1,r}(\Omega))$. Then, there exists a constant $C>0$ such that
\begin{align*}
\| \vp_1(t)-\vp_2(t)\|_{V_0'}\leq  C\| \vp_{01}-\vp_{02}\|_{V_0'} 
, \quad \forall \, t\in [0,T].
\end{align*}
\end{theorem}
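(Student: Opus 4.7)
Setting $\vp = \vp_1 - \vp_2$, $\u = \u_1 - \u_2$, $\mu = \mu_1 - \mu_2$, and $p^\ast = p_1^\ast - p_2^\ast$ (modified pressures from Remark \ref{equiv}), the plan is to derive a differential inequality for $\|\vp(t)\|_{V_0'}^2$ and close it via Gronwall. Mass conservation \eqref{mass} together with $\overline{\vp}_{01} = \overline{\vp}_{02}$ gives $\vp(t) \in L^2_0$ for every $t$, so $\An \vp \in V_0$ is a valid test function. Subtracting the weak formulations \eqref{weak2} and choosing $v = \An \vp$ produces
\[
\tfrac{1}{2}\tfrac{d}{dt}\|\vp\|_{V_0'}^2 + (\mu,\vp) = -\int_\Omega (\u\cdot\nabla\vp_1 + \u_2\cdot\nabla\vp)\, \An\vp \, \d x,
\]
thanks to $(\nabla\mu,\nabla\An\vp) = (\mu,\vp)$ (using $\partial_{\n}\An\vp = 0$). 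The convexity of $F$ yields $(F'(\vp_1)-F'(\vp_2),\vp)\geq 0$, and combining with $\|\vp\|^2\leq \|\vp\|_{V_0'}\|\nabla\vp\|$ from \eqref{I} and Young's inequality, I get $(\mu,\vp)\geq \tfrac{1}{2}\|\nabla\vp\|^2 - C\|\vp\|_{V_0'}^2$, which puts the dissipative $\|\nabla\vp\|^2$ on the left.

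Integration by parts (using $\mathrm{div}\,\u_2 = 0$, $\u_2\cdot\n = 0$) turns the second half of the convective term into $\int \vp\, \u_2\cdot\nabla\An\vp$. The 2D Ladyzhenskaya estimates $\|\vp\|_{L^4}\leq C\|\vp\|_{V_0'}^{1/4}\|\nabla\vp\|^{3/4}$ and $\|\nabla\An\vp\|_{L^4}\leq C\|\vp\|_{V_0'}^{3/4}\|\nabla\vp\|^{1/4}$ (the latter via $\|\An\vp\|_{H^2}\leq C\|\vp\|$ from \eqref{N}) bound this term by $C\|\u_2\|\|\vp\|_{V_0'}\|\nabla\vp\|$, hence by $\varepsilon\|\nabla\vp\|^2 + C_\varepsilon\|\u_2\|^2\|\vp\|_{V_0'}^2$ with $\|\u_2\|^2\in L^1(0,T)$. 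The first half, integrated by parts likewise, becomes $\int \vp_1\, \u\cdot\nabla\An\vp$; estimating it through $\|\u\|$ extracted directly from the Darcy law would require $\|\nabla\mu\|$ and lead to a non-closable inequality, so instead I substitute $\u$ via the reformulated Darcy law \eqref{weak1-2}:
\[
\nu(\vp_1)\u + \nabla p^\ast = -\mathrm{div}\,(\nabla\vp\otimes\nabla\vp_1) - \mathrm{div}\,(\nabla\vp_2\otimes\nabla\vp) - (\nu(\vp_1)-\nu(\vp_2))\u_2,
\]
so that $\int \vp_1\,\u\cdot\nabla\An\vp$ decomposes into four integrals $Z_1,\dots,Z_4$ obtained by multiplying the right-hand side by $\tfrac{\vp_1}{\nu(\vp_1)}\nabla\An\vp$ and integrating.

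Each $Z_i$ is reorganized by a further integration by parts chosen so that $\nabla\vp$ is the highest-order derivative of the unknown that remains. All boundary terms vanish: $\partial_{\n}\vp_1 = \partial_{\n}\vp_2 = 0$ eliminate those from the tensor divergences, $\partial_{\n}\An\vp = 0$ eliminates the one from $\nabla p^\ast$, and the viscosity-difference term carries no boundary contribution. A key cancellation occurs in $Z_1$ and $Z_2$: expanding, for example, $(\mathrm{div}\,(\nabla\vp\otimes\nabla\vp_1))_i = (\nabla\vp_1\cdot\nabla)\partial_i\vp + \partial_i\vp\,\Delta\vp_1$, integrating by parts in $\partial_j$ against $\tfrac{\vp_1}{\nu(\vp_1)}\partial_i\An\vp$ and summing, the $\Delta\vp_1$ piece is cancelled \emph{exactly} by one of the bulk terms generated by the integration by parts. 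What remains are products of $\nabla\vp$, $\nabla\vp_i$, $D^2\An\vp$, $p^\ast$, and $\u_2$; these are bounded using $|\vp_1|\leq 1$ a.e., the hypothesis $\|\nabla\vp_1\|_{L^\infty(0,T;L^r)}$ with $r>2$, the elliptic estimate $\|\An\vp\|_{H^2}\leq C\|\vp\|$, the 2D Ladyzhenskaya and Gagliardo-Nirenberg interpolations, and the time regularities of $\vp_2$, $\u_2$, $p^\ast$ from Theorem \ref{weak-theorem} and Remark \ref{equiv}.

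Collecting all bounds, one arrives at $\tfrac{d}{dt}\|\vp\|_{V_0'}^2 + \tfrac{1}{2}\|\nabla\vp\|^2 \leq g(t)\|\vp\|_{V_0'}^2$ with $g\in L^1(0,T)$, and Gronwall's lemma yields the stated continuous-dependence estimate. I expect the main obstacle to be the delicate reorganization of the $Z_i$ integrals described above: a priori they contain second derivatives of both $\vp$ and $\vp_i$, and only the algebraic cancellation of the Laplacian contribution combined with the simultaneous vanishing of three distinct boundary terms—and together with the extra spatial integrability $\nabla\vp_1\in L^\infty(0,T;L^r)$ for some $r>2$—allows the inequality to be closed in the $V_0'$-norm without any control on $\|\mu\|_{L^2}$, which is unavailable here because of the singular logarithmic character of $F'$.
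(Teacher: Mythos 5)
Your overall strategy coincides with the paper's: test with $\An\vp$, push the dissipative $\|\nabla\vp\|^2$ to the left via the monotonicity of $F'$, and eliminate the velocity difference from the dangerous convective term by substituting the reformulated Darcy law \eqref{weak1-2}, reorganizing the tensor divergences by integration by parts so that only $\nabla\vp$ (and not $D^2\vp$) survives, with all boundary terms killed by the Neumann conditions. The variation in your splitting of the convective term ($\u\cdot\nabla\vp_1+\u_2\cdot\nabla\vp$ instead of the paper's $\u_1\cdot\nabla\vp+\u\cdot\nabla\vp_2$, and testing the Darcy law against $\tfrac{\vp_1}{\nu(\vp_1)}\nabla\An\vp$ rather than $\tfrac{1}{\nu(\vp_1)}\P(\vp_2\nabla\An\vp)$) is harmless.

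There is, however, a genuine gap in your treatment of the pressure term $Z_1=-\big(\nabla p^\ast,\tfrac{\vp_1}{\nu(\vp_1)}\nabla\An\vp\big)$. After integrating by parts you are left with $p^\ast=p_1^\ast-p_2^\ast$ in $L^2(\Omega)$ multiplied by quantities that are only \emph{linear} in the difference $\vp$ (e.g.\ $\|p^\ast\|\,\|\nabla\vp_1\|_{L^r}\|\nabla\An\vp\|_{L^{2r/(r-2)}}$). Invoking "the time regularity of $p^\ast$ from Remark \ref{equiv}" — i.e.\ treating $\|p^\ast(t)\|$ as given external data in $L^2(0,T)$ — cannot close the Gronwall argument: Young's inequality then produces an additive term $\int_0^t\|p^\ast\|^{a}\|\vp\|_{V_0'}^{b}\,\d\tau$ with $b<2$ that does not vanish as $\vp_{01}\to\vp_{02}$, so no continuous-dependence estimate follows. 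What is needed — and what constitutes the technical core of the paper's proof — is a \emph{pointwise-in-time} bound of the pressure difference in terms of the difference of the order parameters, namely \eqref{pressure}:
\begin{equation*}
\| p^\ast \| \leq C_R \big(\| \nabla \vp_1\|_{L^\infty(\Omega)}+ \|\nabla \vp_2 \|_{L^\infty(\Omega)}\big)\|\nabla \vp \|
+C_R \| \u_2\| \, \| \vp \|_{V_0'}^{\frac{1}{r}} \| \nabla \vp\|^{\frac{r-1}{r}}.
\end{equation*}
The paper obtains this by a duality argument: it tests the Darcy difference equation (divided by $\nu(\vp_1)$) against $\nabla q$, where $q$ solves the non-constant-coefficient Neumann problem $-\mathrm{div}\big(\nu(\vp_1)^{-1}\nabla q\big)=p^\ast$, and uses the $H^2$ elliptic estimate of Theorem \ref{NP-nc}, $\|q\|_{H^2(\Omega)}\le C_R\|p^\ast\|$ — which is precisely where the hypothesis $\vp_1\in L^\infty(0,T;W^{1,r}(\Omega))$ with $r>2=d$ enters (you attribute the role of $r>2$ only vaguely to "closing the inequality"). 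Without this step your differential inequality is not of Gronwall type. A secondary omission: to conclude that the Gronwall coefficient is in $L^1(0,T)$ one must check that $\|\nabla\vp_i\|_{L^\infty(\Omega)}^{2r/(r-1)}$ is integrable in time, which the paper does via the Br\'ezis--Gallouet--Wainger inequality combined with $\vp_i\in L^4(0,T;H^2(\Omega))\cap L^2(0,T;W^{2,p}(\Omega))$ and the fact that $2r/(r-1)<4$; this also does not appear in your outline.
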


\begin{proof}
Let $(\uu_1,p_1,\vp_1)$ and $(\uu_2,p_2,\vp_2)$ be two global weak solutions to problem \eqref{CHHS}-\eqref{bdini}
on $[0,T]$ with initial data $\vp_{01}$ and $\vp_{02}$, respectively. Setting $ \uu=\uu_1-\uu_2$, $ p^\ast=p_1^\ast-p_2^\ast$ and $ \vp=\vp_1-\vp_2$, we have 
\begin{equation}
\label{diff-problem}
\l \partial_t  \vp, v\r - ( \u_1 \vp, \nabla v )
- (  \u \vp_2,\nabla v ) +
( \nabla  \mu,\nabla v )=0, \quad \forall\, v\in V,
\end{equation}
for almost every $t\in (0,T)$, where $\u$ and $p^\ast$ satisfy  (see \eqref{weak1-2} in Remark \ref{equiv}) 
\begin{equation}
\label{diff-mu-u}
\nu(\vp_1) \uu+ \nabla p^\ast=- \text{div}(\nabla \vp_1 \otimes \nabla \vp)- \text{div}(\nabla \vp \otimes \nabla \vp_2)- (\nu(\vp_1)-\nu(\vp_2))\uu_2,
\end{equation}
and $\mu:=\mu_1-\mu_2$ is given by
$\mu=- \Delta \vp+ \Psi'(\vp_1)-\Psi'(\vp_2) $.
We recall that $\overline{\vp}(t)=0$, for all $t\in (0,T)$. We have the uniform controls (see Theorem \ref{weak-theorem})
\begin{equation}
\label{est-solutions}
\| \vp_i\|_{L^\infty(\Omega \times (0,T))}\leq 1,\quad 
\| \vp_i\|_{L^\infty(0,T;V)}\leq C_0, \ i=1,2, \ \text{ and }\ 
\| \vp_1\|_{L^\infty(0,T;W^{1,r}(\Omega))}\leq R,
\end{equation}
for some $R>0$ due to the regularity condition on $\vp_1$.
Hereafter $C$ will denote a generic constant depending on the parameter of the system, interpolation and embedding results and $C_0$, whereas $C_R$ is a generic constant whose value depend on $R$ in \eqref{est-solutions}.

Taking $v=\An \vp$ in \eqref{diff-problem}, we find the differential equation
$$
\frac12 \ddt \| \vp\|_{V_0'}^2 + (\mu,\vp) = I_1+I_2,
$$
where 
\begin{equation}
\label{Idef}
I_1= (\uu_1 \vp, \nabla \An \vp), \quad 
I_2= (\uu \vp_2, \nabla \An \vp).
\end{equation}
Integrating by parts, and using \eqref{I} and assumption (A2), we obtain
\begin{align}
(\mu,\vp)&= \| \nabla \vp\|^2 + (\Psi'(\vp_1)-\Psi'(\vp_2), \vp)\notag\\
&\geq \| \nabla \vp\|^2- \alpha\| \vp\|^2\notag\\
&\geq \frac12 \| \nabla \vp\|^2-C \| \vp\|_{V_0'}^2.
\label{muvp}
\end{align}
By exploiting \eqref{L}, \eqref{I} and \eqref{N} , we control $I_1$ as follows
\begin{align}
I_1&\leq \| \uu_1\| \| \vp\|_{L^4(\Omega)} \| \nabla \An \vp\|_{L^4(\Omega)}\notag\\
%&\leq C \|\uu_1\| \| \vp\| \| \nabla \vp\|^\frac12 \|\vp \|_{V_0'}^\frac12 \notag\\
&\leq C \|\uu_1\| \| \nabla \vp\| \|\vp \|_{V_0'} \notag\\
&\leq  \frac{1}{4} \| \nabla \vp\|^2 + C \| \uu_1\|^2 \| \vp\|_{V_0'}^2. \label{I1}
\end{align}
Thanks to \eqref{muvp} and \eqref{I1}, we deduce the differential inequality
\begin{equation}
\label{Idef2}
\frac12 \ddt \| \vp\|_{V_0'}^2 + \frac14 \| \nabla \vp\|^2 
\leq C \big( 1+ \| \uu_1\|^2\big) \| \vp\|_{V_0'}^2 +I_2.
\end{equation}
The rest of the proof is devoted to estimate $I_2$. Since $\nu(\vp_1)$ is strictly positive (cf. (A1)), using the Leray projection operator and \eqref{diff-mu-u}, we rewrite $I_2$ as follows
\begin{align}
I_2&= (\uu, \P(\vp_2 \nabla \An \vp)) \notag \\
&= \Big( \nu(\vp_1)\uu, \frac{1}{\nu(\vp_1)} \P(\vp_2 \nabla \An \vp)\Big)\notag \\
&= -\Big( \nabla p^\ast, \frac{1}{\nu(\vp_1)} \P(\vp_2 \nabla \An \vp) \Big) - \Big( \text{div}(\nabla \vp_1 \otimes \nabla \vp), \frac{1}{\nu(\vp_1)} \P(\vp_2 \nabla \An \vp ) \Big) \notag \\
&\quad - \Big( \text{div}(\nabla \vp \otimes \nabla \vp_2), \frac{1}{\nu(\vp_1)} \P(\vp_2 \nabla \An \vp) \Big) - \Big(\big(\nu(\vp_1)-\nu(\vp_2)\big)\uu_2, \frac{1}{\nu(\vp_1)} \P(\vp_2 \nabla \An \vp) \Big) \notag \\
&= Z_1+Z_2+Z_3+Z_4. \label{I2}
\end{align}
We observe that $Z_1$ can be rewritten as follows
\begin{align*}
Z_1&=-\Big( \frac{1}{\nu(\vp_1)} \nabla p^\ast, \P(\vp_2 \nabla \An \vp) \Big)\\
&= -\Big( \nabla \Big(\frac{p^\ast}{\nu(\vp_1)}\Big), \P(\vp_2 \nabla \An \vp) \Big)- \Big( p^\ast \frac{\nu'(\vp_1)}{\nu^2(\vp_1)}\nabla \vp_1, \P(\vp_2 \nabla \An \vp) \Big)\\
&= - \Big( p^\ast \frac{\nu'(\vp_1)}{\nu^2(\vp_1)}\nabla \vp_1, \P(\vp_2 \nabla \An \vp) \Big).
\end{align*}
Here we have used that $\P\nabla v=0$ for $v\in V$.
By \eqref{O}, \eqref{GN2}, \eqref{I}, \eqref{N} and \eqref{est-solutions}, we have
%\begin{align*}
%\mathcal{Z}_1&\leq C \| p^\ast\| \| \nabla \vp_1\|_{\mathbf{L}^r(\Omega)}
% \|\P(\vp_2 \nabla \An \vp) \|_{\mathbf{L}^{\frac{2r}{r-2}}(\Omega)}\\
%&\leq C \| p^\ast\| \| \nabla \vp_1\|_{\mathbf{L}^r(\Omega)} \| \nabla \An \vp\|_{\mathbf{L}^\frac{2r}{r-2}(\Omega)}\\
%&\leq C \| p^\ast\| \| \nabla \vp_1\|_{\mathbf{L}^r(\Omega)} \| \vp\|_{V_0'}^{\frac{r-2}{r}} \|\vp\|^{\frac{2}{r}}.
%\end{align*}
\begin{align*}
\|\P(\vp_2 \nabla \An \vp) \|_{L^{\frac{2r}{r-2}}(\Omega)}
&\leq C \| \nabla \An \vp\|_{L^\frac{2r}{r-2}(\Omega)}\\
&\leq C \| \vp\|_{V_0'}^{\frac{r-2}{r}} \|\vp\|^{\frac{2}{r}}\\
&\leq C \| \vp\|_{V_0'}^{\frac{r-1}{r}} \| \nabla \vp\|^{\frac{1}{r}}.
\end{align*}
Hence, we get
\begin{equation}
\label{Z1}
Z_1 \leq C \| p^\ast\| \| \nabla \vp_1\|_{L^r(\Omega)}  \| \vp\|_{V_0'}^{\frac{r-1}{r}} \| \nabla \vp\|^{\frac{1}{r}}. 
\end{equation}
In order to find a control of $p^\ast$ in $L^2(\Omega)$, we divide \eqref{diff-mu-u} by $\nu(\vp_1)$ and we test the resulting equation by $\nabla q$, where $q \in H^2(\Omega)$ with $\partial_{\n} q=0$ on $\partial \Omega$. Then, we obtain 
\begin{align}
\Big( \frac{1}{\nu(\vp_1)} \nabla p^\ast, \nabla q \Big)&=
- \Big( \frac{1}{\nu(\vp_1)} \mathrm{div}\, ( \nabla \vp_1 \otimes \nabla \vp ),\nabla q \Big)
- \Big( \frac{1}{\nu(\vp_1)} \mathrm{div}\, ( \nabla \vp \otimes \nabla \vp_2 ),\nabla q \Big) \notag \\
&\quad -\Big(  \frac{\nu(\vp_1)-\nu(\vp_2)}{\nu(\vp_1)} \uu_2, \nabla q\Big). \label{pi1}
\end{align} 
Due to the boundary condition satisfied by $q$, after an integration by parts, we have
$$
\Big( \frac{1}{\nu(\vp_1)}\nabla p^\ast , \nabla q \Big)=-\Big( p^\ast, \mathrm{div}\, \Big( \frac{\nabla q}{\nu(\vp_1)}\Big)\Big).
$$
For a $d\times d$ tensor $S$ and two vector fields $\vv$ and $\ww$, we recall the relations 
$\mathrm{div}\, \big( S^t \vv\big)= S^t : \nabla \vv + \mathrm{div}\, S \cdot \vv$, where $A:B= \sum_{i,j=1}^d A_{ij}B_{ij}$, and $(\vv\otimes \ww)^t=\ww\otimes \vv$. Accordingly, we rewrite the first two terms on the right-hand side of \eqref{pi1} as follows
\begin{align*}
&- \int_{\Omega}  \mathrm{div}\, ( \nabla \vp_1 \otimes \nabla \vp ) \cdot \frac{\nabla q}{\nu(\vp_1)} \, \d x
- \int_{\Omega}  \mathrm{div}\, ( \nabla \vp \otimes \nabla \vp_2 ) \cdot \frac{\nabla q}{\nu(\vp_1)} \, \d x \\
&= - \int_{\Omega} \mathrm{div} \, \Big( (\nabla \vp_1 \otimes \nabla \vp)^t \frac{\nabla q}{\nu(\vp_1)} \Big)\, \d x+
\int_{\Omega} \nabla \vp_1 \otimes \nabla \vp : \nabla \Big( \frac{\nabla q}{\nu(\vp_1)} \Big) \, \d x\\
& \quad - \int_{\Omega} \mathrm{div} \, \Big( (\nabla \vp \otimes \nabla \vp_2)^t \frac{\nabla q}{\nu(\vp_1)} \Big)\, \d x+
\int_{\Omega} \nabla \vp \otimes \nabla \vp_2 : \nabla \Big( \frac{\nabla q}{\nu(\vp_1)} \Big) \, \d x\\
&= - \int_{\partial \Omega} \frac{1}{\nu(\vp_1)} 
\nabla \vp \otimes \nabla \vp_1 \nabla q \cdot \n \, \d \sigma 
+\int_{\Omega} 
\nabla \vp_1 \otimes \nabla \vp : \nabla \Big( \frac{\nabla q}{\nu(\vp_1)} \Big) \, \d x\\
&\quad  - \int_{\partial \Omega} \frac{1}{\nu(\vp_1)} 
\nabla \vp_2 \otimes \nabla \vp \nabla q \cdot \n \, \d \sigma 
+\int_{\Omega} \nabla \vp \otimes \nabla \vp_2 : \nabla \Big( \frac{\nabla q}{\nu(\vp_1)} \Big) \, \d x\\
&= - \int_{\partial \Omega} \frac{1}{\nu(\vp_1)} 
 \Big( \nabla \vp_1 \cdot \nabla q\Big) 
 \Big( \nabla \vp \cdot \n \Big) \, \d \sigma 
+\Big(\nabla \vp_1 \otimes \nabla \vp, \nabla \Big( \frac{\nabla q}{\nu(\vp_1)}  \Big) \Big)\\
&\quad  - \int_{\partial \Omega} \frac{1}{\nu(\vp_1)} 
\Big( \nabla \vp \cdot \nabla q \Big) \Big( \nabla \vp_2 \cdot \n \Big) \, \d \sigma 
+\Big(\nabla \vp_1 \otimes \nabla \vp, \nabla \Big( \frac{\nabla q}{\nu(\vp_1)} \Big) \Big)\\
&= \Big(\nabla \vp_1 \otimes \nabla \vp, \nabla \Big( \frac{\nabla q}{\nu(\vp_1)} \Big) \Big)+
\Big(\nabla \vp \otimes \nabla \vp_2, \nabla \Big( \frac{\nabla q}{\nu(\vp_1)} \Big) \Big).
\end{align*}
Thus, combining the two expressions above with \eqref{pi1}, we deduce that
\begin{align}
-\Big( p^\ast, \mathrm{div}\, \Big( \frac{\nabla q}{\nu(\vp_1)}\Big)\Big)&= \Big(\nabla \vp_1 \otimes \nabla \vp + \nabla \vp \otimes \nabla \vp_2, \nabla \Big( \frac{\nabla q}{\nu(\vp_1)} \Big) \Big) \notag\\
&\quad -\Big(  \frac{\nu(\vp_1)-\nu(\vp_2)}{\nu(\vp_1)} \uu_2, \nabla q\Big). \label{pi2}
\end{align}
%In particular, we observe that \eqref{pi2} holds for almost every $t\in (0,T)$ provided that $q\in L^2(0,T;H^2(\Omega))$ such that $\partial_{\n} q=0$ on $\partial \Omega\times (0,T)$.
Next, we choose the test function $q$ such that 
\begin{equation}
\label{q-def}
\begin{cases}
-\mathrm{div}\, \big( \frac{1}{\nu(\vp_1)} \nabla q \big) = p^\ast, \quad &\text{ in }\Omega ,\\
\partial_\n q=0, \quad &\text{ on }\partial \Omega.
\end{cases}
\end{equation}
Since $p^\ast \in L^2(0,T;L_0^2)$ and $\vp_1\in L^\infty(0,T;W^{1,r}(\Omega))$, Theorem \ref{NP-nc} entails that there exists a unique $q\in L^2(0,T;H^2(\Omega))$ which satisfies \eqref{q-def}. 
In particular, we have the following estimate
\begin{equation}
\label{estqH2}
\| q\|_{H^2(\Omega)}\leq C_R \| p^\ast\|.
\end{equation}
 By definition of $q$, we can rewrite \eqref{pi2} as
\begin{align}
\| p^\ast\|^2 &= \Big(\nabla \vp_1 \otimes \nabla \vp+\nabla \vp \otimes \nabla \vp_2, \nabla \Big( \frac{1}{\nu(\vp_1)} \nabla q \Big) \Big) -\Big(  \frac{\nu(\vp_1)-\nu(\vp_2)}{\nu(\vp_1)} \uu_2, \nabla q\Big) \notag\\
&= \Big(\nabla \vp_1 \otimes \nabla \vp+\nabla \vp \otimes \nabla \vp_2, \frac{\nu'(\vp_1)}{\nu(\vp_1)^2} \nabla \vp_1 \otimes \nabla q + \frac{1}{\nu(\vp_1)} \nabla \nabla q \Big) \notag \\
&\quad -\Big(  \frac{\nu(\vp_1)-\nu(\vp_2)}{\nu(\vp_1)} \uu_2, \nabla q\Big).
\label{pL2}
\end{align}
By using assumption (A1), the embedding $V\hookrightarrow L^p(\Omega)$, for any $p\geq 1$, together with \eqref{GN2}, \eqref{I} and \eqref{estqH2}, we control the terms on right-hand side as follows
\begin{align*}
\Big(\nabla \vp_1 \otimes \nabla \vp+&\nabla \vp \otimes \nabla \vp_2, \frac{\nu'(\vp_1)}{\nu(\vp_1)^2} \nabla \vp_1 \otimes \nabla q + \frac{1}{\nu(\vp_1)} \nabla \nabla q \Big) \\
&\leq C \Big(\| \nabla \vp_1\|_{L^\infty(\Omega)}+ \|\nabla \vp_2 \|_{L^\infty(\Omega)}\Big)  \Big(1+\| \nabla \vp_1\|_{L^r(\Omega)}\Big) \|\nabla \vp \| \| q\|_{H^2(\Omega)}\\
&\leq C_R \Big(\| \nabla \vp_1\|_{L^\infty(\Omega)}+ \|\nabla \vp_2 \|_{L^\infty(\Omega)}\Big) \|\nabla \vp \| \| p^\ast\|,
\end{align*}
and
\begin{align*}
-\Big(  \frac{\nu(\vp_1)-\nu(\vp_2)}{\nu(\vp_1)} \uu_2, \nabla q\Big) &\leq C \| \uu_2\| \| \vp \|_{L^r(\Omega)}  \| q\|_{H^2(\Omega)}\\
&\leq 
C_R \| \uu_2 \|\| \vp\|^{\frac{2}{r}}\| \nabla \vp\|^{\frac{r-2}{r}}\| p^\ast\|\\
&\leq C_R \| \uu_2 \|\| \vp\|_{V_0'}^{\frac{1}{r}}\| \nabla \vp\|^{\frac{r-1}{r}}\| p^\ast\|.
\end{align*}
Hence, we obtain the following estimate for the pressure
\begin{equation}
\label{pressure}
\| p^\ast \| \leq C_R \big(\| \nabla \vp_1\|_{L^\infty(\Omega)}+ \|\nabla \vp_2 \|_{L^\infty(\Omega)}\big) 
 \|\nabla \vp \|
+C_R \| \u_2\| \| \vp \|_{V_0'}^{\frac{1}{r}} \| \nabla \vp\|^{\frac{r-1}{r}}.
\end{equation}
Combining \eqref{Z1} with \eqref{pressure}, we deduce that
\begin{align*}
Z_1&\leq 
C_R \big(\| \nabla \vp_1\|_{L^\infty(\Omega)}+ \|\nabla \vp_2 \|_{L^\infty(\Omega)}\big) \| \vp\|_{V_0'}^{\frac{r-1}{r}} \| \nabla \vp\|^{\frac{r+1}{r}}+
C_R \| \u_2\| \| \vp\|_{V_0'} \| \nabla \vp\|.
\end{align*}
Then, by Young's inequality
\begin{align}
Z_1&\leq 
C_R \big(\| \nabla \vp_1\|_{L^\infty(\Omega)}+ \|\nabla \vp_2 \|_{L^\infty(\Omega)}\big) \| \vp\|_{V_0'}^{\frac{r-1}{r}} \| \nabla \vp\|^{\frac{r+1}{r}}+
C_R \| \u_2\| \| \vp\|_{V_0'} \| \nabla \vp\| \notag \\
&\leq \frac{1}{32} \| \nabla \vp\|^2+ C_R \Big(\| \nabla \vp_1\|_{L^\infty(\Omega)}^{\frac{2r}{r-1}}+ \|\nabla \vp_2 \|_{L^\infty(\Omega)}^{\frac{2r}{r-1}}+ \| \u_2\|^2 \Big) \| \vp\|_{V_0'}^2. \label{Z1final}
\end{align}
We now proceed to estimate $Z_2$ and $Z_3$.
Using integration by parts, we have
\begin{align}
Z_2&= - \int_{\Omega} \mathrm{div}\, \Big( (\nabla \vp_1\otimes \nabla \vp)^t \frac{\P(\vp_2 \nabla \An \vp)}{\nu(\vp_1)} \Big)
\, \d x+ \int_{\Omega} \nabla \vp_1\otimes \nabla \vp : \nabla \Big( \frac{ \P(\vp_2\nabla \An \vp)}{\nu(\vp_1)} \Big) \, \d x \notag \\
&=- \int_{\partial \Omega} \nabla \vp\otimes \nabla \vp_1 \frac{\P(\vp_2 \nabla \An \vp)}{\nu(\vp_1)} \cdot \n \, \d \sigma +
\int_{\Omega} \nabla \vp_1\otimes \nabla \vp : \nabla \Big( \frac{ \P(\vp_2\nabla \An \vp)}{\nu(\vp_1)} \Big) \, \d x \notag \\
&=- \int_{\partial \Omega} \Big( \nabla \vp_1 \cdot \frac{\P(\vp_2\nabla \An \vp)}{\nu(\vp_1)}\Big) \Big(\nabla \vp \cdot \n\Big) \, \d \sigma +
\Big( \nabla \vp_1\otimes \nabla \vp, \nabla \Big( \frac{\P(\vp_2 \nabla \An \vp)}{\nu(\vp_1)}\Big) \Big) \notag \\
&= \Big( \nabla \vp_1\otimes \nabla \vp, 
\frac{ \nabla\P(\vp_2 \nabla \An \vp)}{\nu(\vp_1)}
+ \P(\vp_2 \nabla \An \vp) \otimes \nabla \frac{1}{\nu(\vp_1)} \Big).
\label{Z2-def}
\end{align}
Similarly, we find
\begin{equation}
\label{Z3-def}
Z_3= \Big( \nabla \vp\otimes \nabla \vp_2,  \frac{ \nabla\P(\vp_2 \nabla \An \vp)}{\nu(\vp_1)}
+ \P(\vp_2 \nabla \An \vp) \otimes \nabla \frac{1}{\nu(\vp_1)}\Big).
\end{equation}
By exploiting (A1), \eqref{O}, \eqref{L}, \eqref{Ad2} and \eqref{est-solutions}, we obtain
\begin{align}
Z_2&\leq C\| \nabla \vp_1\|_{L^4(\Omega)}
\| \nabla \vp\| \Big(
\Big\|  \frac{\nabla\P(\vp_2 \nabla \An \vp)}{\nu(\vp_1)} \Big\|_{L^{4}(\Omega)} + 
\Big\|\P(\vp_2 \nabla \An \vp) \otimes \nabla \frac{1}{\nu(\vp_1)} \Big\|_{L^4(\Omega)}
\Big)\nonumber\\
&\leq C
\| \vp_1\|_{H^2(\Omega)}^{\frac12}
\| \nabla \vp\| \times \notag \\
&\quad \Big(
\| \nabla \P(\vp_2 \nabla \An \vp)
\|^{\frac12}
\| \nabla \P(\vp_2 \nabla \An \vp)
\|_{V}^{\frac12}+
\|\P(\vp_2 \nabla \An \vp) \|_{L^\infty(\Omega)}
\Big\| \frac{\nu'(\vp_1)}{\nu(\vp_1)^2}\nabla \vp_1\Big\|_{L^4(\Omega)}\Big)
\nonumber\\
&\leq C \| \vp_1\|_{H^2(\Omega)}^{\frac12}
\| \nabla \vp\| \times \notag \\
&\Big(
\|\vp_2 \nabla \An \vp\|_V^{\frac12}
\| \vp_2 \nabla \An \vp\|_{H^2(\Omega)}^{\frac12}+
\|\vp_2 \nabla \An \vp \|^{\frac12}
\|\vp_2 \nabla \An \vp \|_{H^2(\Omega)}^{\frac12}
\| \vp_1\|_{H^2(\Omega)}^{\frac12} \Big)
\label{Z2-1}
\end{align}
By using \eqref{Ad2}, \eqref{prodV}, \eqref{prodH2}, \eqref{I}, \eqref{N} and \eqref{est-solutions}, we preliminary infer that
\begin{align}
\| \vp_2 \nabla \An \vp\|_{V}
&\leq C \| \vp_2\|_{V} \| \nabla \An \vp\|_{L^\infty(\Omega)}+ C \| \vp_2\|_{L^\infty(\Omega)} \| \nabla \An \vp\|_V \notag\\
&\leq C \| \vp\|_{V_0'}^{\frac12} \| \nabla \vp\|^{\frac12}.
\label{P-V} 
\end{align}
and
\begin{align}
\| \vp_2 \nabla \An \vp
\|_{H^2(\Omega)}
&\leq C \| \vp_2\|_{H^2(\Omega)} \| \nabla \An \vp\|_{L^\infty(\Omega)}+
C \| \vp_2\|_{L^\infty(\Omega)}\| \nabla \An \vp\|_{H^2(\Omega)}\notag\\
&\leq C \| \vp_2\|_{H^2} \| \vp\|_{V_0'}^\frac12 \| \nabla \vp\|^{\frac12}+ C\| \nabla \vp\|.
\label{P-H2}
\end{align}
Combining \eqref{Z2-1} with \eqref{P-V} and \eqref{P-H2}, and then using the Young's inequality, we obtain
\begin{align}
Z_2&\leq
C \big( \| \vp_1\|_{H^2(\Omega)}^{\frac12}
\| \vp_2\|_{H^2(\Omega)}^\frac12 + \|\vp_1\|_{H^2(\Omega)} \big) \| \vp\|_{V_0'}^{\frac12}
\| \nabla \vp\|^{\frac32}
+C \| \vp_1\|_{H^2(\Omega)}^{\frac12}
\| \vp\|_{V_0'}^{\frac14}\| \nabla \vp\|^{\frac74} \notag\\
&\quad +\|\vp_1\|_{H^2(\Omega)} \| \vp_2\|_{H^2(\Omega)}^\frac12 
\| \vp\|_{V_0'}^\frac32 \| \nabla \vp\|^\frac54\notag\\
&\leq \frac{1}{32}\| \nabla \vp\|^2+
C \big( \| \vp_1\|_{H^2(\Omega)}^4+ \| \vp_2\|_{H^2(\Omega)}^4 \big)\| \vp\|_{V_0'}^2.
\label{Z2-final}
\end{align}
By the same argument, we easily deduce that
\begin{equation}
Z_3\leq \frac{1}{32} \| \nabla \vp\|^2 + 
C \big( \| \vp_1\|_{H^2(\Omega)}^4+\| \vp_2\|_{H^2(\Omega)}^4
\big) \| \vp\|_{V_0'}^2. 
\label{Z3-final}
\end{equation}
We now control $Z_4$.  By (A1), \eqref{I}, \eqref{L} and \eqref{est-solutions}, we have
\begin{align}
Z_4 &\leq C  \| \uu_2\| \| \vp\|_{L^4(\Omega)} 
\| \P (\vp_2 \nabla \An \vp) \|_{L^4(\Omega)}\notag\\
&\leq C \| \uu_2\| \| \vp\|^\frac12 \|\nabla \vp\|^\frac12 
\| \nabla \N \vp\|^\frac12
\| \nabla \N \vp\|_{V}^\frac12\notag\\
&\leq C \| \uu_2\| \| \vp\|_{V_0'} \| \nabla \vp\|\notag\\
&\leq \frac{1}{32} \|\nabla \vp\|^2 + C \| \uu_2\|^2 \| \vp\|_{V_0'}^2.
\label{Z4-final}
\end{align}
Collecting \eqref{Z1final}, \eqref{Z2-final}, \eqref{Z3-final} and \eqref{Z4-final}, we eventually infer that
$$
I_2\leq \frac18 \| \nabla \vp\|^2+
C  \Big( \| \uu_2\|^2 + 
\| \vp_1\|_{H^2(\Omega)}^4+
\| \vp_2\|_{H^2(\Omega)}^4  +
+ \| \nabla \vp_1\|_{L^\infty(\Omega)}^{\frac{2r}{r-1}}+ 
\| \nabla \vp_2\|_{L^\infty(\Omega)}^{\frac{2r}{r-1}} \Big) \|\vp \|_{V_0'}^2.
$$
Finally, we arrive at the differential inequality
\begin{equation}
\label{diffinequniq}
\frac12 \ddt \| \vp\|_{V_0'}^2 + \frac18 \| \nabla \vp\|^2 
\leq \Lambda \| \vp\|_{V_0'}^2,
\end{equation}
where
$$
\Lambda=C_R \Big( 1+ \| \uu_1\|^2 + \| \uu_2\|^2 
+\| \vp_1\|_{H^2(\Omega)}^4
+\| \vp_2\|_{H^2(\Omega)}^4
+ \| \nabla \vp_1\|_{L^\infty(\Omega)}^{\frac{2r}{r-1}}+ 
\| \nabla \vp_2\|_{L^\infty(\Omega)}^{\frac{2r}{r-1}}\Big).
$$
In order to conclude the proof via the Gronwall lemma, we need to show that $\Lambda\in L^1(0,T)$. In light of the regularity of weak solutions (cf. Theorem \ref{weak-theorem}), we are only left to prove that $\nabla \vp_i \in L^{\frac{2r}{r-1}}(0,T;L^\infty(\Omega))$, $i=1,2$. We recall that, for any $\sigma \in (0,1)$,
$\log(1+Cs)\leq (1+Cs)^\sigma$, for all $s\geq 0$.
Taking $\sigma= \frac{r-2}{r}$ in this inequality, we apply the 
Br\'{e}zis-Gallouet-Wainger inequality \eqref{BWd2} for some $p>2$ and the Young's inequality
\begin{align*}
 \| \nabla \vp_i\|_{L^\infty(\Omega)}^{\frac{2r}{r-1}}
 &\leq C \| \vp_i\|_{H^2(\Omega)}^{\frac{2r}{r-1}} \log(1+C\| \vp_i\|_{W^{2,p}(\Omega)})^\frac{r}{r-1}\\
 &\leq C \| \vp_i\|_{H^2(\Omega)}^4+ C \log(1+C\| \vp_i\|_{W^{2,p}(\Omega)})^\frac{4r}{2r-4}\\
 &\leq C \| \vp_i\|_{H^2(\Omega)}^4+ C (1+ \| \vp_i\|_{W^{2,p}(\Omega)}^2.
\end{align*}
Here we have used that $\frac{2r}{r-1}<4$ which follows from $r>2$. Thus, $\Lambda \leq \widetilde{\Lambda}$, where 
$$
\widetilde{\Lambda}\leq C_R \Big( 1+ \| \uu_1\|^2 + \| \uu_2\|^2 
+\| \vp_1\|_{H^2(\Omega)}^4
+\| \vp_2\|_{H^2(\Omega)}^4 
+ \| \vp_1\|_{W^{2,p}(\Omega)}^2
+ \| \vp_2\|_{W^{2,p}(\Omega)}^2
\Big) 
$$
and $\widetilde{\Lambda} \in L^1(0,T)$ by Theorem \ref{weak-theorem}.
An application of the Gronwall lemma to \eqref{diffinequniq} gives us
$$
\| \vp_1(t)-\vp_2(t)\|^2_{V_0'}\leq \| \vp_1(0)-\vp_2(0)\|^2_{V_0'} \mathrm{e}^{C_R \int_0^t \widetilde{\Lambda}(\tau) \, \d \tau},
$$
which implies the desired conclusion.
\end{proof}

\begin{remark}
The additional regularity condition $\vp_1\in L^\infty(0,T;W^{1,r}(\Omega))$, for some $r>2$,
has been only used to control $Z_1$. In particular, it played a crucial role to deduce the estimate \eqref{estqH2}. We observe that different criteria involving the $L^p(0,T;H^2(\Omega))$-norm of $\vp_1$ (with $p>4$ large) might be formulated in accordance with Theorem \ref{NP-nc} to control the $H^2$-norm of $q$ in \eqref{q-def}. However, it remains an open question whether an argument similar to the one employed in \cite[Theorem 3.1]{GMT2018} can be adapted for the HSCH system. 
\end{remark}

%\begin{remark}
%As a consequence of Theorem \ref{weak-strong} we deduce the weak-strong uniqueness property for the HSCH system with logarithmic potential. That is, given a weak solution and strong solution (see Section \ref{STRONGSOLUTIONS} for the existence) originating from the same initial datum, they do coincide.   
%\end{remark}

\subsection{Uniqueness for the HSCH system with regular potentials}

\noindent
Let us consider the HSCH system with polynomial (regular) potential $\Psi_0(s)= (s^2-1)^2$ for all $s\in \mathbb{R}$. We report a result concerning the existence of global weak solution proved in \cite[Theorem 3.1]{DGL}.

\begin{theorem}
\label{existence-regular}
Let $d=2$ and $\vp_0 \in V$. Assume that (A1) holds. For any $T>0$, there exists at least one weak solution $(\u, p, \vp)$ to problem \eqref{CHHS}-\eqref{bdini} with $\Psi_0(s)= (s^2-1)^2$ on $[0,T]$ such that 
\begin{align*}
&\u \in L^2(0,T; \H_\sigma), \quad p \in L^q(0,T;V_0),\\
&\vp \in \mathcal{C}([0,T],V)\cap L^2(0,T;H^3(\Omega))\cap W^{1,q}(0,T;V'),\\
&\mu\in L^2(0,T;V),
\end{align*}
for any $\frac43\leq q<2$, and satisfies
\begin{align*}
&\nu(\vp)\uu+ \nabla p = \mu \nabla \vp, \quad \mu=-\Delta\vp+\Psi_0'(\vp), \quad &&\text{ a.e. }(x,t) \in \Omega\times(0,T),\\
&\l\partial_t \vp, v\r + ( \uu \cdot \nabla \vp,v )+  (\nabla \mu,\nabla v)=0,
\quad  && \forall \, v\in V, \ \text{  a.e. } t\in(0,T).
\end{align*}
Moreover, $\partial_\n \vp=0$ almost everywhere on $\partial \Omega \times (0,T)$ and $\vp(\cdot,0)=\vp_0$ in $\Omega$.
\end{theorem}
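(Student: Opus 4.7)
The plan is to construct weak solutions via a Faedo--Galerkin approximation on the Cahn--Hilliard unknown, coupled at each level with a direct resolution of Darcy's law. Let $\{w_j\}_{j\geq 1}$ denote the eigenfunctions of the Neumann Laplacian (orthonormal in $H$, orthogonal in $V$) and set $V_n = \mathrm{span}\{w_1,\ldots,w_n\}$. I would look for $\vp_n(t) = \sum_{j=1}^n c_j^n(t) w_j$ satisfying the Galerkin projection of \eqref{CHHS} with $\Psi$ replaced by $\Psi_0$, where at each time the couple $(\uu_n, p_n)$ is reconstructed from the Darcy relation by solving the Neumann problem $-\mathrm{div}(\nu(\vp_n)^{-1}\nabla p_n) = -\mathrm{div}(\nu(\vp_n)^{-1}\mu_n\nabla\vp_n)$ via Theorem \ref{NP-nc} and then setting $\uu_n = \nu(\vp_n)^{-1}(\mu_n\nabla\vp_n - \nabla p_n)$, with $\mu_n = -\Delta\vp_n + \Psi_0'(\vp_n)$ (projected). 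Since the right-hand side of the resulting ODE for $c_j^n(t)$ is locally Lipschitz in the coefficients, standard ODE theory yields local-in-time solvability.

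The key a priori bound is the discrete analogue of the energy identity \eqref{BEL}. Testing the projected Cahn-Hilliard equation with $\mu_n$ and using $(\uu_n,\nabla p_n)=0$ together with Darcy to rewrite $(\uu_n\cdot\nabla\vp_n,\mu_n)=(\uu_n,\mu_n\nabla\vp_n)=\|\sqrt{\nu(\vp_n)}\uu_n\|^2$, one obtains
\begin{equation*}
\vp_n \in L^\infty(0,T;V), \quad \uu_n \in L^2(0,T;\H_\sigma), \quad \nabla\mu_n \in L^2(0,T;H).
\end{equation*}
Poincaré's inequality combined with mass conservation upgrades $\mu_n$ to $L^2(0,T;V)$. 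Since $\Psi_0'(s)=4s^3-4s$ is uniformly bounded in $L^\infty(0,T;H)$ (by $V\hookrightarrow L^6$ in two dimensions), elliptic regularity applied to $-\Delta\vp_n = \mu_n - \Psi_0'(\vp_n)$ with homogeneous Neumann boundary condition gives $\vp_n \in L^2(0,T;H^2(\Omega))$. A second bootstrap using $-\nabla\Delta\vp_n = \nabla\mu_n - \Psi_0''(\vp_n)\nabla\vp_n$, whose right-hand side is in $L^2(0,T;H)$ (since $\Psi_0''(\vp_n)$ is bounded in $L^\infty(0,T;L^p)$ for all $p<\infty$ while $\nabla\vp_n \in L^2(0,T;L^p)$), produces $\vp_n \in L^2(0,T;H^3(\Omega))$.

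For the pressure, the Darcy relation gives $\nabla p_n = \mu_n\nabla\vp_n - \nu(\vp_n)\uu_n$; applying \eqref{L} together with interpolation between $L^\infty(0,T;V)$ and $L^2(0,T;H^3)$ for $\vp_n$ and $L^2(0,T;V)$ for $\mu_n$ shows that $\mu_n\nabla\vp_n$ is uniformly bounded in $L^q(0,T;H)$ for every $q\in [\tfrac{4}{3},2)$, whence $p_n \in L^q(0,T;V_0)$. For the time derivative, testing the Cahn-Hilliard equation against $v\in V$ yields
\begin{equation*}
|\langle \partial_t\vp_n,v\rangle|\leq \|\uu_n\|\|\nabla\vp_n\|_{L^4}\|v\|_{L^4} + \|\nabla\mu_n\|\|\nabla v\|,
\end{equation*}
and since $\nabla\vp_n \in L^2(0,T;L^\infty)$ by the embedding $H^2\hookrightarrow W^{1,\infty}$... actually $H^3\hookrightarrow W^{1,\infty}$ in 2D, the above gives $\partial_t\vp_n \in L^q(0,T;V')$ in the same range of $q$.

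The hard part will be the passage to the limit in the quadratic nonlinearities. Aubin-Lions compactness applied to the uniform bounds $\vp_n \in L^2(0,T;H^3)\cap W^{1,q}(0,T;V')$ delivers, up to a subsequence, strong convergence of $\vp_n$ in $L^2(0,T;H^{3-\ve})$ and of $\nabla\vp_n$ in $L^2(0,T;L^p(\Omega))$ for every $p<\infty$. Combined with weak convergence of $\uu_n$ in $L^2(0,T;\H_\sigma)$, $\mu_n$ in $L^2(0,T;V)$ and the continuity of $\nu$ and polynomial growth of $\Psi_0'$, this suffices to identify the limits of $\nu(\vp_n)\uu_n$, $\mu_n\nabla\vp_n$, $\uu_n\cdot\nabla\vp_n$ and $\Psi_0'(\vp_n)$ and thereby recover \eqref{weak1}-\eqref{weak2} with the stated regularity. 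The attainment of the initial condition and mass conservation follow from $\vp_n\to\vp$ in $\mathcal{C}([0,T],V')$.
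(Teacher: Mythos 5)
The paper does not actually prove Theorem \ref{existence-regular}: it is quoted verbatim from \cite[Theorem 3.1]{DGL}, so there is no internal proof to compare against. Your Faedo--Galerkin construction (spectral basis of the Neumann Laplacian for $\vp_n$, Darcy's law resolved at each level through the non-constant-coefficient Neumann problem of Theorem \ref{NP-nc}, energy identity via the cancellation $(\uu_n,\nabla p_n)=0$, elliptic bootstrap to $H^2$ and then $H^3$ using the polynomial growth of $\Psi_0'$, and Aubin--Lions compactness) is the standard and correct route to this statement, and the scheme is coherent: in particular, defining $\mu_n=-\Delta\vp_n+P_n\Psi_0'(\vp_n)$ keeps the energy identity exact and still permits the elliptic bootstrap, since $P_n$ is uniformly bounded on $H$ and on $V$.

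One justification as written does not deliver what you claim. For the time derivative you invoke $\nabla\vp_n\in L^2(0,T;L^\infty(\Omega))$ (from $\vp_n\in L^2(0,T;H^3)$) to bound $\uu_n\cdot\nabla\vp_n$; but pairing this with $\uu_n\in L^2(0,T;\mathbf{H})$ only gives $\uu_n\cdot\nabla\vp_n\in L^1(0,T;H)$, hence $\partial_t\vp_n\in L^1(0,T;V')$, which is not enough for $W^{1,q}$ with $q\ge\tfrac43$. The correct route is the same interpolation you use for the pressure: estimate $|(\uu_n\cdot\nabla\vp_n,v)|\le\|\uu_n\|\,\|\nabla\vp_n\|_{L^{p'}(\Omega)}\|v\|_{L^{p''}(\Omega)}$ with $\tfrac12+\tfrac1{p'}+\tfrac1{p''}=1$, and use \eqref{GN2} on $\nabla\vp_n\in L^\infty(0,T;H)\cap L^2(0,T;\mathbf{V})$ to place $\nabla\vp_n$ in $L^{2p'/(p'-2)}(0,T;L^{p'}(\Omega))$; letting $p'\downarrow 2$ covers every $q<2$, and $p'=4$ gives the endpoint $q=\tfrac43$. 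Two smaller points you should make explicit: global (not merely local) solvability of the Galerkin ODE follows because $\Psi_0\ge 0$ makes the energy control $\|\vp_n\|_V$, hence the coefficients $c_j^n$; and the claimed continuity $\vp\in\mathcal{C}([0,T],V)$ (strong, not just weak) requires combining weak continuity with the energy identity in the limit, which your argument yields but does not state.
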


\noindent
Two differences must be pointed out between the polynomial potential case and the logarithmic potential case. First, the highest regularity in space in the former case is $\vp \in L^2(0,T;H^3(\Omega))$, whereas in the latter case we only reach $\vp \in L^2(0,T;W^{2,p}(\Omega))$, for any $p\geq 2$ (in two dimensions). 
Second, and more importantly, we can control the second derivative $\Psi_0''(\vp)$ in terms of $L^p$-norms of $\vp$. Thus, 
it is possible to control the difference of two solutions in $L^2$, in contrast to the estimate in the dual space $V'$ for the logarithmic potential case.   

\begin{theorem}
\label{RU}
Let $d=2$ and $\vp_{01}, \vp_{02}$ be such that $\vp_{0i}\in V$, $i=1,2$, and $\overline{\vp}_{01}=\overline{\vp}_{02}$. Assume that 
$(\u_1,p_1,\vp_1)$ and $(\u_2,p_2,\vp_2)$ are two solutions given by Theorem \ref{existence-regular} with initial data 
$\vp_{01}$ and $\vp_{02}$, respectively. Then, there exists a positive constant $C=C(T)$ such that
$$
\| \vp_1(t)-\vp_2(t)\|\leq C \| \vp_{01}-\vp_{02}\|, \quad 
\forall \, t\in [0,T]. 
$$
In particular, the weak solution to system \eqref{CHHS}-\eqref{bdini} (with polynomial potential) is unique.
\end{theorem}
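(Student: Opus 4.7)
The plan is to follow the template of Theorem \ref{weak-strong} with one crucial simplification afforded by the polynomial potential: the quadratic Lipschitz bound
\begin{equation*}
|\Psi_0'(s_1)-\Psi_0'(s_2)|\leq C(1+s_1^2+s_2^2)|s_1-s_2|
\end{equation*}
permits control of the difference of two solutions directly in $L^2(\Omega)$, rather than in $V_0'$ as in the logarithmic case. Accordingly, I would test the equation for $\vp:=\vp_1-\vp_2$ with $\vp$ itself, not with $\An\vp$.

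Setting also $\uu:=\uu_1-\uu_2$, $p:=p_1-p_2$, and $\mu:=\mu_1-\mu_2=-\Delta\vp+\Psi_0'(\vp_1)-\Psi_0'(\vp_2)$, and noting that $\overline{\vp}(t)=0$ by mass conservation, testing the difference of the Cahn-Hilliard equations with $v=\vp$ and using $\mathrm{div}\,\uu_1=0$, $\uu_1\cdot\n=0$, and $\partial_\n\vp=0$ would yield
\begin{equation*}
\frac12\ddt\|\vp\|^2+\|\Delta\vp\|^2=\bigl(\Psi_0'(\vp_1)-\Psi_0'(\vp_2),\Delta\vp\bigr)-(\uu\cdot\nabla\vp_2,\vp).
\end{equation*}
The potential contribution is absorbed as $\tfrac14\|\Delta\vp\|^2+\Lambda_1(t)\|\vp\|^2$ with $\Lambda_1\in L^1(0,T)$, by combining the quadratic bound above, Agmon's inequality \eqref{Ad2}, and the regularity $\vp_i\in L^\infty(0,T;V)\cap L^2(0,T;H^3(\Omega))$ from Theorem \ref{existence-regular}.

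The decisive step is the convective term. Taking the difference of the two Darcy laws,
\begin{equation*}
\nu(\vp_1)\uu+\nabla p=\mu\nabla\vp_1+\mu_2\nabla\vp+\bigl(\nu(\vp_1)-\nu(\vp_2)\bigr)\uu_2,
\end{equation*}
I would test with $\uu\in\H_\sigma$ (making $\nabla p$ disappear) and estimate by H\"older: for $(\mu\nabla\vp_1,\uu)$ use $\|\mu\|\leq\|\Delta\vp\|+C(1+\|\vp_1\|_{L^\infty(\Omega)}^2+\|\vp_2\|_{L^\infty(\Omega)}^2)\|\vp\|$ together with $\nabla\vp_1\in L^2(0,T;L^\infty(\Omega))$ (from $H^3\hookrightarrow W^{1,\infty}$ in dimension two); for $(\mu_2\nabla\vp,\uu)$, rewrite $\mu_2\nabla\vp=\nabla(\mu_2\vp)-\vp\nabla\mu_2$ and use $\uu\in\H_\sigma$ to convert it into $-(\vp\nabla\mu_2,\uu)$, then apply Ladyzhenskaya's inequality \eqref{L}; for the viscosity term use (A1) and \eqref{L}. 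Dividing through by $\|\uu\|$ yields a bound of the form $\|\uu\|\lesssim \Lambda_2(t)\|\vp\|+\Lambda_3(t)\|\vp\|^{1/2}\|\Delta\vp\|^{1/2}$. Inserting this into $|(\uu\cdot\nabla\vp_2,\vp)|\leq\|\uu\|\|\nabla\vp_2\|_{L^4(\Omega)}\|\vp\|_{L^4(\Omega)}$, using Ladyzhenskaya once more and $\|\nabla\vp\|\leq\|\vp\|^{1/2}\|\Delta\vp\|^{1/2}$ (valid because $\overline{\vp}=0$ and $\partial_\n\vp=0$), then invoking Young's inequality to absorb every remaining $\|\Delta\vp\|^2$ contribution into the left-hand side, produces
\begin{equation*}
\ddt\|\vp\|^2+\|\Delta\vp\|^2\leq\Lambda(t)\|\vp\|^2,\qquad \Lambda\in L^1(0,T),
\end{equation*}
and Gronwall's lemma concludes.

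The principal obstacle is the bookkeeping ensuring $\Lambda\in L^1(0,T)$; this relies essentially on the stronger spatial regularity $\vp_i\in L^2(0,T;H^3(\Omega))$ available for the polynomial potential, which in turn gives $\nabla\vp_i\in L^2(0,T;L^\infty(\Omega))$ in two dimensions. This is exactly the extra control that the logarithmic potential fails to provide (compare the relations \eqref{growth}), which is why Theorem \ref{weak-strong} was forced into the weaker $V_0'$-framework under an additional regularity assumption on $\vp_1$.
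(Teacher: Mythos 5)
Your proposal is correct and follows essentially the same route as the paper: test the difference of the Cahn--Hilliard equations with $\vp$ in $L^2(\Omega)$, bound $\|\uu_1-\uu_2\|$ by testing the difference of the Darcy laws with $\uu_1-\uu_2$, and close with Gronwall; the only (cosmetic) difference is that the paper absorbs $\nabla\bigl(\Psi_0(\vp_1)-\Psi_0(\vp_2)\bigr)$ into a modified pressure and moves derivatives via $(-\Delta\vp_1\nabla\vp,\uu)=(\vp\nabla\Delta\vp_1,\uu)$, where you instead use $(\mu_2\nabla\vp,\uu)=-(\vp\nabla\mu_2,\uu)$. Note only that $(\mu\nabla\vp_1,\uu)$ contributes a full power $\|\nabla\vp_1\|_{L^\infty(\Omega)}\|\Delta\vp\|$ to the bound on $\|\uu\|$ rather than the half power you record, but since $\|\nabla\vp_1\|_{L^\infty(\Omega)}\leq C\|\vp_1\|_{H^3(\Omega)}^{1/2}\in L^4(0,T)$ by Agmon's inequality \eqref{Ad2} and interpolation, Young's inequality still absorbs it --- exactly as the paper handles its own term $C\|\vp_2\|_{H^3(\Omega)}^{1/2}\|\Delta\vp\|$.
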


\begin{proof}
Let us consider the difference of two solutions $\uu=\uu_1-\uu_2$, $ p=p_1-p_2$ and $ \vp=\vp_1-\vp_2$. We have
\begin{equation}
\label{diff-problem-reg}
\l \partial_t  \vp, v\r - ( \u_1 \vp, \nabla v )
- (  \u \vp_2,\nabla v ) +
( \nabla  \mu,\nabla v )=0, \quad \forall\, v\in V,
\end{equation}
for almost every $t\in (0,T)$, where
\begin{align}
\label{diff-u-reg}
\nu(\vp_1) \uu+ \nabla \widetilde{p}=- \Delta \vp_1 \nabla \vp-\Delta \vp \nabla \vp_2- (\nu(\vp_1)-\nu(\vp_2))\uu_2,
\end{align}
and 
\begin{align}
\label{diff-mu-reg}
&\mu= - \Delta \vp+ \Psi_0'(\vp_1)-\Psi_0'(\vp_2).
\end{align}
Here $\widetilde{p}=p+\Psi_0(\vp)$ and $\mu=\mu_1-\mu_2$. According to Remark \ref{equiv}, it is easily seen that $\widetilde{p}$ has the same regularity properties of $p$ and $p^\ast$. We observe that
$\overline{\vp}=0$ for all $t\in [0,T]$. Moreover, we have 
\begin{equation}
\label{est-sol-reg}
\| \vp_i\|_{L^\infty(0,T;V)}\leq C_0, \quad i=1,2.
\end{equation}
Taking $v=\vp$ in \eqref{diff-problem-reg}, and using the chain rule in $L^{q'}(0,T;V)\cap W^{1,q}(0,T;V)$, where $\frac{1}{q'}+\frac{1}{q}=1$, we obtain
$$
\frac12 \ddt \| \vp\|^2 + (\nabla \mu, \nabla \vp)= J_1+J_2,
$$
having set
$$
J_1= (\u_1 \vp, \nabla \vp),\quad 
J_2=(\u \vp_2, \nabla \vp).
$$
We report the basic estimates
\begin{equation}
\label{H1-inter}
\| \nabla \vp\|\leq \| \vp\|^\frac12 \|\Delta \vp \|^\frac12,\quad \| \vp\|_{H^2(\Omega)}\leq C\| \Delta \vp\|.
\end{equation}
By using the Sobolev embedding $V\hookrightarrow L^6(\Omega)$, the form of $\Psi_0$, the estimates \eqref{est-sol-reg} and \eqref{H1-inter},  and Young's inequality, we have
\begin{align*}
(\nabla \mu, \nabla \vp)&= \| \Delta \vp\|^2- (\Psi_0'(\vp_1)-\Psi_0'(\vp_2), \Delta \vp)\\
&\geq \| \Delta \vp\|^2- C \big( \| \Psi_0''(\vp_1)\|_{L^3(\Omega)}+ \| \Psi_0''(\vp_1)\|_{L^3(\Omega)}\big) \| \vp\|_{L^6(\Omega)} \| \Delta \vp\|\\
&\geq \| \Delta \vp\|^2- C \| \vp\|^\frac12 \| \Delta \vp\|^\frac32\\
&\geq \frac12 \| \Delta \vp\|^2-C \| \vp\|^2.
\end{align*}
By \eqref{L}, \eqref{Ad2} and \eqref{H1-inter}, we deduce that
\begin{align*}
J_1&\leq \| \u_1 \| \| \vp\|_{L^4(\Omega)}
\| \nabla \vp\|_{L^4(\Omega)}\\
&\leq C \| \u_1 \| \|\vp \| \| \Delta \vp\|  \\
&\leq \frac18 \| \Delta \vp\|^2+ C \| \u_1\|^2 \| \vp\|^2,
\end{align*}
and 
\begin{align*}
J_2&= -(\uu \cdot \nabla \vp_2, \vp) \\
&\leq C\| \u\| \| \vp\|_{L^\infty(\Omega)}\\
&\leq C \| \u\| \| \vp\|^\frac12 \| \Delta \vp\|^\frac12.
\end{align*}
Now we multiply \eqref{diff-u-reg} by $\u$ and we integrate over $\Omega$. Noticing that 
$(-\Delta \vp_1 \nabla \vp,\u)= (\vp \nabla \Delta \vp_1, \u)$, we obtain
\begin{align*}
\nu_\ast \| \u\|^2 &\leq (\vp \nabla \Delta \vp_1, \u)
-(\Delta \vp \nabla \vp_2, \u)- ((\nu(\vp_1)-\nu(\vp_2))\u_2, \u)\\
&\leq \| \vp\|_{L^\infty(\Omega)} \|\nabla \Delta \vp_1 \| \| \u\| 
+ \| \Delta \vp\| \| \nabla \vp_2\|_{L^\infty(\Omega)} \| \u\|+ C \| \vp\|_{L^\infty(\Omega)}
\|\u_2\| \| \u\|.
\end{align*}
Hence, by \eqref{Ad2} and \eqref{est-sol-reg}, we eventually find
$$
\| \u\|\leq C \Big( \| \vp_1\|_{H^3(\Omega)}+ \| \u_2\| \Big) \| \vp\|^\frac12 \| \Delta \vp\|^\frac12+ C \| \vp_2\|_{H^3(\Omega)}^\frac12 
\| \Delta \vp\|. 
$$
Combining the above estimates and using Young's inequality, we deduce that
\begin{align*}
J_2&\leq C \Big( \| \vp_1\|_{H^3(\Omega)}+ \| \u_2\| \Big) \| \vp\|\| \Delta \vp\|+ C \| \vp_2\|_{H^3(\Omega)}^\frac12 
\| \vp\|^\frac12 \| \Delta \vp\|^\frac32\\
&\leq \frac18 \| \Delta \vp\|^2 +C \Big( \| \vp_1\|_{H^3(\Omega)}^2+  \| \vp_2\|_{H^3(\Omega)}^2+\| \u_2\|^2 \Big) \| \vp\|^2.
\end{align*} 
We finally end up with the differential inequality
$$
\frac12 \ddt \| \vp\|^2 +  \frac14 \| \Delta \vp\|^2 \leq C
\Big( 1+\| \vp_1\|_{H^3(\Omega)}^2+  \| \vp_2\|_{H^3(\Omega)}^2
+\| \u_1\|^2 +\| \u_2\|^2 \Big) \| \vp\|^2. 
$$ 
In light of the regularity $\vp_i \in L^2(0,T;H^3(\Omega))$, 
$\u_i \in L^2(0,T;\mathbf{H}_\sigma)$, for $i=1,2$, the claim easily follows from the Gronwall lemma. 
\end{proof}

%\begin{remark}
%Te proof of Theorem \ref{RU} can 
%be easily adapted to more general polynomial functions 
%$\Psi_0\in C^2(\mathbb{R})$ such that 
%$\liminf_{|s|\rightarrow \infty} \Psi'(s)>0$, and $|\Psi_0''(s)|\leq C(1+|s|^p)$, for all $s\in \mathbb{R}$ and for some $C>0$, $p\geq 1$.
%\end{remark}

\section{Global Strong Solutions in Two Dimensions}
\label{S5}
\setcounter{equation}{0}

This section is devoted to the existence and uniqueness of global strong solutions for the HSCH system with unmatched viscosities and logarithmic potential in dimension two. 

\begin{theorem}
\label{strong2d}
Let $d=2$, $\vp_0 \in H^2(\Omega)$ such that $\| \vp_0\|_{L^\infty(\Omega)}\leq 1$, $\overline{\vp}_0=m \in (-1,1)$, $\widetilde{\mu}_0=-\Delta \vp_0+F'(\vp_0) \in V$ and $\partial_\n \vp_0=0$ on $\partial \Omega$. Assume that (A1)-(A3) hold.
For any $T>0$, there exists a unique strong solution to \eqref{CHHS}-\eqref{bdini} on $[0,T]$ such that 
\begin{align*}
&\uu \in L^\infty(0,T;\H_\sigma\cap \mathbf{V}), \quad p\in L^\infty(0,T;H^2(\Omega))\\
&\vp \in L^\infty(0,T;W^{2,p}(\Omega))\cap H^1(0,T;V),\\
&\vp \in L^{\infty}(\Omega \times (0,T)) \text{ with } |\vp(x,t)|<1 \text{ a.e. }(x,t) \in \Omega\times(0,T),\\
&\mu \in L^{\infty}(0,T;V)\cap L^2(0,T;H^3(\Omega))\cap H^1(0,T;V'),\\
&\Psi''(\vp)\in L^\infty(0,T;L^p(\Omega)),
\end{align*}
for any $2\leq p<\infty$. The strong solution satisfies \eqref{CHHS} for almost every $(x,t)\in \Omega \times (0,T)$ and \eqref{bdini} for almost every $(x,t)\in \partial \Omega \times (0,T)$, and $\vp(\cdot,0)=\vp_0(\cdot)$ in $\Omega$.
In addition, given two strong solutions $(\uu_1,p_1,\vp_1)$ and $(\uu_2,p_2,\vp_2)$ on $[0,T]$ with initial data $\vp_{01}$ and $\vp_{02}$ satisfying the above assumptions, the following continuous dependence estimate holds
\begin{equation}
\label{CDstrong}
\| \vp_1(t)-\vp_2(t)\|\leq C \| \vp_{01}-\vp_{02}\|, \quad \forall \, t \in [0,T],
\end{equation}
where the constant $C>0$  depends on $T$ and the norms of the initial data. 
\end{theorem}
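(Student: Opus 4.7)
The plan is to build strong solutions via a regularization of the potential and pass to the limit using a priori estimates, then recover uniqueness and continuous dependence from the weak-strong framework already established. As a first step, I would replace $F$ with a globally defined smooth convex approximation $F_\varepsilon$ (for instance, a truncated polynomial that coincides with $F$ on $[-1+\varepsilon,1-\varepsilon]$ and grows quadratically outside), and apply the existence theory of Theorem \ref{existence-regular} plus the ideas of \cite{GGW2018} to obtain an approximate solution $(\uu_\varepsilon,p_\varepsilon,\vp_\varepsilon)$ with the regularity needed for the manipulations below. The basic energy identity \eqref{BEL} gives uniform bounds on $\vp_\varepsilon$ in $L^\infty(0,T;V)$, on $\uu_\varepsilon$ in $L^2(0,T;\H_\sigma)$ and on $\mu_\varepsilon$ in $L^2(0,T;V)$; these will be the starting point for a bootstrap in two dimensions.

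The key higher-order estimate is obtained by time-differentiating the Cahn-Hilliard equation and testing with $\An \partial_t\vp_\varepsilon$ (which makes sense since mass conservation gives $\overline{\partial_t\vp_\varepsilon}=0$). Using $\partial_t\mu_\varepsilon=-\Delta\partial_t\vp_\varepsilon+F_\varepsilon''(\vp_\varepsilon)\partial_t\vp_\varepsilon-\theta_0\partial_t\vp_\varepsilon$ and the homogeneous Neumann condition on $\partial_t\vp_\varepsilon$, integration by parts and the convexity of $F_\varepsilon$ produce the differential identity
\begin{equation*}
\tfrac12\ddt\|\partial_t\vp_\varepsilon\|_{V_0'}^2+\|\nabla\partial_t\vp_\varepsilon\|^2+\bigl(F_\varepsilon''(\vp_\varepsilon)\partial_t\vp_\varepsilon,\partial_t\vp_\varepsilon\bigr)=\theta_0\|\partial_t\vp_\varepsilon\|_{V_0'}^{\,2\ast}+\mathcal{R}_\varepsilon,
\end{equation*}
where $\mathcal{R}_\varepsilon$ collects the terms coming from the convective contribution $\partial_t(\uu_\varepsilon\cdot\nabla\vp_\varepsilon)$ tested against $\An\partial_t\vp_\varepsilon$. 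The non-trivial task is to control $\mathcal{R}_\varepsilon$ in terms of quantities already bounded; this is where the unmatched viscosity makes life harder. I would avoid differentiating Darcy's law directly and instead rewrite $\partial_t\uu_\varepsilon$ via the elliptic problem that defines it: differentiating \eqref{weak1} in time and dividing by $\nu(\vp_\varepsilon)$ produces a Neumann problem for $\partial_t p_\varepsilon$ with right-hand side involving $\partial_t\mu_\varepsilon\nabla\vp_\varepsilon$, $\mu_\varepsilon\nabla\partial_t\vp_\varepsilon$ and $\nu'(\vp_\varepsilon)\partial_t\vp_\varepsilon\uu_\varepsilon$, which can be estimated using Theorem \ref{NP-nc} provided $\vp_\varepsilon\in W^{1,r}(\Omega)$ with $r>2$.

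To close the loop, I would combine this with two other ingredients. First, Theorem \ref{ell2} applied to $-\Delta\vp_\varepsilon+F_\varepsilon'(\vp_\varepsilon)=\mu_\varepsilon+\theta_0\vp_\varepsilon$ converts a control of $\mu_\varepsilon$ in $V$ into a control of $\vp_\varepsilon$ in $W^{2,p}(\Omega)$ for any $p<\infty$ (in $d=2$), with the bound $\|\Delta\vp_\varepsilon\|\leq\|\nabla\vp_\varepsilon\|^{1/2}\|\nabla(\mu_\varepsilon+\theta_0\vp_\varepsilon)\|^{1/2}$ providing the $H^2$ estimate. Second, taking the two-dimensional curl of \eqref{weak1-2} yields
\begin{equation*}
\nu(\vp_\varepsilon)\,\mathrm{curl}\,\uu_\varepsilon=-\nu'(\vp_\varepsilon)\nabla\vp_\varepsilon^\perp\cdot\uu_\varepsilon+\nabla\mu_\varepsilon\cdot\nabla\vp_\varepsilon^\perp,
\end{equation*}
so that \eqref{rot} gives $\|\uu_\varepsilon\|_V\leq C(\|\nabla\mu_\varepsilon\|\,\|\nabla\vp_\varepsilon\|_{L^\infty}+\|\nabla\vp_\varepsilon\|_{L^\infty}\|\uu_\varepsilon\|+\|\uu_\varepsilon\|)$. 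Then the Brézis-Gallouet-Wainger inequality \eqref{BWd2} estimates $\|\nabla\vp_\varepsilon\|_{L^\infty}$ by $\|\vp_\varepsilon\|_{H^2}\bigl(1+\log(1+\|\vp_\varepsilon\|_{W^{2,p}})\bigr)^{1/2}$, turning the differential identity above into a logarithmic Gronwall inequality of the form $y'\leq C(1+y)\log(e+y)$ with $y\sim \|\partial_t\vp_\varepsilon\|_{V_0'}^2+\|\nabla\mu_\varepsilon\|^2$, which has global-in-time solutions. I expect this closing step to be the main obstacle, since every occurrence of $\|\nabla\vp_\varepsilon\|_{L^\infty}$ must be reabsorbed with a logarithmic factor and the presence of $\nu(\vp_\varepsilon)$ multiplies the number of cross-terms compared with the matched-viscosity case of \cite{GGW2018}.

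Given the uniform bounds, a standard compactness and lower semicontinuity argument passes $\varepsilon\to0$ and recovers the regularity stated for $(\uu,p,\vp,\mu)$; strict separation $|\vp|<1$ a.e. follows because $F_\varepsilon'(\vp_\varepsilon)$ is uniformly bounded in $L^\infty(0,T;L^p)$ through the third item of Theorem \ref{ell2}. Uniqueness is automatic: any strong solution belongs to $L^\infty(0,T;W^{1,r}(\Omega))$ for all $r<\infty$, so Theorem \ref{weak-strong} applies and yields uniqueness even in the larger weak class, together with a $V_0'$ continuous dependence estimate. Finally, the $L^2$ continuous dependence \eqref{CDstrong} is obtained either by interpolating the $V_0'$ bound from Theorem \ref{weak-strong} with the uniform $H^2$ bound on $\vp_1-\vp_2$ from the regularity just proved, or by running directly the energy estimate of Theorem \ref{RU}, now permissible because the strict separation of strong solutions makes $\Psi''(\vp_i)$ bounded in $L^\infty$ and eliminates the obstruction that forced the $V_0'$-based argument in Theorem \ref{weak-strong}.
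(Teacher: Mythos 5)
Your overall architecture (regularize the potential, energy/elliptic/vorticity estimates with Br\'ezis--Gallouet--Wainger, a logarithmic Gronwall argument, compactness, then uniqueness via Theorem \ref{weak-strong} and an $L^2$ energy estimate for continuous dependence) matches the paper. But the central higher-order estimate is set up in a way that does not close. You test the time-differentiated Cahn--Hilliard equation with $\An\partial_t\vp_\varepsilon$, so your remainder $\mathcal{R}_\varepsilon$ contains $(\vp_\varepsilon\,\partial_t\uu_\varepsilon,\nabla\An\partial_t\vp_\varepsilon)$, and your plan for $\partial_t\uu_\varepsilon$ is to substitute the time-differentiated Darcy's law, whose right-hand side contains $\partial_t\mu_\varepsilon\nabla\vp_\varepsilon$. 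Since $\partial_t\mu_\varepsilon=-\Delta\partial_t\vp_\varepsilon+\Psi''_\varepsilon(\vp_\varepsilon)\partial_t\vp_\varepsilon$, this forces you to estimate either $\Delta\partial_t\vp_\varepsilon$ (one derivative more than the dissipation $\|\nabla\partial_t\vp_\varepsilon\|^2$ your identity provides) or, after integrating by parts, $F''_\varepsilon(\vp_\varepsilon)\partial_t\vp_\varepsilon$; the only uniform-in-$\varepsilon$ control on $F''_\varepsilon(\vp_\varepsilon)$ available at this stage is the $L^p$ bound of Theorem \ref{ell2}, which is \emph{exponential} in $\|\mu_\varepsilon\|_V^2$ and therefore destroys any Gronwall structure. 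The paper avoids this entirely by choosing the functional $H=\tfrac12\|\nabla\mu_\varepsilon\|^2+\tfrac12\int_\Omega\nu(\vp_\varepsilon)|\uu_\varepsilon|^2\,\d x$: testing the Cahn--Hilliard equation with $\partial_t\mu_\varepsilon$ produces the term $-\l\uu_\varepsilon\cdot\nabla\vp_\varepsilon,\partial_t\mu_\varepsilon\r$, and the (difference-quotient) time derivative of Darcy's law tested with $\uu_\varepsilon$ produces $+\l\partial_t\mu_\varepsilon,\nabla\vp_\varepsilon\cdot\uu_\varepsilon\r$; these cancel exactly, so neither $\partial_t\mu_\varepsilon$ nor $\partial_t\uu_\varepsilon$ ever has to be estimated. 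This cancellation is the essential idea you are missing, and without it (or a substitute) your estimate of $\mathcal{R}_\varepsilon$ does not go through.

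Two secondary points. First, the closed inequality is not of the benign form $y'\leq C(1+y)\log(\mathrm{e}+y)$ you anticipate: the cross terms (e.g.\ the one coming from $\int_\Omega\nu'(\vp_\varepsilon)\partial_t\vp_\varepsilon|\uu_\varepsilon|^2$) yield $\ddt H\leq C+CH^2\log(\mathrm{e}+H)$, which is globally integrable only because the energy identity supplies the extra information $\int_0^T H\,\d\tau\leq C$, allowing the factor $CH$ to play the role of the $L^1$ weight in the generalized Gronwall lemma; you should make this explicit. Second, your justification of \eqref{CDstrong} via "strict separation of strong solutions" is not available: Theorem \ref{strong2d} gives $\Psi''(\vp)\in L^\infty(0,T;L^p(\Omega))$ for finite $p$ only (separation holds only for $t\geq 2\sigma$, Theorem \ref{regularity}), but the $L^p$ bound together with the convexity of $F''$ in (A3) is enough to run the $L^2$ estimate of Theorem \ref{RU}, which is what the paper does. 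Finally, you skip the approximation of the initial datum (truncating $\widetilde{\mu}_0$ and solving the singular Neumann problem), which is what makes $H(0)$ uniformly bounded in $\varepsilon$ and $k$; this step is needed, not optional.
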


%{\color{red}
%The proof of Theorem \ref{strong2d} is based on an approximation procedure. Similarly to the proof of a weak solution (cf. \cite{GGW2017}), we replace the singular potential with a family of regular ones. However,
%in order to apply a classical Gronwall lemma, we need to take care of the initial datum. Indeed, being $\Psi$ approximated by $\Psi_\la$, 
%we cannot control immediately the norm $\nabla \mu_\la(0)= \nabla(-\Delta \vp_0+\Psi_\la'(\vp_0))$ with $\nabla \mu(0)= \nabla(-\Delta \vp_0+\Psi'(\vp_0))$. To overcome this problem, we perform a cutoff procedure of the initial condition.
%}

\begin{proof}
The proof is carried out in several steps.
\medskip

\textbf{Step 1: Family of approximating regular potentials.}
First, we define a family of regular functions $ \lbrace \Psi_\varepsilon \rbrace$ which are defined on $\mathbb{R}$. For any $\varepsilon \in (0,1]$, we introduce 
$$
\Psi_\varepsilon(s) = F_\varepsilon (s)-\frac{\theta_0}{2}s^2, \quad \forall \, s \in \mathbb{R},
$$
where
\begin{equation}
F_\varepsilon(s)=
\begin{cases}
 \displaystyle{\sum_{j=0}^4 \frac{1}{j!}}
 F^{(j)}(1-\varepsilon) \left[s-(1-\varepsilon)\right]^j,
 \quad &\forall\,s\geq 1-\varepsilon,\\
 F(s),\quad & \forall\, s\in[-1+\varepsilon, 1-\varepsilon],\\
 \displaystyle{\sum_{j=0}^4
 \frac{1}{j!}} F^{(j)}(-1+\varepsilon)\left[ s-(-1+\varepsilon)\right]^j,
 \quad &\forall\, s\leq -1+\varepsilon.
 \end{cases}
 \label{defF1s}
\end{equation}
Under the assumption (A2), it easily follows (see, e.g., \cite{FG2012}) that there exists $\varepsilon^\ast \in (0,\kappa]$ such that, for any $\varepsilon \in (0,\varepsilon^\ast]$, the
function $F_\varepsilon \in \mathcal{C}^4(\mathbb{R})$ and fulfils the following properties
\begin{equation}
\label{Freg}
\alpha_1 s^4 -\gamma\leq F_\varepsilon(s), \quad \alpha_2\leq F_\varepsilon''(s)\leq M,\quad \forall \, s\in \mathbb{R}
\end{equation}
where $\alpha_1, \alpha_2$ and $\gamma$ are positive constants independent of $\varepsilon$, whereas $M$ depends on $\varepsilon$.
Moreover, the regularized potential $\Psi_\varepsilon \in \mathcal{C}^4(\mathbb{R})$ satisfies
\begin{equation}
\label{Freg2}
\Psi_{\varepsilon}(s)\leq \Psi(s), \quad \forall\, s\in [-1,1], \quad |\Psi'_{\varepsilon}(s)| \leq |\Psi'(s)| , \quad \forall\, s\in (-1,1).
\end{equation}
\smallskip

\textbf{Step 2: Approximation of the initial datum.}
We follow here the construction introduced in \cite[Theorem 4.1]{GMT2018}.
For $k\in \mathbb{N}$, we consider the globally Lipschitz function $h_k: \mathbb{R}\rightarrow \mathbb{R}$ such that
\begin{equation}
\label{trunc}
h_k(s)=
\begin{cases}
-k,\quad&s<-k, \\
s,\quad &s\in [-k,k],\\
k,\quad &s>k.
\end{cases}
\end{equation}
Recalling that $\widetilde{\mu}_0=-\Delta \vp_0 +F'(\vp_0)$, we consider $\widetilde{\mu}_{0}^k=h_k \circ \widetilde{\mu}_0$. Since $\widetilde{\mu}_0\in V$ and $h_k$ is Lipschitz, we infer that $\widetilde{\mu}_{0}^k \in V$, for any $k>0$, and we have 
$$
\nabla \widetilde{\mu}_{0}^k=\nabla \widetilde{\mu}_0 \cdot\chi_{[-k,k]} (\widetilde{\mu}_0),\quad \|\widetilde{\mu}_{0}^k\|_V\leq \| \widetilde{\mu}_0\|_V.
$$
In particular, we have
$\| \widetilde{\mu}_{0}^k - \widetilde{\mu}_0\|\rightarrow 0$ as 
$ k \rightarrow \infty$.
Next, for $k\in \mathbb{N}$, we consider the Neumann problem
\begin{equation}
\label{ellapp}
\begin{cases}
-\Delta \vp_{0}^k+F'(\vp_{0}^k)=\widetilde{\mu}_{0}^k,\quad &\text{ in }\Omega,\\
\partial_\n \vp_{0}^k=0, \quad &\text{ on }\partial \Omega.
\end{cases}
\end{equation}
Thanks to Theorem \ref{ell2}, there exists a 
unique solution
$\vp_{0}^k\in H^2(\Omega)$ such that
$F'(\vp_{0}^k) \in H$. The solution $\vp_{0}^k$ satisfies \eqref{ellapp} almost everywhere in $\Omega$ and $\partial_\n \vp_{0}^k=0$ on $\partial \Omega$.
In addition, we have
\begin{equation}
\label{psikH2}
\| \vp_{0}^k\|_{H^2(\Omega)}\leq C(1+\| \widetilde{\mu}_{0}\|).
\end{equation}
According to $\widetilde{\mu}_{0}^k\rightarrow \widetilde{\mu_0}$ in $H$, we observe that $\vp_{0}^k \rightarrow \vp_0$ in $V$. This implies that there exist $\overline{m}\in (0,1)$ (depending only on $m$) and $\overline{k}$ sufficiently large such that 
\begin{equation}
\label{psiH1}
\| \vp_{0}^k\|_V \leq 1+\| \vp_0\|_V,
\quad
|\overline{\vp}_{0}^k|\leq \overline{m}<1,
\quad \forall \, k> \overline{k}.
\end{equation}
Now, applying Theorem \ref{ell2} with 
$f=\widetilde{\mu}_{0}^k$, it yields
$$
\| F'(\vp_{0}^k)\|_{L^{\infty}(\Omega)}
\leq \| \widetilde{\mu}_{0}^k\|_{L^{\infty}(\Omega)}\leq k,
$$
which, in turn, implies that there exists a $\delta=\delta(k)>0$ such that
\begin{equation}
\label{psikinf}
\| \vp_{0}^k\|_{L^\infty(\Omega)}\leq 1-\delta.
\end{equation}
We notice at this point that $F'(\vp_{0}^k)\in V$, and so one can deduce that $\vp_{0}^k\in H^3(\Omega)$. Finally, since  $F(s)=F_\varepsilon(s)$ for all $s\in [-1+\varepsilon,1-\varepsilon]$,
we infer from \eqref{psikinf} that, for $\varepsilon\in (0,\overline{\varepsilon})$, where 
$\overline{\varepsilon}=\min \lbrace \frac12 \delta(k), 
\varepsilon^\ast\rbrace$,  
$$
-\Delta \vp_{0}^k+
F'_\varepsilon(\vp_{0}^k)=\widetilde{\mu}_{0}^k, 
$$
which entails 
\begin{equation}
\label{muk2H1}
\|-\Delta \vp_{0}^k+
F'_\varepsilon(\vp_{0}^k)\|_V\leq \| \widetilde{\mu}_0\|_V.
\end{equation}
\smallskip

\textbf{Step 3: Regularized problem.}
For any $k>\overline{k}$ and $\varepsilon \in (0, \overline{\varepsilon})$,
let us consider the HSCH system with regular potential $\Psi_\varepsilon$ and initial condition $\vp_0^k$.
For simplicity of notation, we will denote the solution by $(\uu_\varepsilon,p_\varepsilon,\vp_\varepsilon)$ keeping in mind the dependence on both $\varepsilon$ and $k$. The system reads as follows
\begin{equation}
\begin{cases}
\label{CHHS-reg}
\nu(\vp_\varepsilon) \uu_\varepsilon +\nabla p_\varepsilon=  \mu_\varepsilon \nabla \vp_\varepsilon,\\
\mathrm{div}\, \uu_\varepsilon=0, \\
\partial_t \vp_\varepsilon+ \uu_\varepsilon \cdot \nabla 
\vp_\varepsilon = \Delta \mu_\varepsilon,\\
\mu_\varepsilon= - \Delta \vp_\varepsilon +  \Psi'_\varepsilon(\vp_\varepsilon)
\end{cases}
 \quad \text{ in } \Omega\times (0,T),
\end{equation}
subject to the boundary and initial conditions
\begin{equation}
\label{bdini-2}
\u \cdot \n= \partial_\n \mu=\partial_\n \vp=0 \quad \text{on\ } \partial \Omega \times (0,T),\quad 
\vp( \cdot,0)=\vp_{0}^k \quad \text{in } \Omega.
\end{equation}
We recall that $\vp_{0}^k\in H^3(\Omega)$ such that $\partial_\n \vp_{0}^k=0$ on $\partial \Omega$ as defined in the previous step. Thanks to \cite[Theorem 1.1]{WW12} and \cite[Theorem 3.1]{WZ13}, there exists a global strong solution to \eqref{CHHS-reg}-\eqref{bdini-2} such that, for any $T>0$,
\begin{align}
&\uu_\varepsilon \in \mathcal{C}([0,T],\H_\sigma \cap \mathbf{V})\cap L^2(0,T;\mathbf{H}^3(\Omega)),
\label{regus}\\
& p_\varepsilon \in \mathcal{C}([0,T],V_0)\cap L^2(0,T;H^4(\Omega)),
\label{regps}\\
&\vp_\varepsilon \in \mathcal{C}([0,T],H^3(\Omega))\cap L^2(0,T;H^5(\Omega))\cap H^1(0,T; V), 
\label{regvps}\\
&\mu_\varepsilon \in \mathcal{C}([0,T],V)\cap L^2(0,T;H^3(\Omega)). \label{regmus}
\end{align}
Let us mention that \cite[Theorem 1.1]{WW12} and \cite[Theorem 3.1]{WZ13} are proven for the HSCH system \eqref{CHHS-reg} with periodic boundary conditions. It is apparent that the proof can be recasted with the above boundary conditions in a smooth bounded domain.
\smallskip

The main part of the proof is now showing global {\it a priori }estimates for the solution which are uniform with respect to the approximating parameters $k$ and $\varepsilon$. In the rest of the proof,  $C$ will denote a generic positive constant, which depends on the parameters of the system, the constants arising from embedding and interpolation results, the norm of $\| \vp_0\|_V$ and $\overline{m}$, but it is independent of $k$, $\varepsilon$ and $\| \widetilde{\mu}_0\|_V$. 
%The constants whose value also depends on $\| \widetilde{\mu}_0\|_V$ will be denoted by $\overline{C}$.
\smallskip

\textbf{Step 4: Energy estimates.} 
Integrating \eqref{CHHS-reg}$_3$ over $\Omega$, and using \eqref{psiH1}, we obtain
\begin{equation}
\label{E1}
|\overline{\vp_\varepsilon}(t)|=\Big|\frac{1}{|\Omega|}\int_{\Omega} \vp_\varepsilon (t)\, \d x \Big|=\Big|\frac{1}{|\Omega|}\int_{\Omega} \vp_0^k\, \d x\Big| \leq \overline{m}.
\end{equation}
We multiply \eqref{CHHS-reg}$_3$ by $\mu$ and \eqref{CHHS-reg}$_4$ by 
$\vp_t$. After integrating over $\Omega$, we get
$$
\ddt \E_\varepsilon(\vp_\varepsilon) + (\uu_\varepsilon\cdot \nabla \vp_\varepsilon, \mu_\varepsilon) +\| \nabla \mu_\varepsilon\|^2=0,
$$
having set
$$
\E_\varepsilon(\vp_\varepsilon)= \int_{\Omega}  \frac12 |\nabla \vp_\varepsilon|^2 +
 \Psi_\varepsilon(\vp_\varepsilon) \, \d x.
 $$
Multiplying \eqref{CHHS-reg}$_1$ by $\uu_\varepsilon$ and adding the resulting equation to \eqref{E1}, we find
$$
\ddt \E_\varepsilon(\vp_\varepsilon) + \int_{\Omega} \nu(\vp_\varepsilon) |\uu_\varepsilon|^2 + | \nabla \mu_\varepsilon|^2 \, \d x=0.
$$
After integrating on the time interval $[0,t]$, we have
\begin{equation}
\label{EI}
\E_\varepsilon(\vp_\varepsilon(t)) +\int_0^t \int_{\Omega} \nu(\vp_\varepsilon) |\uu_\varepsilon|^2 + | \nabla \mu_\varepsilon|^2 \, \d x \, \d \tau=\E_\varepsilon(\vp_0^k).
\end{equation}
By using \eqref{Freg2}, \eqref{psiH1} and \eqref{psikinf}, we notice that 
\begin{equation}
\label{EI2}
\E_\varepsilon(\vp_{0}^k)\leq C(1+\| \vp_{0}\|_V^2).
\end{equation}
According to \eqref{poincare}, \eqref{ipo-nu}, \eqref{Freg}, 
\eqref{E1} and \eqref{EI}, we deduce the bounds
\begin{equation}
\label{E2}
\| \vp_\varepsilon\|_{L^\infty(0,T;V)}\leq C,\quad
\| \nabla \mu_\varepsilon\|_{L^2(0,T;H)}\leq C,\quad
\| \uu_\varepsilon\|_{L^2(0,T;\H_\sigma)}\leq C.
\end{equation}
\smallskip

\textbf{Step 5: Elliptic estimates.}
We multiply \eqref{CHHS-reg}$_4$ by $-\Delta \vp$ and we integrate over $\Omega$. After integrating by parts and using the boundary conditions, we have
$$
\|\Delta \vp_\varepsilon\|^2+ \int_{\Omega} F_\varepsilon''(\vp_\varepsilon)|\nabla \vp_\varepsilon|^2\, \d x= 
(\nabla\mu_\varepsilon, \nabla \vp_\varepsilon)+ \theta_0 \|\nabla\vp_\varepsilon\|^2.
$$
By the regularity theory of the Neumann problem, together with \eqref{Freg2} and \eqref{E2}, we obtain
\begin{equation}
\label{E3}
\| \vp_\varepsilon\|_{H^2(\Omega)}^2\leq C (1+ \| \nabla \mu_\varepsilon\|).
\end{equation}
Due to the monotonicity of $F'_\varepsilon$, it follows that (see, e.g., \cite{FG2012})
$$
\| F'_\varepsilon(\vp_\varepsilon)\|_{L^1(\Omega)}\leq
C \int_{\Omega} \big( F'_\varepsilon(\vp_\varepsilon)-
\overline{F'_\varepsilon(\vp_\varepsilon)} \big) (\vp_\varepsilon-\overline{\vp_\varepsilon}) \, \d x+ C.
$$
Then, multiplying \eqref{CHHS-reg}$_4$ by $\vp_\varepsilon-\overline{\vp_\varepsilon}$ and integrating over $\Omega$, we have
$$
\| \nabla \vp_\varepsilon\|^2+ \int_{\Omega} \big( F'_\varepsilon(\vp_\varepsilon)-
\overline{F'_\varepsilon(\vp_\varepsilon)} \big) (\vp_\varepsilon-\overline{\vp_\varepsilon}) \, \d x \leq 
C (1+ \| \nabla \mu_\varepsilon\|).
$$
Here we have used \eqref{poincare} and \eqref{E2}.
Hence, we deduce from the above estimates that
$$
\| F'_\varepsilon(\vp_\varepsilon)\|_{L^1(\Omega)} \leq C( 1+ \| \nabla \mu_\varepsilon\| ). 
$$
Observing that $\overline{\mu_\varepsilon}=\theta_0 \overline{\vp_\varepsilon}+ \overline{F'_\varepsilon(\vp_\varepsilon)}$, we have
\begin{equation}
\label{E4}
\| \mu_\varepsilon\|_V\leq C(1+\| \nabla \mu_\varepsilon\|).
\end{equation}
Let us now rewrite \eqref{CHHS-reg}$_4$ as 
\begin{equation}
\label{ELL-reg}
-\Delta \vp_\varepsilon+ F'_\varepsilon(\vp_\varepsilon)=f,
\end{equation}
where $f= \mu_\varepsilon+ \theta_0 \vp_\varepsilon$. 
Multiplying \eqref{ELL-reg} by $|F'_\varepsilon(\vp_\varepsilon)|^{p-2}F'_\varepsilon(\vp_\varepsilon)$ and integrating by parts, we find
$$
(p-1)\int_{\Omega} 
|F_\varepsilon'(\vp_\varepsilon)|^{p-2} 
F_\varepsilon''(\vp_\varepsilon)|\nabla \vp_\varepsilon|^2 \, \d x+  \| F'_\varepsilon(\vp_\varepsilon)\|_{L^p(\Omega)}^p =
(f,|F'_\varepsilon(\vp_\varepsilon)|^{p-2}F'_\varepsilon(\vp_\varepsilon)).
$$
Notice that the first term on the left-hand side is positive due to \eqref{Freg2}. By H\"{o}lder inequality, we are led to
$$
\| F'_\varepsilon(\vp_\varepsilon)\|_{L^p(\Omega)}
\leq \| f\|_{L^p(\Omega)}.
$$
By the embedding $V\hookrightarrow L^p(\Omega)$, $1\leq p<\infty$, together with \eqref{E2} and \eqref{E4}, we arrive at
$$
\| F'_\varepsilon(\vp_\varepsilon)\|_{L^p(\Omega)}
\leq C (1+ \|\nabla \mu_\varepsilon\|),
$$
for any $1\leq p<\infty$, where $C$ is a positive constant which depends on $p$.
Writing \eqref{ELL-reg} as $-\Delta \vp_\varepsilon= f-F'_\varepsilon(\vp_\varepsilon)$, we infer from the regularity theory of the Neumann problem  that
\begin{equation}
\label{E5}
\| \vp_\varepsilon\|_{W^{2,p}(\Omega)}\leq C(1+ \|\nabla \mu_\varepsilon\|), 
\end{equation}
for any $p$ as above.
\smallskip

\textbf{Step 6: Time derivative and vorticity estimates.}
We proceed with a control on $\partial_t \vp_\varepsilon$. By using \eqref{BWd2}, \eqref{E2} and \eqref{E3}, 
we deduce that
\begin{align}
\|\partial_t \vp_\varepsilon \|_{V_0'}
&\leq  \|\nabla\mu_\varepsilon\| + \|\uu_\varepsilon\| \| \vp_\varepsilon \|_{L^\infty(\Omega)} \notag \\
&\leq   \| \nabla \mu_\varepsilon\| + C\| \uu_\varepsilon\| \| \vp_\varepsilon\|_V \log^{\frac12} \big( \mathrm{e}+\| \vp_\varepsilon\|_{H^2(\Omega)}\big) \notag \\
&\leq \|\nabla \mu_\varepsilon \|
+C\| \uu_\varepsilon\| \log^\frac12 \big( C +C\| \nabla \mu_\varepsilon\| \big).
\label{E6}
\end{align}
Next, we study the equation for the vorticity derived from the Darcy's law. 
We compute the curl of \eqref{CHHS-reg}$_1$ and we obtain
\begin{equation}
\label{curlu}
\nu(\vp_\varepsilon) \mathrm{curl}\, \uu_\varepsilon
+ \nu'(\vp_\varepsilon)\nabla \vp_\varepsilon \cdot \uu_\varepsilon^{\perp} =
\nabla \mu_\varepsilon\cdot(\nabla \vp_\varepsilon)^\perp,
\end{equation}
where $\vv^\perp= (v_2,-v_1)$.
By using \eqref{BWd2}, \eqref{ipo-nu} and \eqref{E5},
we infer that
\begin{align*}
\nu_\ast \| \mathrm{curl}\, \uu_\varepsilon\|
&\leq C\| \nabla \vp_\varepsilon \cdot \uu_\varepsilon^\perp\|+ \| \nabla \mu_\varepsilon \cdot (\nabla \vp_\varepsilon)^\perp\|\\
&\leq C\| \uu_\varepsilon\| \| \nabla \vp_\varepsilon\|_{L^\infty(\Omega)}+
C \| \nabla \mu_\varepsilon \| \| \nabla \vp_\varepsilon\|_{L^\infty(\Omega)}\\
&\leq C(\| \uu_\varepsilon\|+\| \nabla \mu_\varepsilon\|)\| \vp_\varepsilon\|_{H^2(\Omega)} \log^{\frac12}(e+\| \vp_\varepsilon\|_{W^{2,3}(\Omega))})\\
&\leq C (\| \uu_\varepsilon\|+\| \nabla \mu_\varepsilon\|) (1+\| \nabla\mu_\varepsilon\|)^\frac12 \log^{\frac12}(C+C\| \nabla \mu_\varepsilon\|).
\end{align*}
Hence, by \eqref{rot} we are led to
\begin{equation}
\label{E7}
\| \uu_\varepsilon\|_{V}\leq
C (\| \uu_\varepsilon\|+\| \nabla \mu_\varepsilon\|)
(1+\| \nabla\mu_\varepsilon\|)^\frac12 \log^{\frac12}(C+C\| \nabla \mu_\varepsilon\|).
\end{equation}
\smallskip

\textbf{Step 7: Higher order differential equality.}
Let us introduce the notation $\partial_t^h v(\cdot)=\frac{1}{h} (v(\cdot+h)-v(\cdot))$. By the regularity \eqref{regvps} and the control $\| \partial_t^h v\|_{L^2(0,T;X)}\leq \| \partial_t v\|_{L^2(0,T+1;X)}$, is is easily seen that $\| \partial_t^h \mu_\varepsilon\|_{L^2(0,T; V')}\leq C$, where $C$ is independent of $h$. This entails that $ \partial_t \mu_\varepsilon \in L^2(0,T;V')$ and one can write
$$
\l \partial_t \mu_\varepsilon, v\r = (\nabla \partial_t \vp_\varepsilon, \nabla v ) + (\Psi''_\varepsilon(\vp_\varepsilon) \partial_t \vp_\varepsilon,v), \quad \forall \, v\in V,
$$
for almost every $t\in (0,T)$.
Taking the duality between $\partial_t \mu_\varepsilon$ and \eqref{CHHS-reg}$_3$, and using the expression above, we obtain
\begin{equation}
\label{eq1}
 \frac12 \ddt  \| \nabla \mu_\varepsilon\|^2
 +  \| \nabla \partial_t\vp_\varepsilon\|^2+
  \int_\Omega F_\varepsilon''(\vp_\varepsilon)|\partial_t \vp_\varepsilon|^2 \, \d x =
  -\l \u_\varepsilon \cdot \nabla \vp_\varepsilon, \partial_t \mu_\varepsilon \r + \theta_0 \|\partial_t \vp_\varepsilon\|^2,
\end{equation}
for almost every $t\in [0,T]$.
We now want to differentiate \eqref{CHHS-reg}$_1$ with respect to time and multiplying the resulting equation by $\uu_\varepsilon$. However, this is only a formal procedure at this stage since it is not known the regularity of $\partial_t \uu_\varepsilon$ due to the  non-constant viscosity. Nonetheless, we can prove the equality
\begin{equation}
\label{eq2}
\frac12  \ddt \int_{\Omega} \nu(\vp_\varepsilon)|\uu_\varepsilon|^2\, \d x = 
\l \partial_t \mu_\varepsilon,\nabla \vp_\varepsilon\cdot \uu_\varepsilon \r +  \int_\Omega \mu_\varepsilon\nabla \partial_t \vp_\varepsilon \cdot \uu_\varepsilon \, \d x- \frac12 \int_{\Omega} \nu'(\vp_\varepsilon) \partial_t \vp_\varepsilon |\uu_\varepsilon|^2 \, \d x, 
\end{equation}
for almost every $t\in [0,T]$.
To show this, we consider \eqref{CHHS-reg}$_1$ evaluated at $t+h$, for $h>0$, and at $t$ and we multiply them by $\frac12 (\uu_\varepsilon(t+h)-\uu_\varepsilon (t))$. Adding up the two expressions and integrating over $\Omega$, we get
\begin{align}
\frac12 \int_{\Omega}  \nu(\vp_\varepsilon(t+h))&\uu^2_\varepsilon(t+h)-\nu(\vp_\varepsilon(t))\uu^2_\varepsilon(t) \,\d x -\int_{\Omega} \frac12 \Big( \nu(\vp_\varepsilon(t+h))-\nu(\vp_\varepsilon(t))\Big) \uu_\varepsilon(t)\uu_\varepsilon(t+h) \, \d x \notag\\
&=\frac12 \int_{\Omega} \Big( \mu_\varepsilon(t+h)\nabla \vp_\varepsilon(t+h) +  \mu_\varepsilon(t)\nabla \vp_\varepsilon(t)\Big)(\uu_\varepsilon(t+h)-\uu_\varepsilon (t))
\, \d x.
\label{Testh1}
\end{align}
Next, we multiply \eqref{CHHS-reg}$_1$ evaluated at $t+h$ and at $t$ by $\uu_\varepsilon(t)$ and $ (\uu_\varepsilon(t+h)$, respectively. Adding up and integrating over $\Omega$, we find
\begin{align}
\int_{\Omega} \nu(\vp_\varepsilon(t+h))& \uu_\varepsilon(t+h)\uu_\varepsilon(t) - \nu(\vp_\varepsilon(t)) \uu_\varepsilon(t+h)\uu_\varepsilon(t) \, \d x\notag \\
&=\int_{\Omega} \mu(\vp_\varepsilon(t+h))\nabla \vp_\varepsilon(t+h)\uu_\varepsilon(t)-
\mu_\varepsilon(t) \nabla \vp_\varepsilon(t)\uu_\varepsilon(t+h) \,\d x. \label{Testh2} 
\end{align}
Adding the two expressions \eqref{Testh1} and \eqref{Testh2}, and multiplying the resulting equality by $\frac1h$, we eventually obtain 
\begin{align}
\partial_t^h &\int_\Omega \frac{\nu(\vp_\varepsilon)}{2} |\uu_\varepsilon|^2 \, \d x= \frac12 \int_\Omega \partial_t^h \mu_\varepsilon \nabla \vp_\varepsilon(t+h) \cdot \big( \uu_\varepsilon(t+h)+\uu_\varepsilon(t)
\big) \,\d x \notag \\
& + \frac12 \int_{\Omega} \mu_\varepsilon(t)\partial_t^h \nabla \vp_\varepsilon \cdot \big( \uu_\varepsilon(t+h)+\uu_\varepsilon(t)
\big) \,\d x- \frac12 \int_{\Omega} \partial_t^h \nu(\vp_\varepsilon) \uu_\varepsilon(t+h)\cdot \uu_\varepsilon(t)\, \d x,
\label{Testh3}
\end{align}
for almost every $t\in (0,T)$.
Taking $\psi \in \mathcal{C}_0^\infty(0,T)$, by using the definition of weak derivative, integration by parts and \eqref{Testh3}, we have
\begin{align*}
\int_{0}^T &\ddt \int_{\Omega} \frac{\nu(\vp_\varepsilon)}{2}|\uu_\varepsilon|^2 \, \d x \, \psi \, \d \tau= - \int_{0}^T \int_{\Omega} \frac{\nu(\vp_\varepsilon)}{2}|\uu_\varepsilon|^2 \, \d x \, \ddt\psi \, \d \tau\\
&= \lim_{h\rightarrow 0} - \int_{0}^T \int_{\Omega} \frac{\nu(\vp_\varepsilon)}{2}|\uu_\varepsilon|^2 \, \d x \, \frac{\psi(\tau)-\psi(\tau-h)}{h} \, \d \tau 
= \lim_{h\rightarrow 0} \int_0^T \partial_t^h \int_\Omega \frac{\nu(\vp_\varepsilon)}{2} |\uu_\varepsilon|^2 \, \d x \, \psi \, \d \tau\\
& = \lim_{h\rightarrow 0}\,  \frac12 \int_0^T \big\l \partial_t^h \mu_\varepsilon, \nabla \vp_\varepsilon(\tau+h) \cdot  \big( \uu_\varepsilon(\tau+h)+\uu_\varepsilon(\tau)
\big) \big\r \psi \, \d \tau \notag \\
&\quad + \lim_{h\rightarrow 0}\,  \frac12 \int_0^T \int_{\Omega} \mu_\varepsilon(\tau)\partial_t^h \nabla \vp_\varepsilon \cdot \big( \uu_\varepsilon(\tau+h)+\uu_\varepsilon(\tau) \big) \,\d x \,  \psi \, \d \tau\\
&\quad - \lim_{h\rightarrow 0}\,  \frac12 \int_0^T\int_{\Omega}  \partial_t^h \nu(\vp_\varepsilon) \uu_\varepsilon(\tau+h)\cdot \uu_\varepsilon(\tau)\, \d x \, \psi \, \d \tau.
\end{align*}
Exploiting the regularity properties \eqref{regus}-\eqref{regmus}, and recalling that $\partial_t^h u \rightarrow \partial_t u$ in $L^2(0,T;X)$, we can pass to the limit as $h\rightarrow 0$ and we finally deduce \eqref{eq2}. 

Now, summing up \eqref{eq1} and \eqref{eq2}, we arrive at the differential equality
\begin{align}
\ddt  H_\varepsilon
 &+  \| \nabla \partial_t\vp_\varepsilon\|^2+
  \int_\Omega F_\varepsilon''(\vp_\varepsilon)|\partial_t \vp_\varepsilon|^2 \, \d x \notag \\   
&=\theta_0 \|\partial_t \vp_\varepsilon\|^2+
  \int_\Omega \mu_\varepsilon\nabla \partial_t \vp_\varepsilon \cdot \uu_\varepsilon \, \d x - \frac12 \int_{\Omega} \nu'(\vp_\varepsilon) \partial_t \vp_\varepsilon |\uu_\varepsilon|^2 \, \d x, 
 \label{Lambda-eq}
\end{align}
for almost every $t\in (0,T)$, where
$$
H(t)= \frac12 \| \nabla \mu_\varepsilon(t)\|^2 +  \frac12  \int_{\Omega} \nu(\vp_\varepsilon(t))|\uu_\varepsilon(t)|^2\, \d x.
$$
Notice that it is essential the cancellation of the troublesome term $\l \partial_t \mu_\varepsilon,\nabla \vp_\varepsilon\cdot \uu_\varepsilon \r$ on the right-hand side of \eqref{eq1} and \eqref{eq2}.
\smallskip

\textbf{Step 8: Higher order estimates.}
First, by assumption (A1), there exists a constant $C$ such that
\begin{equation}
\label{Lambda}
\frac{1}{C}\big( \| \nabla \mu_\varepsilon\|^2+ \| \uu_\varepsilon\|^2 \big)\leq  H \leq C \big( \| \nabla \mu_\varepsilon\|^2+ \| \uu_\varepsilon\|^2 \big).
\end{equation}
We proceed by estimating the three remaining terms on the right-hand side of \eqref{Lambda-eq}. In doing so we will make use of the following estimates that follow from \eqref{E6}, \eqref{E7} and \eqref{Lambda}
\begin{equation}
\label{E6b}
\| \partial_t \vp_\varepsilon\|_{V_0'}
\leq C H^\frac12 \log^\frac12 (C+C H),
\end{equation}
and
\begin{equation}
\label{E7b}
\| \uu_\varepsilon\|_V \leq C (H^\frac12 +H^\frac34) \log^\frac12(C+CH). 
\end{equation}
Since $\overline{\partial_t \vp_\varepsilon}=0$, by using \eqref{I}, \eqref{E6b} and Young's inequality, the first term on right-hand side of \eqref{Lambda-eq} is simply controlled by
\begin{align}
\theta_0 \| \partial_t \vp_\varepsilon\|^2\leq C \| \partial_t \vp_\varepsilon\|_{V_0'} \|\nabla \partial_t \vp_\varepsilon\| 
\leq \frac14 \|\nabla \partial_t \vp_\varepsilon\|^2
+ C H \log \big( C+CH).
\label{1term}
\end{align}
Regarding the second term in \eqref{Lambda-eq}, by exploiting \eqref{L}, \eqref{I}, the regularity of the Neumann problem, the equation \eqref{CHHS-reg}$_3$, and the estimates \eqref{E4} and \eqref{Lambda}, we obtain
\begin{align}
\int_\Omega \mu_\varepsilon \nabla \partial_t \vp_\varepsilon \cdot \uu_\varepsilon \, \d x
&=-\int_\Omega \partial_t \vp_\varepsilon \uu_\varepsilon \cdot\nabla \mu_\varepsilon \, \d x\nonumber\\
&\leq \| \uu_\varepsilon\| 
\| \partial_t \vp_\varepsilon\|_{L^4(\Omega)}
\| \nabla \mu_\varepsilon\|_{L^4(\Omega)}
\nonumber\\
& \leq C \|\u_\varepsilon\| 
\| \partial_t \vp_\varepsilon\|^{\frac12} \|\nabla \partial_t \vp_\varepsilon\|^\frac12
\|\nabla \mu_\varepsilon\|^{\frac12} 
\| \mu_\varepsilon\|_{H^2(\Omega)}^\frac12 \nonumber \\
&\leq C \|\u_\varepsilon \| 
\| \partial_t \vp_\varepsilon\|_{V_0'}^{\frac14} 
\|\nabla \partial_t \vp_\varepsilon\|^\frac34
\|\nabla \mu_\varepsilon\|^{\frac12} 
\big( \| \mu_\varepsilon\| + \| \Delta \mu_\varepsilon\| \big)^\frac12
 \nonumber\\
&\leq C \| \u_\varepsilon\| 
\| \partial_t \vp_\varepsilon\|_{V_0'}^\frac14
\| \nabla \partial_t \vp_\varepsilon\|^\frac34 
\|\nabla \mu_\varepsilon\|^{\frac12}
\big( 1+ \| \nabla \mu_\varepsilon\|+\|\partial_t \vp_\varepsilon\|+ 
\| \u_\varepsilon\cdot \nabla \vp_\varepsilon\| \big)^\frac12 \nonumber\\
&=W_1+W_2+W_3+W_4.
\label{WWW}
\end{align}
By \eqref{Lambda} and \eqref{E6b}, we have
\begin{align}
W_1 & \leq \frac{1}{32} \| \nabla \partial_t \vp_\varepsilon\|^2
+ C \| \uu_\varepsilon\|^\frac85 \| \partial_t \vp_\varepsilon\|_{V_0'}^{\frac25} \| \nabla \mu_\varepsilon\|^\frac45 \notag\\
&\leq \frac{1}{32} \| \nabla \partial_t \vp_\varepsilon\|^2+ C 
H^{\frac75} \log^{\frac15} (C+C H),
\label{W1}
\end{align}
and
\begin{align}
W_2&\leq 
\frac{1}{32} \| \nabla \partial_t \vp_\varepsilon\|^2
+ C \| \uu_\varepsilon\|^\frac85 \| \partial_t \vp_\varepsilon\|_{V_0'}^{\frac25} \| \nabla \mu_\varepsilon\|^\frac85 \notag \\
&\leq \frac{1}{32} \| \nabla \partial_t \vp_\varepsilon\|^2+
C H^\frac95 \log^{\frac15} (C+CH).
\label{W2}
\end{align}
By using \eqref{I}, \eqref{Lambda} and \eqref{E6b}, we deduce that
\begin{align}
W_3 &\leq   C \| \u_\varepsilon\|  \| \partial_t\vp_\varepsilon\|_{V_0'}^\frac12
\| \nabla \partial_t \vp_\varepsilon\|
\|\nabla \mu_\varepsilon\|^{\frac12} \notag\\
&\leq  \frac{1}{32} \| \nabla \partial_t \vp_\varepsilon\|^2+
C \| \u_\varepsilon\|^2   \| \partial_t\vp_\varepsilon\|_{V_0'} \| \nabla \mu_\varepsilon\| \notag \\
&\leq  \frac{1}{32} \| \nabla \partial_t \vp_\varepsilon\|^2+
C H^2 \log^\frac12 (C+CH).
\label{W3}
\end{align}
Thanks to \eqref{BWd2}, \eqref{E3}, \eqref{E5}, \eqref{Lambda} and \eqref{E6b}, it follows that
\begin{align}
W_4&\leq C \| \u_\varepsilon\|^\frac32 
\| \partial_t \vp_\varepsilon\|_{V_0'}^\frac14
\| \nabla \partial_t \vp_\varepsilon\|^\frac34 
\|\nabla \mu_\varepsilon\|^{\frac12}
\| \nabla \vp_\varepsilon\|_{L^\infty(\Omega)}^\frac12 \notag \\
&\leq \frac{1}{32} \| \nabla \partial_t \vp_\varepsilon\|^2 + C
\| \uu_\varepsilon\|^\frac{12}{5} \| \partial_t \vp_\varepsilon\|_{V_0'}^\frac25 \| \nabla \mu_\varepsilon\|^\frac45
\| \vp_\varepsilon\|_{H^2(\Omega)}^\frac45 
\log^{\frac45}(e+\| \vp_\varepsilon\|_{W^{2,3}(\Omega)}) \notag \\
&\leq \frac{1}{32} \| \nabla \partial_t \vp_\varepsilon\|^2 + C
\| \uu_\varepsilon\|^\frac{12}{5} \| \partial_t \vp_\varepsilon\|_{V_0'}^\frac25 \| \nabla \mu_\varepsilon\|^\frac45(1+\| \nabla \mu_\varepsilon\|)^\frac25
\log^{\frac45}(e+\| \vp_\varepsilon\|_{W^{2,3}(\Omega)}) \notag\\
&\leq \frac{1}{32} \| \nabla \partial_t \vp_\varepsilon\|^2
+C(1+H^2) \log(C+C H).
\label{W4}
\end{align}
Thus, combining \eqref{WWW} with \eqref{W1}, \eqref{W2},  \eqref{W3} and  \eqref{W4}, we are led to
\begin{equation}
\label{2term}
\int_\Omega \mu_\varepsilon \nabla \partial_t \vp_\varepsilon \cdot \uu_\varepsilon \, \d x 
\leq \frac14 \| \nabla \partial_t \vp_\varepsilon\|^2 +C (1+H^2) \log(C+CH).
\end{equation}
Let us now control the last term in \eqref{Lambda-eq}. By \eqref{L}, \eqref{I}, \eqref{Lambda}, \eqref{E6b} and \eqref{E7b}, we have
\begin{align}
-\int_\Omega \nu'(\vp_\varepsilon)\partial_t \vp_\varepsilon |\uu_\varepsilon|^2 \, \d x &\leq 
C \| \partial_t \vp_\varepsilon \| 
\|\uu_\varepsilon \|_{L^4(\Omega)}^2 \notag \\
&\leq C \|\partial_t \vp_\varepsilon \|_{V_0'}^\frac12 
\| \nabla \partial_t \vp_\varepsilon \|^\frac12
\| \uu_\varepsilon\| \| \uu_\varepsilon\|_{V}\notag\\
&\leq \frac14 \|\nabla \partial_t \vp_\varepsilon \|^2+
C \| \partial_t \vp_\varepsilon\|_{V_0'}^\frac23 \| \uu_\varepsilon\|^\frac43 \| \uu_\varepsilon\|_V^\frac43 \notag\\
&\leq \frac14 \|\nabla \partial_t \vp_\varepsilon \|^2+
C (1+H^2)\log(C+CH)
\label{3term}
\end{align}
Combining \eqref{Lambda-eq} with \eqref{1term}, \eqref{2term} and \eqref{3term}, we eventually end up with the differential inequality
\begin{equation}
\label{Linequality}
\ddt H + \frac12 
\| \nabla \partial_t \vp_\varepsilon\|^2 \leq  C+ CH^2 \log(\mathrm{e}+H).
\end{equation}
We now observe that, according to \eqref{EI} and \eqref{EI2},  $H \in L^1(0,T)$ for any $T>0$ with $\int_{0}^T H(\tau) \, \d \tau\leq C(1+\| \vp_0\|_V^2)$. Thanks to the continuity in time of the solution (cf. \eqref{regus}, \eqref{regvps} and \eqref{regmus}) and using the relation $\mu_\varepsilon \nabla\vp_\varepsilon=\nabla(\mu_\varepsilon \vp_\varepsilon)- \mu_\varepsilon \nabla \vp_\varepsilon$, we infer from the equation \eqref{CHHS-reg}$_1$ and the estimate \eqref{muk2H1} that 
\begin{align*}
\| \sqrt{\nu(\vp_\varepsilon(0))}\uu_\varepsilon(0)\|
&\leq C\| \mu_\varepsilon(0)\|_V 
\|\vp_\varepsilon(0) \|_{L^\infty(\Omega)}\\
&\leq C \| -\Delta \vp_0^k+F_\varepsilon'(\vp_0^k)-\theta_0\vp_0^k\|_V \\
&\leq C\big( 1+ \| \widetilde{\mu}_0\|_V+ \|\vp_0 \|_V).
\end{align*}
This, in turn, implies 
\begin{equation}
\label{lambda0}
H(0)\leq C\big( 1+ \| \widetilde{\mu}_0\|_V+ \|\vp_0 \|_V \big)^2.
\end{equation}
Therefore, for any $T>0$, the generalized Gronwall lemma \ref{GL2} yields
\begin{equation}
\label{Reg1}
\| \nabla \mu_\varepsilon(t)\|^2+ \| \uu_\varepsilon(t)\|^2\leq \big(\mathrm{e}+C(1+ \| \widetilde{\mu}_0\|_V+ \|\vp_0 \|_V)^2\big)^{\mathrm{e}^{C(1+\| \vp_0\|_V^2)}} \times 
\mathrm{e^{CT \mathrm{e}^{C(1+\| \vp_0\|_V^2)}}}=C_1, 
\end{equation}
for any $t\in[0,T]$, and
\begin{equation}  
\int_0^T  \| \nabla \partial_t \vp_\varepsilon(\tau )\|^2 \, \d \tau \leq C \big( \| \widetilde{\mu}_0\|_V+ \|\vp_0 \|_V \big)^2 +C T +C C_1^2\log(\mathrm{e}+ C_1) T=C_2.
\end{equation}
%where 
%$$
%\overline{C}_1= \big(\mathrm{e}+C(1+ \| \widetilde{\mu}_0\|_V+ \|\vp_0 \|_V)^2\big)^{\mathrm{e}^{C(1+\| \vp_0\|_V^2)}} \times 
%\mathrm{e^{CT \mathrm{e}^{C(1+\| \vp_0\|_V^2)}}},
%$$
%and 
%$$
%\overline{C}_2= C \big( \| \widetilde{\mu}_0\|_V+ \|\vp_0 \|_V \big)^2 +C T +C C_1^2\log(\mathrm{e}+C_1) T=C_2.
%$$
Here, $C$ is independent of $\varepsilon$ and $k$ as above.
Consequently, we deduce from \eqref{E5}, \eqref{E6} and \eqref{E7} that
\begin{align}
\label{Reg2}
\| \uu_\varepsilon(t)\|_{V}+
\| \vp_\varepsilon(t)\|_{W^{2,p}(\Omega)}+\| \partial_t\vp_\varepsilon(t)\|_{V_0'}+
\| \Psi_\varepsilon'(\vp_\varepsilon)(t)\|_{L^p(\Omega)}\leq C_3, \quad \forall \,
t\in [0,T],
\end{align}
where $C_3$ only depends on $C_1$ and $p$ (for any $2\leq p<\infty$). Applying Theorem \ref{NP-nc}, together with the above estimates \eqref{Reg1} and \eqref{Reg2}, we obtain
\begin{equation}
\label{Reg3}
\| p_\varepsilon(t)\|_{H^2(\Omega)} \leq C_4, \quad \forall \,
t\in [0,T].
\end{equation}
In addition, by using the equation \eqref{CHHS-reg}$_3$ and the regularity of the Neumann problem, we infer from \eqref{Reg2} that
\begin{equation}
\label{Reg4}
\int_{0}^T \| \mu_\varepsilon (\tau)\|_{H^3(\Omega)}^2 \, \d \tau \leq C_5.
\end{equation}
The two constants $C_4$ and $C_5$ depends on $C_1$, $C_2$ and $C_3$, but are independent of $\varepsilon$ and $k$.
\smallskip

\textbf{Step 9: Passage to the limit and regularity.} 
Thanks to the uniform estimates \eqref{Reg1}-\eqref{Reg4} with respect to $k$ and $\varepsilon$, the existence of a global strong solution to \eqref{CHHS}-\eqref{bdini} is recovered by a standard procedure. The solution $(\uu,p,\vp)$ is the limit of the solutions $(\uu_\varepsilon,p_\varepsilon,\vp_\varepsilon)$ by letting $\varepsilon \rightarrow 0$ (with $k$ fixed) and, then, $k \rightarrow\infty$. It is easily seen that it satisfies the regularity properties stated in Theorem \ref{strong2d}. In particular, $|\vp(x,t)|\leq 1$ for almost every $(x,t)\in \Omega\times(0,T)$ (see, e.g., \cite[Section 3.3]{GGW2018}). Furthermore, by exploiting assumption (A3) and  the regularity $\mu\in L^\infty(0,T;V)$, an application of Theorem \ref{ell2} entails $F''(\vp)\in L^\infty(0,T;L^p(\Omega))$, for any $p\in [2,\infty)$. Repeating the same argument used in Step $7$, we infer that $\partial_t \mu\in L^2(0,T;V')$.
\smallskip

\textbf{Step 10: Uniqueness and continuous dependence estimate.} The uniqueness of strong solutions follows from Theorem \ref{weak-strong}. Let us prove a continuous dependence estimate in $L^2(\Omega)$ with respect to the initial data. We consider two initial conditions $\vp_{01}$ and $\vp_{02}$ satisfying the assumptions of Theorem \ref{strong2d} with $\overline{\vp}_{01}=\overline{\vp}_{02}=m \in (-1,1)$. 
We define $\uu=\uu_1-\uu_2$, $p=p_1-p_2$ and $\vp=\vp_1-\vp_2$. 
The problem for the difference of two solutions read as
$$
\partial_t \vp+ \u_1 \cdot \nabla \vp+ \u\cdot \nabla \vp_2 =   \Delta \mu,
$$
where 
$$
\nu(\vp_1) \u +\nabla \widetilde{p}= -\Delta \vp \nabla \vp_2-\Delta \vp_1 \nabla \vp- (\nu(\vp_1)-\nu(\vp_2)) \uu_2,\quad 
\mu= - \Delta \vp +  \Psi'(\vp_1)-\Psi'(\vp_2).
$$
Notice that $\overline{\vp}(t)=0$ for all $t\geq 0$. By the regularity properties of strong solutions, we have the estimates
\begin{equation}
\label{Reg-diff}
\| \u_i\|_{L^\infty(0,T;V)}\leq C,
\quad \|\vp_i\|_{L^\infty(0,T;W^{2,p}(\Omega))}\leq C, 
\quad \|F''(\vp_i)\|_{L^\infty(0,T;L^p(\Omega))}\leq C,
\end{equation}
for any $2\leq p<\infty$.
Following the proof of Theorem \ref{RU}, it is easily seen that
$$
\frac12 \ddt \| \vp\|^2 + \frac12 \| \Delta \vp\|^2
\leq C \|\vp \|^2 + C \| \uu\| \| \nabla \vp\|.
$$
Now we multiply the Darcy's law above by $\uu$ and we integrate over $\Omega$. By using the Sobolev embedding and \eqref{Reg-diff}, we obtain
\begin{align*}
\nu_\ast \| \u\|^2
&\leq C\|\Delta \vp \| \|\nabla \vp_2 \|_{L^\infty(\Omega)}
\| \u\|+ C \| \Delta \vp_1\|_{L^4(\Omega)} \|\nabla \vp\|_{L^4(\Omega)} \| \u\|+ C\| \vp\|_{L^6(\Omega)} \| \u_2\|_{L^3(\Omega)} \|\u \|\\
&\leq C \| \Delta \vp\| \| \uu\|.
\end{align*}
Recalling the first inequality in \eqref{H1-inter}, we eventually deduce the differential inequality 
\begin{equation}
\label{diff-ho}
\frac12 \ddt \| \vp\|^2 + \frac14 \| \Delta \vp\|^2
\leq C \|\vp \|^2,
\end{equation}
which implies the desired conclusion \eqref{CDstrong}.
The proof is complete.
\end{proof}

We conclude this section by showing the propagation in time of regularity for any weak solution.

\begin{theorem}
\label{regularity}
Let $d=2$ and the initial datum $\vp_0$ be such that $\|\vp_0)\|_V \leq R$, for some $R>0$, $\| \vp_0\|_{L^\infty(\Omega)}\leq 1$ and $\overline{\vp}_0=m\in (-1,1)$. Assume that (A1)-(A3) hold. 
Then, for every $\sigma>0$ and $p\geq1$, there exists a constant
$C=C(\sigma,p,m,R)>0$ such that
\begin{equation}
\label{weak-reg-sigma}
 \| \u\|_{L^{\infty}(\sigma,\infty;V)}\leq C,\quad 
 \| \vp\|_{L^{\infty}(\sigma,\infty;W^{2,p}(\Omega))}
  \leq C.
\end{equation}
Moreover, for every $\sigma>0$, there exist  $\delta=\delta(\sigma, m,R)\in (0,1)$  and a constant $C=C(\sigma, m, R)>0$ such that
\begin{equation}
\label{seppropest}
\| \vp(t)\|_{L^{\infty}(\Omega)}\leq 1-\delta,
\quad \forall \, t \geq 2\sigma,
\end{equation}
and
\begin{equation}
\label{uPvp}
\| \u\|_{L^{\infty}(2\sigma,\infty; H^2(\Omega))}\leq C,\quad 
\| \vp\|_{L^{\infty}(2\sigma,\infty; H^4(\Omega))}
\leq C.
\end{equation}
\end{theorem}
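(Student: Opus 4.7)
The plan is to promote the weak solution to a strong solution after an arbitrarily small delay via the weak-strong uniqueness principle (Theorem~\ref{weak-strong}), then to extract uniform-in-time bounds from the differential inequality~\eqref{Linequality} combined with the global energy dissipation~\eqref{BEL}, and finally to bootstrap to the separation property and higher regularity~\eqref{uPvp} using assumption~(A3).

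\textbf{Step 1: instantaneous smoothing.} From the energy identity~\eqref{BEL} I obtain the global bounds
$\|\vp\|_{L^\infty(0,\infty;V)}+\|\nabla\mu\|_{L^2(0,\infty;H)}+\|\uu\|_{L^2(0,\infty;\mathbf{H}_\sigma)}\leq C(R,m)$, and~\eqref{reg-vp-weak} upgrades these to $\mu\in L^2(0,T;V)$ and $\vp\in L^4(0,T;H^2(\Omega))$ on every $[0,T]$. Hence I can select $t_0\in(0,\sigma/4)$ for which $\vp(t_0)\in H^2(\Omega)$, $\widetilde{\mu}(t_0):=-\Delta\vp(t_0)+F'(\vp(t_0))=\mu(t_0)+\theta_0\vp(t_0)\in V$, $\partial_{\n}\vp(t_0)=0$, and $\|\vp(t_0)\|_{H^2}+\|\widetilde{\mu}(t_0)\|_V\leq C(\sigma,R,m)$. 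These data fulfill the hypotheses of Theorem~\ref{strong2d}, so starting from $t_0$ I obtain a unique strong solution on $[t_0,T]$ for every $T>t_0$. Since the latter satisfies $\vp\in L^\infty(t_0,T;W^{2,p}(\Omega))\hookrightarrow L^\infty(t_0,T;W^{1,r}(\Omega))$ for some $r>2$, Theorem~\ref{weak-strong} identifies it with $(\uu,p,\vp)$ on $[t_0,T]$. Thus the given weak solution inherits the strong-solution regularity on $[\sigma/4,T]$.

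\textbf{Step 2: uniform-in-time bounds.} Setting
$H(t)=\tfrac12\|\nabla\mu(t)\|^2+\tfrac12\|\sqrt{\nu(\vp(t))}\uu(t)\|^2$, the differential inequality~\eqref{Linequality} derived in Step~8 of Theorem~\ref{strong2d} reads
\[
\ddt H+\tfrac12\|\nabla\partial_t\vp\|^2\leq C+C\, H^2\log(e+H)\qquad\text{a.e.\ in }[t_0,\infty),
\]
while the energy identity ensures $H\in L^1(t_0,\infty)$ with $\|H\|_{L^1(t_0,\infty)}\leq C(R,m)$. My plan is to apply a superlinear uniform Gronwall lemma of the type recorded in Appendix~\ref{App}, which combines the $L^1_t$ integrability of $H$ with the superlinear differential inequality to produce $\sup_{t\geq\sigma/2}H(t)\leq C(\sigma,R,m)$. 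This $T$-independent control is the main technical obstacle, since a direct iteration of the $T$-dependent bound~\eqref{Reg1} from Theorem~\ref{strong2d} fails: the $H^2\log(e+H)$ growth prevents any linear uniform-Gronwall argument. Once this bound is in hand, the elliptic estimate~\eqref{E4} yields $\mu\in L^\infty(\sigma/2,\infty;V)$, estimate~\eqref{E5} gives $\vp\in L^\infty(\sigma/2,\infty;W^{2,p}(\Omega))$ for every $p\geq 1$, and the vorticity identity~\eqref{curlu} combined with~\eqref{rot} produces $\uu\in L^\infty(\sigma/2,\infty;V)$, proving~\eqref{weak-reg-sigma}.

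\textbf{Step 3: separation and higher regularity.} To obtain~\eqref{seppropest}, I rewrite the chemical-potential equation as $-\Delta\vp+F'(\vp)=\mu+\theta_0\vp$ with right-hand side bounded in $L^\infty(\sigma/2,\infty;V)$, and invoke assumption~(A3) together with the last bullet of Theorem~\ref{ell2} to conclude that $F''(\vp)\in L^\infty(\sigma/2,\infty;L^p(\Omega))$ for every $p\geq 1$, with norms controlled by $C(\sigma,R,m)$. An instantaneous-separation argument à la \cite{GGW2018,GMT2018} (a De~Giorgi-type iteration on the level sets $\{\vp>1-\eta\}$ and $\{\vp<-1+\eta\}$ in which the $L^p$ bound on $F''(\vp)$ absorbs the singular contribution of $F'(\vp)$) then delivers $\delta=\delta(\sigma,R,m)\in(0,1)$ with $\|\vp(t)\|_{L^\infty(\Omega)}\leq 1-\delta$ for $t\geq 2\sigma$. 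Once $\vp$ is strictly separated from $\pm 1$, $\Psi'(\vp),\dots,\Psi^{(4)}(\vp)$ are uniformly bounded on the trajectory. Standard elliptic bootstrapping on $-\Delta\vp+\Psi'(\vp)=\mu$, using $\mu\in L^\infty_t V$, upgrades $\vp$ to $L^\infty(2\sigma,\infty;H^4(\Omega))$, and Theorem~\ref{NP-nc} applied to the Darcy pressure equation from Remark~\ref{equiv} (with the improved bounds on $\vp$ and $\mu$) yields $\uu\in L^\infty(2\sigma,\infty;H^2(\Omega))$, completing~\eqref{uPvp}.
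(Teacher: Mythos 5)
Your Steps 1 and 2 follow the paper's own route: smoothing via Theorem \ref{strong2d} plus the uniqueness criterion of Theorem \ref{weak-strong}, and then the logarithmic uniform Gronwall lemma (Lemma \ref{UGL2}, applied to $\frac{\d}{\d t}H\leq C+ (CH)\cdot H\log(\mathrm{e}+H)$ with the local-in-time $L^1$ bound on $H$ supplied by \eqref{BEL}) to get $\sup_{t\geq \sigma}H(t)\leq C(\sigma,R,m)$. That part is sound and is exactly what the paper does.

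Step 3, however, has a genuine gap. The bounds you have at that stage are $\mu\in L^\infty(\sigma,\infty;V)$ and $\vp\in L^\infty(\sigma,\infty;W^{2,p}(\Omega))$. Even granting the separation property, elliptic bootstrapping on $-\Delta\vp+\Psi'(\vp)=\mu$ with $\mu$ only in $L^\infty_t V$ gives $\Delta\vp=\Psi'(\vp)-\mu\in L^\infty_t H^1$, i.e.\ $\vp\in L^\infty(2\sigma,\infty;H^3(\Omega))$ — not $H^4$. To reach \eqref{uPvp} you must first upgrade $\mu$ to $L^\infty(2\sigma,\infty;H^2(\Omega))$, which via $\Delta\mu=\partial_t\vp+\uu\cdot\nabla\vp$ requires the estimate $\|\partial_t\vp\|_{L^\infty(2\sigma,\infty;H)}\leq C$. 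This is the missing ingredient: the paper obtains it by redoing the $L^2$ continuous-dependence computation of Step 10 of Theorem \ref{strong2d} for the difference quotients $\partial_t^h\vp$, which yields $\frac12\frac{\d}{\d t}\|\partial_t^h\vp\|^2+\frac14\|\Delta\partial_t^h\vp\|^2\leq C\|\partial_t^h\vp\|^2$ with $C$ independent of $h$, and then applying the (linear) uniform Gronwall lemma and letting $h\to0$. The same bound also makes the separation property immediate and elementary: once $\mu\in L^\infty_t H^2(\Omega)\hookrightarrow L^\infty_t L^\infty(\Omega)$ by \eqref{Ad2}, the second bullet of Theorem \ref{ell2} gives $\|F'(\vp)\|_{L^\infty(\Omega)}\leq\|\mu+\theta_0\vp\|_{L^\infty(\Omega)}\leq C$, hence \eqref{seppropest}. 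Your alternative route to separation via a De Giorgi iteration using only $F''(\vp)\in L^\infty_t L^p$ and $\mu\in L^\infty_t V$ is left as an unsubstantiated sketch and is not what \cite{GGW2018} does; in any case it would not repair the $H^4$/$H^2$ bootstrap, which independently needs $\mu\in L^\infty_t H^2(\Omega)$ and hence the $\partial_t\vp$ estimate.
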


\begin{proof}
Let $(\uu,p,\vp)$ a weak solution to system \eqref{CHHS}-\eqref{bdini} with initial datum $\vp_0$. In light of the energy equality \eqref{BEL}, for any $\sigma>0$, we have
$$
\E(\vp(t))\leq \frac12 R^2+C, \quad \forall\, t \in \Big[0,\frac{\sigma}{2}\Big], \ \text{ and } \ \int_0^{\frac{\sigma}{2}} \| \nabla \mu(\tau)\|^2 \, \d \tau\leq \frac12 R^2+C.
$$
We deduce that there exists $t^\ast \in (0,\frac{\sigma}{2})$ such that $\|\nabla \mu(t^\ast)\|\leq \sqrt{\frac{R^2+2 C}{\sigma}}$ and $\vp(t^\ast)\in H^2(\Omega)$. 
By Theorem \eqref{strong2d}, there exists a unique global strong solution $(\uu^\ast,p^\ast,\vp^\ast)$ on $[t^\ast,\infty)$ with initial datum $\vp(t^\ast)$. In addition, we infer from Theorem \ref{weak-strong} that $(\uu,p,\vp)\equiv (\uu^\ast,p^\ast,\vp^\ast)$ on $[t^\ast,\infty)$. Now, by repeating the higher order estimates in the proof of Theorem \eqref{strong2d} for the solution $(\uu^\ast,p^\ast,\vp^\ast)$, we arrive at
$$
\ddt H + \frac12 \| \nabla \partial_t \vp^\ast\|^2\leq C +CH^2 \log(\mathrm{e}+H),
$$
where $H= \frac12 \| \nabla \mu^\ast\|^2+\frac12 \|\sqrt{\nu(\vp^\ast)}\uu^\ast \|^2$.
Because of the energy equality \eqref{BEL}, $\int_{t}^{t+r}H(\tau) \, \d \tau \leq C(R^2+1)$, for any $t\geq 0$. It follows from the generalized uniform Gronwall lemma \ref{UGL2} that 
$$
H(t)\leq \mathrm{e}^{\big(\frac{C(R^2+1)}{\sigma}+C\sigma\big)\mathrm{e}^{C(R^2+1)}}, \quad \forall \, t \geq \sigma.
$$
Thanks to the classical inequality $\| \mu^\ast\|_{V}\leq C(1+\| \nabla \mu^\ast\|)$ (cf. Step 5 in the proof of Theorem \ref{strong2d}), we infer that $\| \mu^\ast\|_{L^\infty(\sigma,\infty;V)}\leq C$, where the constant $C>0$ only depends on $R$, $\sigma$ and $m$. As a consequence, combining this estimate with Theorem \ref{ell2} and the equation for the vorticity (cf. \eqref{curlu}), we easily obtain the bounds in \eqref{weak-reg-sigma}.

The proof of \eqref{seppropest} and \eqref{uPvp} relies on further higher-order estimates similar to \cite[Lemma 5.4]{GGW2018} (see also \cite{CG,GMT2018}). We first observe that, repeating the argument in Step 10 and exploiting the estimates in \eqref{weak-reg-sigma}, we find 
$$
\frac12 \ddt \| \partial_t^h \vp^\ast\|^2 + \frac14 \| \Delta \partial_t^h \vp^\ast\|^2
\leq C \|\partial_t^h \vp^\ast \|^2,
$$
for almost everywhere $t\in (\sigma,\infty)$, where $\partial_t^h \vp(\cdot)= \frac{1}{h}(\vp^\ast(\cdot+h)-\vp(\cdot))$, and the constant $C>0$ depends on $\sigma$, $R$ and $m$, but is independent of $h$. 
By the uniform Gronwall lemma \cite[Chapter III, Lemma 1.1]{Temam1997}, after taking the limit as $h\rightarrow 0$, we obtain that $\| \partial_t \vp^\ast\|_{L^\infty(2\sigma,\infty;H)}\leq C$. From the equation \eqref{CHHS}$_3$ and the above regularity, we deduce that $\| \mu^\ast\|_{L^\infty(2\sigma,\infty;H^2(\Omega))}$. Then, Theorem \ref{ell2} immediately entails the validity of \eqref{seppropest}. Finally, the estimates in \eqref{uPvp} can be easily inferred from the separation property and the regularity of the Neumann problem. The proof is complete. 
\end{proof}

\section{Local Strong Solutions in Three Dimensions}
\label{S6}
\setcounter{equation}{0}

%When the spatial dimensional is three,
%the existence of a unique global strong solution to problem \eqref{CHHS}-\eqref{bdini}  with arbitrary large regular initial datum $\vp_0$ can not be expected (cf. \cite{GGW2017} for the case
% with matched viscosity and \cite{WW12} for the case with regular potential).
%{\color{purple}
%Nonetheless, we can prove the existence of global strong solutions provided that the initial datum $\vp_0$ and the parameter $\theta_0$ are sufficiently small.}

The purpose of this section is to show the existence and uniqueness of strong solutions for the HSCH system with unmatched viscosities and logarithmic potential in dimension three. More precisely, we prove that the strong solutions are local-in-time for large initial conditions and global-in-time for appropriate small initial conditions.

Before proceeding with the main result of this section, by virtue of the assumption (A4), we introduce $\beta>0$ such that the potential
$$
\widetilde{\Psi}(s)= F(s)-\frac{\theta_0}{2}s^2+|\Psi(\beta)|,
 \quad s \in [-1,1],
$$
is non-negative. Then, we define the related free energy
$$
\widetilde{\E}(\vp)= \int_{\Omega} \frac12 | \nabla \vp|^2 + \widetilde{\Psi}(\vp) \, \d x.
$$
We are now ready to state 
\begin{theorem}
\label{strong3d}
Let $d=3$. Assume that (A1)-(A4) hold. Let $\vp_0 \in H^2(\Omega)$ such that $\| \vp_0\|_{L^\infty(\Omega)}\leq 1$, $\overline{\vp}_0=m \in (-1,1)$, $\mu_0=-\Delta \vp_0+\Psi'(\vp_0) \in V$ and $\partial_\n \vp_0=0$ on $\partial \Omega$. We have the following:
\smallskip

\begin{itemize}
\item[$\bullet$]
For any $R_1>0$ and $R_2>0$ such that  $\| \vp_0\|_V\leq R_1$ and $\| \widetilde{\mu}_0\|_V\leq R_2$, where $\widetilde{\mu}_0=-\Delta \vp_0+F'(\vp_0)$, there exist $T_0=T_0(R_1,R_2)>0$ and a unique strong solution to \eqref{CHHS}-\eqref{bdini} defined on $[0,T_0]$ such that 
\begin{align}
\label{LT1}
&\uu \in L^\infty(0,T_0;\H_\sigma\cap \V), \quad p\in L^\infty(0,T_0;H^2(\Omega)),\\
\label{LT2}
&\vp \in L^\infty(0,T_0;W^{2,p}(\Omega))\cap H^1(0,T_0;V),\\
\label{Linf3d}
&\vp \in L^{\infty}(\Omega \times (0,T_0)) \text{ with } |\vp(x,t)|<1 \text{ a.e. }(x,t) \in \Omega\times(0,T_0),\\
\label{LT3}
&\mu \in L^{\infty}(0,T_0;V)\cap L^2(0,T_0;H^3(\Omega)),
\end{align}
where $2\leq p\leq 6$, and satisfies \eqref{CHHS} almost everywhere in  $\Omega \times (0,T_0)$ and \eqref{bdini} almost everywhere in $\partial \Omega \times (0,T_0)$, and $\vp(\cdot,0)=\vp_0(\cdot)$.

\item[$\bullet$]
There exist two constants $\eta_1>0$ and $\eta_2>0$ depending on the parameters of the system. If the initial condition $\vp_0$ satisfies  
\begin{equation}
\label{assum3}
\widetilde{\E}(\vp_0)\leq \eta_1, \quad \| \nabla \mu_0\| \leq \eta_2, \quad \| \vp_0\|_{L^\infty(\Omega)}<1,
\end{equation}
then there exists a unique (global) strong solution to \eqref{CHHS}-\eqref{bdini} such that
\begin{align}
\label{GT1}
&\uu \in L^\infty(0,\infty;\H_\sigma\cap \V), \quad p\in L^\infty(0,\infty;H^2(\Omega)),\\
\label{GT2}
&\vp \in L^\infty(0,\infty;W^{2,p}(\Omega))\cap H^1(0,\infty;V),\\
\label{Ginf3d}
&\vp \in L^{\infty}(\Omega \times (0,\infty)) \text{ with } |\vp(x,t)|<1 \text{ a.e. }(x,t) \in \Omega\times(0,\infty),\\
\label{GT3}
&\mu \in L^{\infty}(0,\infty;V)\cap L^2(0,\infty;H^3(\Omega)),
\end{align}
where $2\leq p\leq 6$, and satisfies \eqref{CHHS} almost everywhere in  $\Omega \times (0,\infty)$ and \eqref{bdini} almost everywhere in $\partial \Omega \times (0,\infty)$, and $\vp(\cdot,0)=\vp_0(\cdot)$. Moreover, there exists $\gamma>0$ and $\delta_0>0$ such that 
\begin{equation}
\label{Sepuni}
\| \nabla \mu(t)\|+\| \uu(t)\|\leq C\mathrm{e}^{\frac{\gamma}{2}t},\quad \| \vp(t)\|_{L^\infty(\Omega)}\leq 1-\delta_0, \quad \forall \, t\geq 0,
\end{equation}
for some constant $C>0$ depending on $\eta_1$ and $\eta_2$.
\end{itemize}
\end{theorem}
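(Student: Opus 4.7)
The proof follows the same general scheme as Theorem \ref{strong2d}: regularize the potential by $\Psi_\varepsilon$ as in \eqref{defF1s}, construct the approximating initial data $\vp_0^k$ via the Neumann problem \eqref{ellapp}, invoke the classical strong existence results from \cite{WW12,WZ13} to produce the smooth solutions $(\uu_\varepsilon,p_\varepsilon,\vp_\varepsilon)$ with regularity \eqref{regus}--\eqref{regmus}, derive uniform a priori estimates in $\varepsilon$ and $k$, and pass to the limit. The basic energy estimate \eqref{EI} and the elliptic estimates built from Theorem \ref{ell2} are dimension-independent; the only difference in 3D is that $\vp_\varepsilon$ is controlled only in $W^{2,6}(\Omega)$ (rather than $W^{2,p}$ for all $p<\infty$), so that by $W^{2,6}\hookrightarrow W^{1,\infty}$ we get $\|\nabla\vp_\varepsilon\|_{L^\infty(\Omega)}\leq C(1+\|\nabla\mu_\varepsilon\|)$, the 3D analog of the logarithmic bound used in Step~6 of Theorem \ref{strong2d}.

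\textbf{The super-quadratic differential inequality.} I will repeat verbatim Step~7 of the 2D proof: the cancellation of the dangerous term $\langle\partial_t\mu_\varepsilon,\nabla\vp_\varepsilon\cdot\uu_\varepsilon\rangle$ between the equations for $\tfrac{1}{2}\tfrac{d}{dt}\|\nabla\mu_\varepsilon\|^2$ and $\tfrac{1}{2}\tfrac{d}{dt}\int\nu(\vp_\varepsilon)|\uu_\varepsilon|^2\,dx$ is algebraic and survives in any dimension, yielding the equality \eqref{Lambda-eq} for $H=\tfrac{1}{2}\|\nabla\mu_\varepsilon\|^2+\tfrac{1}{2}\|\sqrt{\nu(\vp_\varepsilon)}\uu_\varepsilon\|^2$. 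The three right-hand side terms are now estimated with the 3D tools (Ladyzhenskaya \eqref{L3}, Agmon \eqref{Ad3}, Gagliardo--Nirenberg \eqref{GN3}, the vorticity bound \eqref{rot} applied to \eqref{curlu}, and the $W^{2,6}$ bound above). In place of the 2D logarithmic factor, the worse 3D Sobolev exponents produce genuine polynomial losses, and after absorbing $\tfrac{1}{2}\|\nabla\partial_t\vp_\varepsilon\|^2$ on the left I expect an inequality of the form
\begin{equation*}
\tfrac{d}{dt}H+\tfrac{1}{2}\|\nabla\partial_t\vp_\varepsilon\|^2\leq C\bigl(1+H^\alpha\bigr),\qquad \alpha>1.
\end{equation*}
Verifying this term by term, while tracking the exponent $\alpha$ so that it remains finite, is the technical core of the proof.

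\textbf{Local existence for large data.} Since $H(0)\leq C(1+\|\widetilde\mu_0\|_V+\|\vp_0\|_V)^2\leq C(1+R_2)^2$ as in \eqref{lambda0}, an ODE comparison with $y'=C(1+y^\alpha)$ gives a time $T_0=T_0(R_1,R_2)>0$ and a bound $H(t)\leq K(R_1,R_2)$ on $[0,T_0]$. The bootstrap of Steps~8--9 of the 2D proof then produces the regularities \eqref{LT1}--\eqref{LT3} uniformly in $\varepsilon,k$, and standard compactness arguments let us pass to the limit. Uniqueness (in 3D) and the separation $|\vp|<1$ are obtained by adapting the argument of Theorem \ref{weak-strong}: the strong regularity $\vp\in L^\infty(0,T_0;H^2(\Omega))\hookrightarrow L^\infty(0,T_0;W^{1,6}(\Omega))$ with $6>d=3$ supplies the hypothesis needed to repeat the cancellation estimate, so that any two strong solutions with the same initial data coincide, and the separation follows from $\mu\in L^\infty(0,T_0;V)$ combined with Theorem \ref{ell2}.

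\textbf{Global small-data existence and exponential decay.} For the second part I exploit the dissipative structure of the system, which under assumption (A4) admits the nonnegative energy $\widetilde{\E}$ with dissipation $\|\sqrt{\nu(\vp)}\uu\|^2+\|\nabla\mu\|^2$. Monotonicity of $\widetilde{\E}$ gives $\widetilde{\E}(\vp(t))\leq \eta_1$ for all $t\geq 0$; combining this with a Poincar\'e--Wirtinger type inequality (which uses the two-well structure of $\widetilde\Psi$ to bound $\|\vp-\bar\vp\|$ by $\|\nabla\mu\|$ up to a term absorbable when $\widetilde{\E}(\vp)$ is small), together with $\|\uu\|^2\leq C\|\nabla\mu\|^2+C\widetilde{\E}(\vp)H$ coming from the Darcy law, produces a linear dissipation bound $\lambda H\leq \|\nabla\mu\|^2+\|\uu\|^2$ as long as $\widetilde{\E}(\vp)$ stays below a fixed threshold. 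Adding this to the super-quadratic inequality above yields $\tfrac{d}{dt}H+\gamma H\leq C H^\alpha$. A standard continuity/bootstrap argument then shows that if $H(0)\leq \eta_2^2$ and $\widetilde{\E}(\vp_0)\leq \eta_1$ are both sufficiently small, $H(t)$ remains small for all time and decays at rate $\gamma/2$; this delivers \eqref{Sepuni} and, via Theorem \ref{ell2}, the uniform separation property. The main obstacle will be the derivation of this Poincar\'e-type inequality that converts the dissipation into a linear damping for $H$, as this is precisely the step that in \cite{GGW2018} required the full \L ojasiewicz--Simon machinery; keeping track of the exponent $\alpha$ through all the 3D nonlinear estimates is the second delicate point.
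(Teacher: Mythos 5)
Your treatment of the first bullet coincides in all essentials with the paper's: regularization of $\Psi$ and of the initial datum, the algebraic cancellation of $\langle\partial_t\mu_\varepsilon,\nabla\vp_\varepsilon\cdot\uu_\varepsilon\rangle$ leading to the identity \eqref{Lambda-eq3D}, the replacement of the 2D logarithmic bounds by the $W^{2,6}(\Omega)\hookrightarrow W^{1,\infty}(\Omega)$ elliptic estimate and the 3D vorticity/Gagliardo--Nirenberg inequalities, an ODE comparison for $\frac{\d}{\d t}H\leq C(1+H^{\alpha})$ (the paper gets $\alpha=\frac52$), and uniqueness by rerunning the $V_0'$ argument of Theorem \ref{weak-strong} with $\vp\in L^\infty(0,T_0;W^{2,6}(\Omega))$. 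Deferring the exponent bookkeeping is acceptable since every term closes with a finite $\alpha$.

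The genuine gap is in the small-data part, precisely at the point you flag as "the main obstacle". The inequality you propose as the source of linear damping, $\lambda H\leq \|\nabla\mu\|^2+\|\uu\|^2$, is trivially true (it is the definition of $H$ up to the constants in (A1)) and does not help: the dissipation appearing on the left-hand side of the higher-order inequality is $\|\nabla\partial_t\vp\|^2$ (plus $\int_\Omega F''(\vp)|\partial_t\vp|^2$), \emph{not} $\|\nabla\mu\|^2+\|\uu\|^2$; the latter is the dissipation of the basic energy identity \eqref{BEL}, which only controls $\int_0^\infty H\,\d\tau$ and cannot be "added" to the $H$-inequality to produce $\frac{\d}{\d t}H+\gamma H\leq CH^{\alpha}$. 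No two-well Poincar\'e--Wirtinger inequality and no \L ojasiewicz--Simon argument is needed here — avoiding the latter is exactly the simplification the paper advertises. The missing elementary step (see \eqref{lowreg}) is: test the Darcy law with $\uu$ and the Cahn--Hilliard equation with $\mu$ and add, obtaining $\|\nabla\mu\|^2+\nu_*\|\uu\|^2\leq -(\mu,\partial_t\vp)\leq C\|\nabla\partial_t\vp\|\,\|\nabla\mu\|$ because $\overline{\partial_t\vp}=0$; this yields $\gamma H\leq \frac18\|\nabla\partial_t\vp\|^2$ for a small structural $\gamma>0$, which is the linear damping. Assumption (A4) enters only through the normalization $\widetilde{\Psi}\geq 0$, which turns \eqref{BEL} into $\int_0^t H\,\d\tau\leq C\eta_1$ so that the remaining term $C\int_0^tH\,\d\tau$ on the right-hand side is small. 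Two further points you gloss over: the continuation past the maximal time $\widetilde{T}$ requires showing that $H$ extends continuously to $\widetilde{T}$ (via the integrated form of the inequality) \emph{and} that $\|\vp(\widetilde{T})\|_{L^\infty(\Omega)}<1$; the latter follows from the smallness of $\widetilde{\E}$ through the chain $\|\nabla\vp\|^2\leq 2\eta_1$, $\|\Delta\vp\|^2\leq C\|\nabla\vp\|(\|\nabla\mu\|+\|\nabla\vp\|)$ and Sobolev embedding (cf. \eqref{infinito}), not from Theorem \ref{ell2}.
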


\begin{remark}
Examples of initial conditions satisfying the smallness conditions in \eqref{assum3} are constant functions sufficiently close to $\beta$ or perturbations of the free energy minimizers. Note that if $\vp_\beta\equiv \beta$, then $\widetilde{\E}(\vp_\beta)=0$ and $\nabla \mu_0=\widetilde{\Psi}'(\vp_\beta)=0$.
\end{remark}

\begin{proof}
We divide the proof into three parts.  
\smallskip

\textbf{First part: Local existence for large data.} 
We follow the argument employed in the proof of Theorem \ref{strong2d}. Notice that Steps $1$-$5$ can be repeated in the same manner in the three dimensional case.
We point out that the solution $(\uu_\varepsilon,p_\varepsilon,\vp_\varepsilon)$ of \eqref{CHHS-reg} is local in time, namely it satisfies \eqref{regus}-\eqref{regmus} on the interval $[0,T^\ast]$, where $T^\ast$ depends on $k$ and $\varepsilon$. This result is proven in \cite[Theorem 3.1]{WZ13}. 

Now we proceed by showing that the solution $(\uu_\varepsilon,p_\varepsilon, \vp_\varepsilon)$ is well-defined on a time interval $[0,T]$, where $T$ is positive and independent of $\varepsilon$ and $k$. In order to do this, we prove uniform higher order estimates. We recall the differential equality
\begin{align}
\ddt  H
 &+  \| \nabla \partial_t\vp_\varepsilon\|^2+
  \int_\Omega F_\varepsilon''(\vp_\varepsilon)|\partial_t \vp_\varepsilon|^2 \, \d x \notag \\
  &=\theta_0 \|\partial_t \vp_\varepsilon\|^2+
  \int_\Omega \mu_\varepsilon\nabla \partial_t \vp_\varepsilon \cdot \uu_\varepsilon \, \d x  - \frac12 \int_{\Omega} \nu'(\vp_\varepsilon) \partial_t \vp_\varepsilon |\uu_\varepsilon|^2 \, \d x, 
 \label{Lambda-eq3D}
\end{align}
for almost every $t\in (0,T^\ast)$, where
$$
H(t)= \frac12 \| \nabla \mu_\varepsilon(t)\|^2 +  \frac12  \int_{\Omega} \nu(\vp_\varepsilon(t))|\uu_\varepsilon(t)|^2\, \d x.
$$
We recall that 
\begin{equation}
\label{Lambda-cont3D}
\frac{1}{C}\big( \| \nabla \mu_\varepsilon\|^2+ \| \uu_\varepsilon\|^2 \big)\leq  H \leq C \big( \| \nabla \mu_\varepsilon\|^2+ \| \uu_\varepsilon\|^2 \big).
\end{equation}
Moreover, we report the bounds
\begin{equation}
\label{E2-3d}
\|\vp_\varepsilon \|_{L^\infty(0,T^\ast;V)}\leq C_0,\quad
\| \nabla \mu_\varepsilon\|_{L^2(0,T^\ast;H)}\leq C_0,\quad
\| \uu_\varepsilon\|_{L^2(0,T^\ast;\H_\sigma)}\leq C_0,
\end{equation}
where the constant $C_0$ only depends on $R_1$,
and the estimates
\begin{equation}
\label{E3-3d}
\| \vp_\varepsilon\|_{H^2(\Omega)}^2\leq C(1+\| \nabla \mu_\varepsilon\|),\quad \| \mu_\varepsilon\|_{V}\leq C(1+\| \nabla \mu_\varepsilon\|),\quad \| \vp_\varepsilon\|_{W^{2,6}(\Omega)}\leq C(1+\| \nabla \mu_\varepsilon\|),
\end{equation}
where the constant $C>0$ is independent of $k$ and $\varepsilon$. 
Note that the latter estimate in \eqref{E3-3d} differs from \eqref{E5} due to the Sobolev embedding $V\hookrightarrow L^6(\Omega)$ in dimension three (cf. Theorem \ref{ell2}).

By using \eqref{Ad3}, \eqref{Lambda-cont3D}, \eqref{E2-3d} and \eqref{E3-3d}, we obtain
\begin{align}
\| \partial_t \vp_\varepsilon\|_{V_0'}
&\leq \| \uu_\varepsilon\|\| \vp_\varepsilon\|_{L^\infty(\Omega)}+
\| \nabla \mu_\varepsilon\|\notag\\
&\leq C\| \uu_\varepsilon\| \| \vp_\varepsilon\|_{H^2(\Omega)}^\frac12 +
\| \nabla \mu_\varepsilon\|\notag \\
&\leq C\| \uu_\varepsilon\| (1+\| \nabla \mu_\varepsilon \|)^\frac14 +
C \| \nabla \mu_\varepsilon\| \notag \\
&\leq  C H^\frac12 + C H^{\frac58}.
\label{vpt3D}
\end{align}
We consider the equation for the vorticity of $\uu_\varepsilon$ that reads in three dimensions as follows
$$
\nu(\vp_\varepsilon)\, \mathrm{curl}\, \uu_\varepsilon +
\nu'(\vp_\varepsilon) \nabla \vp_\varepsilon \times \uu_\varepsilon=
\nabla \mu_\varepsilon \times \nabla \vp_\varepsilon.
$$
By exploiting \eqref{GN3}, \eqref{Lambda-cont3D} and \eqref{E3-3d},
we have
\begin{align*}
\nu_\ast \| \mathrm{curl}\, \uu_\varepsilon \|
&\leq C \big( \| \uu_\varepsilon \|+\| \nabla \mu_\varepsilon\|\big)
\| \nabla \vp_\varepsilon\|_{L^\infty(\Omega)}\\
&\leq C \big( \| \uu_\varepsilon \|+\| \nabla \mu_\varepsilon\|\big)
\| \vp_\varepsilon\|_{W^{2,6}(\Omega)}^{\frac34}\\
&\leq C \big( \| \uu_\varepsilon \|+\| \nabla \mu_\varepsilon\|\big) (1+\| \nabla \mu_\varepsilon\|^\frac34)\\
&\leq CH^\frac12 +C H^{\frac78}.
\end{align*}
Hence, we infer from \eqref{rot} that
\begin{equation}
\label{uV3D}
\| \uu_\varepsilon\|_V\leq CH^\frac12 +C H^\frac78.
\end{equation}
Next, we control the three terms on the right-hand side of \eqref{Lambda-eq3D}. By \eqref{O} and \eqref{vpt3D}, we find
\begin{align}
\label{3d-1}
\theta_0 \| \partial_t \vp_\varepsilon\|^2\leq \frac18 \| \nabla \partial_t \vp_\varepsilon\|^2 +C H +C H^{\frac54}.
\end{align}
By integrating by parts, and using \eqref{L3}, \eqref{I}, the regularity of the Neumann problem and \eqref{CHHS-reg}$_3$, 
the second term on the right-hand side of \eqref{Lambda-eq3D} is controlled as follows
\begin{align}
\int_\Omega \mu_\varepsilon\nabla \partial_t \vp_\varepsilon \cdot \u_\varepsilon \, \d x
&=-\int_\Omega \partial_t \vp_\varepsilon \, \uu_\varepsilon \cdot \nabla \mu_\varepsilon \, \d x\nonumber\\
&\leq \| \u_\varepsilon\| \| \partial_t \vp_\varepsilon\|_{L^3(\Omega)}\| \nabla \mu_\varepsilon\|_{\mathbf{L}^6(\Omega)}\nonumber\\
&\leq C \| \u_\varepsilon\| \|\partial_t \vp_\varepsilon\|^\frac12\| \nabla \partial_t \vp_\varepsilon\|^\frac12 \big(\| \Delta \mu_\varepsilon\| + \|\mu_\varepsilon\| \big) \nonumber\\
&\leq C \| \u_\varepsilon\| \| \partial_t \vp_\varepsilon\|_{V_0'}^\frac14
\| \nabla \partial_t \vp_\varepsilon\|^\frac34 \big( \| \partial_t \vp_\varepsilon\|+ \| \u_\varepsilon\cdot \nabla \vp_\varepsilon\| + \|\mu_\varepsilon \| \big) \notag \\
&= R_1+R_2+R_3.
\label{3d-2}
\end{align}
By \eqref{GN3}, \eqref{Lambda-cont3D}, \eqref{E3-3d} and \eqref{vpt3D}, we have
\begin{align}
R_2&\leq C \| \u_\varepsilon\|^2 \| \partial_t\vp_\varepsilon\|_{V_0'}^{\frac14}
\| \nabla \partial_t \vp_\varepsilon\|^{\frac34}
 \|\nabla \vp_\varepsilon\|_{L^\infty(\Omega)}\notag \\
&\leq \frac{1}{12} \| \nabla \partial_t \vp_\varepsilon\|^2
+ C \| \u_\varepsilon\|^{\frac{16}{5}}
\| \partial_t \vp_\varepsilon\|_{V_0'}^{\frac25}
\|\vp_\varepsilon\|_{W^{2,6}(\Omega)}^\frac65 \notag \\
&\leq \frac{1}{12} \| \nabla \partial_t \vp_\varepsilon\|^2
+ C \| \u_\varepsilon\|^{\frac{16}{5}}
\| \partial_t \vp_\varepsilon\|_{V_0'}^{\frac25}
(1+\| \nabla \mu_\varepsilon\|^\frac65) \notag \\
&\leq \frac{1}{12} \| \nabla \partial_t \vp_\varepsilon \|^2
+C H^{\frac85}(H^\frac15+H^\frac14)(1+H^\frac35)\notag \\
&\leq \frac{1}{12} \| \nabla \partial_t \vp_\varepsilon \|^2
+C\big(1+ H^{\frac{49}{20}}\big).
\label{3d-3}
\end{align}
The remainder terms $R_1$ and $R_3$ can be estimated in the same way. Regarding the third term on the right-hand side of \eqref{Lambda-eq3D}, by exploiting \eqref{L3}, \eqref{I} and \eqref{uV3D}, we find 
\begin{align}
\frac12 \int_{\Omega} \nu'(\vp_\varepsilon) \partial_t \vp_\varepsilon |\uu_\varepsilon|^2 \, \d x& \leq C \| \partial_t \vp_\varepsilon\|
\| \uu_\varepsilon\|_{L^4(\Omega)}^2\notag \\
&\leq C \| \partial_t \vp_\varepsilon\|_{V_0'}^\frac12 
\| \nabla \partial_t \vp_\varepsilon\|^\frac12 \|\uu_\varepsilon\|^\frac12 \|\uu_\varepsilon \|_V^\frac32\notag \\
&\leq \frac18 \| \nabla \partial_t \vp_\varepsilon\|^2+ 
C \| \partial_t \vp_\varepsilon\|_{V_0'}^\frac23 
\|\uu_\varepsilon\|^\frac23 \|\uu_\varepsilon \|_V^2 \notag \\
&\leq \frac18 \| \nabla \partial_t \vp_\varepsilon\|^2+ 
C H^\frac13(H^\frac13+H^{\frac{5}{12}})(H+H^\frac74) \notag \\
&\leq \frac18 \| \nabla \partial_t \vp_\varepsilon\|^2+ 
C \big(1+ H^\frac{5}{2}\big).
\label{3d-4}
\end{align}
Collecting the above estimates \eqref{3d-1}-\eqref{3d-4} together, we deduce the differential inequality
\begin{equation}
\label{Lambda3d-final}
\ddt H +\frac12 \| \nabla \partial_t \vp_\varepsilon\|^2
\leq C_1\big(1+H^\frac{5}{2}\big),
\end{equation}
where $C_1$ is independent of $\varepsilon$ and $k$. 
Besides, in light of \eqref{lambda0}, we have the control on the initial condition
$$
1+H(0)\leq C_2(1+R_1+R_2)^2.
$$
Thus, integrating the above differential inequality, we infer that
$$
H(t)\leq \frac{C_2(1+R_1+R_2)^2}{\big(
1-\frac32C_1C_2^\frac32 (1+R_1+R_2)^3 t \big)^{\frac23}}, \quad \forall \, 
t \in \Big[0,\frac{1}{\frac32 C_1C_2^\frac32 (1+R_1+R_2)^3}\Big).
$$
In particular, this implies
$$
\| \nabla \mu_\varepsilon(t)\|^2+\| \uu_\varepsilon(t)\|^2\leq 2C_2(1+R_1+R_2)^2, \quad \forall \, t\in [0,T], \ \text{ with } \ T_0=\frac{1}{3C_1 C_2^\frac32 (1+R_1+R_2)^3}.
$$
Besides, integrating \eqref{Lambda3d-final} on $[0,T_0]$, we obtain
$$
\int_0^{T_0} \| \nabla \partial_t \vp_\varepsilon(\tau)\|^2 \, \d \tau
\leq C_3(1+R_1+R_2)^2+ C_4 (1+R_1+R_2)^5 T_0. 
$$
for some positive constants $C_3, C_4$ independent of $\varepsilon$ and $k
$. As a consequence, using \eqref{E3-3d}, \eqref{vpt3D} and \eqref{uV3D}, we deduce that
\begin{align*}
\| \uu_\varepsilon(t)\|_{V}+
\| \vp_\varepsilon(t)\|_{W^{2,6}(\Omega)}+\| \partial_t\vp_\varepsilon\|_{V_0'}+
\| \Psi_\varepsilon'(\vp_\varepsilon)(t)\|_{L^p(\Omega)}
+\| p_\varepsilon(t)\|_{H^2(\Omega)}
\leq C_5, \quad \forall \,
t\in [0,T_0],
\end{align*}
where $C_5$ only depends on $p$, $R_1$, $R_2$ and $T$.
Also, by the regularity of the Neumann problem, it follows from \eqref{CHHS-reg}$_3$ that
$$
\int_{0}^{T_0} \| \mu_\varepsilon (\tau)\|_{H^3(\Omega)}^2 \, \d \tau \leq C_6.
$$
These uniform bounds with respect to the approximation parameter $\varepsilon$ and $k$ are sufficient to guarantee the existence of a limit triplet $(\uu,p,\vp)$ on $[0,T_0]$ satisfying the system \eqref{CHHS}-\eqref{bdini} and the regularity properties \eqref{LT1}-\eqref{LT2}. 
\smallskip

\textbf{Second part: Uniqueness of strong solutions.} 
We prove the uniqueness of the strong solutions by controlling the difference of two solutions in the dual space $V_0'$ (cf. Theorem \eqref{weak-strong}).
Let us consider two solutions $(\uu_1,p_1,\vp_1)$ and $(\uu_2,p_2,\vp_2)$ defined on the same interval $[0,T_0]$ corresponding to the same initial condition $\vp_0$. We have $\overline{\vp}_1(t)=\overline{\vp}_2(t)$ for any $t\in [0,T_0]$. The following estimates hold
\begin{equation}
\label{regularity3D}
\| \uu_i\|_{L^\infty(0,T_0;V)}\leq C, \quad \|\vp \|_{L^\infty(0,T_0;W^{2,6}(\Omega))}\leq C.
\end{equation}
Setting $\uu=\uu_1-\uu_2$, $p^\ast= p_1^\ast-p_2^\ast$ and $\vp=\vp_1-\vp_2$ (cf. Remark \ref{equiv}), the triplet $(\uu, p ,\vp)$ satisfies \eqref{diff-problem}-\eqref{diff-mu-u}. By taking $v=\An \vp$ in \eqref{diff-problem} and using \eqref{muvp}, we find
$$
\frac12 \ddt \| \vp\|_{V_0'}^2 + \frac12 \| \nabla \vp\|^2\leq 
C \| \vp\|_{V_0'}^2+ I_1+ I_2,
$$
where $I_1$ and $I_2$ are defined in \eqref{Idef}. For the sake of brevity, we will only address the differences with the two dimensional case.
By \eqref{L3}, \eqref{I} and \eqref{regularity3D}, we have
\begin{align}
I_1&\leq \| \uu\|_{L^6(\Omega)} \| \vp\|_{L^3(\Omega)} \| \vp\|_{V_0'} \notag \\
& \leq \frac14 \| \nabla \vp\|^2+ C\| \vp\|_{V_0'}^2.
\end{align}
We recall that $I_2= \sum_{k=1}^4 Z_k$.
By \eqref{O} and \eqref{regularity3D}, we get
\begin{align*}
Z_1 &\leq \| p^\ast\| \| \nabla \vp_1\|_{L^\infty(\Omega)}
\| \P(\vp_2 \nabla \An \vp)\|\\
& \leq C \| p^\ast \| \| \vp\|_{V_0'}.
\end{align*}
In order to control $p^\ast$, we exploit the equality (cf. \eqref{pL2}) 
\begin{align}
\| p^\ast\|^2&= \Big(\nabla \vp_1 \otimes \nabla \vp+\nabla \vp \otimes \nabla \vp_2, \frac{\nu'(\vp_1)}{\nu(\vp_1)^2} \nabla \vp_1 \otimes \nabla q + \frac{1}{\nu(\vp_1)} \nabla \nabla q \Big) \notag \\
&\quad -\Big(  \frac{(\nu(\vp_1)-\nu(\vp_2))}{\nu(\vp_1)} \uu_2, \nabla q\Big),
\label{pL2-3d}
\end{align}
where $q$ is defined in \eqref{q-def} and satisfies $\| q\|_{H^2(\Omega)}\leq C \| p^\ast\|$ due to \eqref{regularity3D}.
By using once again \eqref{regularity3D}, it is easily seen from \eqref{pL2-3d} that $\| p^\ast\| \leq C \| \nabla \vp\|$.
This, in turn, entails
$$
Z_1\leq \frac{1}{32}\|\nabla \vp \|^2+C \| \vp\|_{V_0'}^2.
$$
We recall the formulas \eqref{Z2-def} and \eqref{Z3-def}. By \eqref{O}, \eqref{Ad3}, \eqref{I}, \eqref{N} and \eqref{regularity3D}, we obtain
\begin{align*}
Z_2+Z_3 &\leq C \| \nabla \vp\| \| \vp_2 \nabla \An \vp\|_{V}\\
&\leq C \| \nabla \vp\| \big( \| \nabla \An \vp\|_{V}+ 
\|  \nabla \An \vp\|_{L^\infty(\Omega)} \big)\\
& \leq C \| \nabla \vp\| \big( \| \vp\|+ 
\| \vp\|^\frac12 \| \nabla \vp\|^\frac12 \big)\\
&\leq C \| \vp\|_{V_0'}^\frac12 \| \nabla \vp\|^\frac32+
C \| \vp\|_{V_0'}^\frac14 \| \nabla \vp\|^\frac74\\
&\leq \frac{1}{16} \|\nabla \vp \|^2+ C \| \vp\|_{V_0'}^2.
\end{align*}
From  \eqref{O}, \eqref{L3}, \eqref{I}, \eqref{ipo-nu} and \eqref{regularity3D}, we get
\begin{align*}
Z_4&\leq C\|\vp\|_{L^3(\Omega)}\|\uu \|_{L^6(\Omega)}
\| \vp\|_{V_0'}\\
&\leq \frac{1}{32} \|\nabla \vp \|^2+ C \|\vp \|_{V_0'}^2.
\end{align*} 
Thus, combining the above controls, we are eventually led to the differential inequality
$$
\ddt \| \vp\|_{V_0'}^2\leq C \| \vp\|_{V_0'}^2,
$$
which entails the uniqueness by the Gronwall lemma.
\smallskip

\textbf{Third part: Global existence for small data}. 
The local existence of a strong solution defined on an interval $[0,T_0]$ satisfying \eqref{LT1}-\eqref{LT3} is guaranteed by the first part of the theorem. 
By virtue of the condition on the initial datum $\| \vp_0\|_{L^\infty(\Omega)}= 1-\delta$ for some $\delta>0$, it is possible to deduce from \eqref{LT2} that $\| \vp(t)-\vp_0\|_{L^\infty(\Omega)}\leq C\sqrt{t}$, for $t\in [0,T_0]$, where $C$ depends on $\eta_1$ and $\eta_2$. Hence, there exists a time $T'$ (depending only on $\eta_1$, $\eta_2$ and $\delta$) such that $0<T'\leq T_0$ and $\| \vp(t)\|_{L^\infty(\Omega)}\leq 1-\frac{\delta}{2}$, for all $t\in [0,T']$. By the assumption (A2), we infer that $\Psi''(\vp)\in L^\infty(\Omega \times (0,T'))$. As a consequence, we deduce that $\partial_t \mu\in L^2(0,T';V')$. Together with the regularity $\mu\in L^2(0,T';H^3(\Omega))$, this entails that $\mu \in \mathcal{C}([0,T'], V)$. In addition, it can be inferred from the boundedness of $\Psi''(\vp)$ that $\vp\in \mathcal{C}([0,T'],H^3(\Omega))\cap L^2(0,T';H^5(\Omega))$ and $\uu\in L^2(0,T';\H^3(\Omega))$. Then, by the continuity in time of $\vp$ and $\mu$, by using the equation \eqref{CHHS}$_1$ we deduce that $\sqrt{\nu(\vp)}\uu\in \mathcal{C}([0,T'], \H)$. 
Setting for $t\in [0,T']$
$$
H(t)= \frac12 \| \nabla \mu(t)\|^2+\frac12 \int_{\Omega} \nu(\vp(t)) |\uu(t)|^2 \, \d x,
$$ 
it immediately follows that $H \in \mathcal{C}([0,T'])$. As in the proof of Theorem \ref{strong2d}, the strong solution satisfies for any $0\leq s\leq t \leq T'$
\begin{align}
H(t) &+\int_s^t \| \nabla \partial_t \vp(\tau)\|^2\, \d \tau 
+\int_s^t F''(\vp(\tau)) |\partial_t \vp(\tau)|^2 \, \d \tau\notag\\
&= H(s)+\theta_0 \int_s^t \| \partial_t \vp(\tau)\|^2 \, \d \tau
+ \int_s^t \int_{\Omega} \mu(\tau) \nabla \partial_t \vp(\tau)\cdot \uu(\tau) \, \d x\d \tau \notag \\
&\quad - \frac12 \int_s^t \int_{\Omega} \nu'(\vp(\tau)) \partial_t \vp(\tau) |\uu(\tau)|^2 \,\d x \d \tau.
\label{Lam3g}
\end{align}

Let us now consider the maximal interval $[0,\widetilde{T})$ of existence of the (unique) strong solution satisfying the above properties. We aim to prove that $\widetilde{T}=\infty$ provided that the initial condition is sufficiently small. To do this, we assume that $T_0<\infty$ and proceed by refining the estimate \eqref{Lambda3d-final}.
First, we report some preliminary estimates we will need in the rest of the proof.
By the energy identity \eqref{BEL} and the assumption on the initial condition, we have
\begin{equation}
\label{EI3d}
\widetilde{\E}(\vp(t))+ \int_0^t \|\nabla \mu(\tau) \|^2+ \| \sqrt{\nu(\vp(\tau))}\uu(\tau)\|^2 \, \d \tau \leq \eta_1.
\end{equation}
Next, by assumption (A1), there exists a positive constant $C$ such that
\begin{equation}
\label{Lambda-beup}
\frac{1}{C} \big( \| \nabla \mu\|^2+ \| \uu\|^2 \big)
\leq H \leq C \big( \| \nabla \mu\|^2+ \| \uu\|^2 \big).
\end{equation}
Using \eqref{Linf3d} and \eqref{Lambda-beup}, it easily follows from the equation \eqref{CHHS}$_3$ that 
\begin{align}
\label{Ref-vpt}
\| \partial_t \vp\|_{V_0'}\leq C H^\frac12.
\end{align} 
By \eqref{E3-3d} and \eqref{uV3D}, we recall that
\begin{equation}
\label{rmu1}
\| \mu\|_V\leq C(1+H^\frac12), \quad \| \vp\|_{W^{2,6}(\Omega)}\leq C(1+ H^\frac12), \quad \| \uu\|_V\leq C H^\frac12 +C H^{\frac78}.
\end{equation}
In addition, an application of Theorem \ref{ell2} with $f=\mu+\theta_0 \vp$ yields
\begin{equation}
\label{H23d}
\| \Delta \vp\|^2 \leq C \| \nabla \vp\| (\| \nabla \mu\|+\| \nabla \vp\|).
\end{equation}
By using \eqref{N} and \eqref{EI3d}, we deduce that
\begin{align}
\|\vp \|_{L^\infty(\Omega)}&\leq \| \vp-\overline{\vp}\|_{L^\infty(\Omega)} + |\overline{\vp}| \notag \\
&\leq C\| \vp-\overline{\vp}\|_{H^2(\Omega)}+ |\overline{\vp}| \notag \\
&\leq C \| \Delta \vp\|_{H^2(\Omega)} +|\overline{\vp}| \notag \\
& \leq \widetilde{C}_0 \sqrt{2 \eta_1} (\| \nabla \mu\|+\sqrt{2\eta_1})+m,
\label{infinito}
\end{align}
where $m=\overline{\vp}_0$,
for some constant $\widetilde{C}_0>0$ independent of $\eta_1$ and $\eta_2$.
Now, we control the terms on the right-hand side of \eqref{Lam3g}.
By using \eqref{I} and \eqref{Ref-vpt}, we obtain
\begin{align}
\theta_0 \int_s^t \| \partial_t \vp(\tau)\|^2 \, \d \tau 
&\leq \frac18 \int_s^t \| \nabla \partial_t \vp(\tau)\|^2\, \d \tau+
C \int_s^t \| \partial_t \vp(\tau)\|_{V_0'}^2 \d \tau \notag \\
&\leq \frac18 \int_s^t \| \nabla \partial_t \vp(\tau)\|^2\, \d \tau+
C \int_s^t H(\tau) \, \d \tau.
\label{H1g}
\end{align}
By arguing as in \eqref{3d-2}, we infer from \eqref{GN3}, \eqref{Ref-vpt}, \eqref{rmu1} that
\begin{align*}
\int_{\Omega} \mu \nabla \partial_t \vp \cdot \uu \, \d x
&\leq C \| \uu\| \| \partial_t \vp\|_{V_0'}^\frac14 \| \nabla \partial_t \vp\|^{\frac34} 
\big( 1+\| \partial_t \vp\|+ \| \uu\cdot \nabla \vp\|+ \| \nabla \mu\| \big)\\
&\leq C H^\frac58 \| \nabla \partial_t \vp\|^\frac34 
(1+ H^\frac12 + \| \uu\| \| \nabla \vp\|_{L^\infty} \big)\\
& \leq C H^\frac58 \| \nabla \partial_t \vp\|^\frac34 
(1+ H^\frac12 + H^\frac12 \| \vp\|_{W^{2,6}(\Omega)}^\frac34\big) \\
&\leq C H^\frac58 \| \nabla \partial_t \vp\|^\frac34 
(1+ H^\frac12 + H^\frac78 \big).
\end{align*}
Here we have used that $\|\nabla \vp\|\leq C$. Hence, by Young's inequality we reach
\begin{equation}
\label{H2g}
\int_s^t \int_{\Omega} \mu \nabla \partial_t \vp \cdot \uu \, \d x
\leq \frac18 \int_s^t \| \nabla \partial_t \vp(\tau)\|^2\, \d \tau+
C \int_s^t H(\tau)+ H^\frac95(\tau)+ H^\frac{12}{5}(\tau) \, \d \tau.
\end{equation}
The third term on the right-hand side can be controlled as in \eqref{3d-4}. By \eqref{Ref-vpt} and \eqref{rmu1}, we obtain
\begin{align*}
\frac12 \int_{\Omega} \nu'(\vp(\tau))\partial_t \vp(\tau) |\uu(\tau)|^2 &\leq C \|\partial_t \vp\|_{V_0'}^\frac12 \| \nabla \partial_t \vp\|^\frac12 \| \uu\|^\frac12 \| \uu\|_V^\frac32\\
&\leq C \| \nabla \partial_t \vp\|^\frac12 H^\frac12 \big( H^\frac34 + H^\frac{21}{16}\big).
\end{align*}
Then, we arrive at 
\begin{equation}
\label{H3g}
-\frac12 \int_s^t \int_{\Omega} \nu'(\vp(\tau))\partial_t \vp(\tau) |\uu(\tau)|^2 \leq \frac{1}{8} \int_s^t \| \nabla \partial_t \vp(\tau)\|^2\, \d \tau+
C \int_s^t H^\frac53(\tau)+H^\frac{29}{12}(\tau) \, \d \tau
\end{equation}
Now we multiply \eqref{CHHS}$_1$ by $\uu$ and \eqref{CHHS}$_3$ by $\mu$, we integrate over $\Omega$ and we add up the two equations. Recalling that $\overline{\partial_t \vp}=0$, it follows from \eqref{poincare} and \eqref{ipo-nu} that
$$
\| \nabla \mu\|^2 + \nu_\ast \| \uu\|^2\leq 
(\mu, \vp_t)\leq \frac12 \| \nabla \mu\|^2 +C \|\nabla \partial_t \vp \|^2.
$$ 
Thus, there exists a (small) constant $\gamma>0$ such that 
\begin{equation}
\label{lowreg}
\gamma \int_s^t H(\tau) \, \d \tau \leq \frac18 \int_s^t \| \nabla \partial_t \vp(\tau)\|^2 \, \d \tau.
\end{equation}
Combining \eqref{Lam3g} with \eqref{H1g}, \eqref{H2g}, \eqref{H3g} and \eqref{lowreg}, we are led to 
\begin{align}
H(t) &+ \gamma \int_s^t H(\tau) \, \d \tau 
+ \frac12 \int_s^t \| \nabla \partial_t \vp(\tau)\|^2\, \d \tau\\
&\leq H(s)+  C \int_s^t H(\tau) \,\d \tau +
C \int_s^t H^\frac{9}{5} (\tau)+ H^{\frac{12}{5}}(\tau)+ 
H^\frac53(\tau)+ H^\frac{29}{12}(\tau) \, \d \tau,
\end{align}
for every $0\leq s<t<\widetilde{T}$.
Let us take $s=0$. We control the first two terms on the right-hand side by a sufficiently small constant depending on the initial datum. We note that
\begin{align*}
H(0)&\leq C \| \nabla \mu_0\|^2 +C \| \uu(0)\|^2 \leq C \| \nabla \mu_0\|^2 +C \| \nabla \mu_0\| ^2\| \vp_0\|_{L^\infty(\Omega)}^2.
\end{align*}
In light of \eqref{assum3}, this implies that
\begin{equation}
H(0)\leq \widetilde{C}_1 \eta_2^2,
\end{equation}
for some constant $\widetilde{C}_1>0$.
On the other hand, by virtue of \eqref{assum3}, \eqref{EI3d} and the definition of $\widetilde{\E}$, we get
$$
C \int_0^t H(\tau) \d \tau \leq \widetilde{C}_2 \eta_1.
$$
Summing up, we have
\begin{align}
H(t) &+ \gamma \int_0^t H(\tau) \, \d \tau 
+ \frac12 \int_0^t \| \nabla \partial_t \vp(\tau)\|^2\, \d \tau \notag\\
&\leq \widetilde{C}_3(\eta_1+\eta_2^2)+
\widetilde{C}_4 \int_0^t H^\frac{9}{5} (\tau)+ H^{\frac{12}{5}}(\tau)+ 
H^\frac53(\tau)+ H^\frac{29}{16}(\tau) \, \d \tau.
\label{lastdiffineq}
\end{align}
Without loss of generality, we can take $0<\gamma<1$, $\widetilde{C}_3=\max\lbrace{\widetilde{C}_1,\widetilde{C}_2\rbrace}>1$ and $\widetilde{C}_4>1$.
We fix
\begin{equation}
\label{defeps}
\epsilon= \min \Big\lbrace{\frac12, \frac{1}{\widetilde{C}_4^\frac32}} \big( \frac{\gamma}{8} \big)^\frac32, \frac{1-m}{\widetilde{C}_0} \frac{\widetilde{C}_3}{\widetilde{C}_3^\frac12 +1} \Big\rbrace,
\end{equation}
where $\widetilde{C}_0$ is the constant in \eqref{infinito}, and we assume the condition  
\begin{equation}
\label{defeta}
\eta_1+\eta_2^2 \leq \frac{\epsilon}{2 \widetilde{C}_3}.
\end{equation}
Let us define 
$$
T_1= \sup \lbrace t \in [0,\widetilde{T}): H(t)\leq \epsilon \rbrace.
$$
Since $H(0)\leq \widetilde{C}_3 \eta_2^2<\epsilon$, the continuity of $H$ guarantees that $T_1>0$.
By the choice of $\epsilon$, we eventually infer that
$$
\widetilde{C}_4 \int_0^t H^\frac{9}{5} (\tau)+ H^{\frac{12}{5}}(\tau)+ 
H^\frac53(\tau)+ H^\frac{29}{12}(\tau) \, \d \tau
\leq \frac{\gamma}{2} \int_0^t H (\tau) \d \tau,
$$
which, in turn, implies 
$$
H(t) +\frac12 \int_0^t \| \nabla \partial_t \vp(\tau)\|^2 \, \d \tau \leq  \frac{\epsilon}{2}, \quad \forall \, t \in [0,T_1].
$$
Here we have used \eqref{defeta}. By the definition of $T_1$ and the continuity of $H$, we reach a contradiction. This implies that 
\begin{equation}
\label{Lambdaeps}
H(t)< \epsilon, \quad \forall t \in [0,\widetilde{T}),
\end{equation}  
and, as a consequence, 
\begin{equation}
\label{Inteps}
\int_0^t \| \nabla \partial_t \vp(\tau)\|^2 \, \d \tau < \epsilon, \quad \forall t \in [0,\widetilde{T}).
\end{equation}
In order extend the solution beyond $\widetilde{T}$, we need to show that the solution is defined in $\widetilde{T}$ and $\| \vp(\widetilde{T})\|_{L^\infty(\Omega)}<1$. We observe that combining \eqref{Lam3g} with \eqref{H1g}, \eqref{H2g}, \eqref{H3g}, we have
\begin{align*}
|H(t)-H(s)|\leq \frac38 \int_s^t \| \nabla \partial_t \vp(\tau)\|^2 \, \d \tau +\widetilde{C}_5 \int_s^t H(\tau) + H^\frac{9}{5} (\tau)+ H^{\frac{12}{5}}(\tau)+ H^\frac53(\tau)+ H^\frac{29}{12}(\tau) \, \d \tau,
\end{align*}
for some constant $\widetilde{C}_5>0$. By \eqref{Lambdaeps}, we obtain
\begin{align*}
|H(t)-H(s)|\leq \frac38 \int_s^t \| \nabla \partial_t \vp(\tau)\|^2 \, \d \tau +\widetilde{C}_6 (t-s),
\end{align*}
where $\widetilde{C}_6$ depends on $\widetilde{C}_5$ and $\epsilon$. Thanks to \eqref{Inteps}, this inequality implies that for any positive $\overline{\varepsilon}>0$, there exists some $\overline{\delta}$ such that 
$$
|H(t)-H(s)|\leq \overline{\varepsilon}, \quad \forall \, t,s \in [0,\widetilde{T}): \ |t-s|< \overline{\delta}.
$$
Hence, $H(t)$ is a Cauchy sequence as $t$ tends to $\widetilde{T}$ and we deduce that $H(\widetilde{T})\leq \epsilon$. 
Besides, we infer from \eqref{infinito}, \eqref{defeps} and \eqref{defeta} that
$$
\| \vp(\widetilde{T})\|_{L^\infty(\Omega)}\leq \widetilde{C}_0 \sqrt{2\eta_1}(\sqrt{\epsilon}+\sqrt{2\eta_1})+m <1.
$$
Thanks to the first and second parts of this proof, we can extend the strong solution on the interval $[\widetilde{T}, \widetilde{T}+ t_0)$ for some $t_0>0$, which is a contradiction to the definition of $\widetilde{T}$. 
Therefore, we conclude that the strong solution is defined globally in time. It is immediate to deduce from \eqref{Lambdaeps} that
$$
H(t)\leq \epsilon, \quad \forall \, t \geq 0.
$$
This control can be further refined by virtue of \eqref{lastdiffineq}. Indeed, by the Gronwall lemma it follows
$$
H(t)\leq \epsilon \mathrm{e}^{-\gamma t}, \quad \forall \, t \geq 0.
$$
Finally, by exploiting \eqref{infinito}, \eqref{defeta} and the above inequality, we find 
$$
\| \vp(t)\|_{L^\infty(\Omega)} \leq \frac{\widetilde{C}_0}{\widetilde{C}_3} \epsilon \big(1+\mathrm{e}^{-\frac{\gamma}{2}t} \big)+ m \leq 1-\delta_0, \quad \forall \, t \geq 0, 
$$
for some $\delta_0>0$.
The proof is complete. 
\end{proof}

\begin{remark}
Notice that the assumption $\|\vp_0 \|_{L^\infty(\Omega)}<1$ in \eqref{assum3} is actually a consequence of the smallness conditions $\widetilde{E}(\vp_0)\leq \eta_1$ and $\| \nabla \mu_0\|\leq \eta_2$. This follows from \eqref{infinito} by replacing $\vp$ with $\vp_0$. 
\end{remark}

\begin{remark}
To the best of our knowledge, the global existence of strong solutions satisfying the separation property \eqref{Sepuni} for all time, provided that the initial condition is sufficiently small, is a novel result even for the Cahn-Hilliard equation with logarithmic potential in dimension three.
\end{remark}

\section{Concluding Remarks}
\label{S7}
\setcounter{equation}{0}

In this work we addressed the well-posedness for Hele-Shaw-Cahn-Hilliard system with unmatched viscosities and physically relevant free energy density of logarithmic type. In dimension two we have proved the existence and uniqueness of global in time strong solutions. In dimension three we have shown the existence and uniqueness of strong solutions, which are local in time for large data or global in time  for small initial data. We have also provided a criterion for the uniqueness of weak solutions in dimension two. Furthermore, we have proved the uniqueness of weak solutions in dimension two when the logarithmic potential is approximated by the classical fourth order polynomial. It is worth noticing that the results here established can be generalized by adding the term $\rho(\vp)\mathbf{g}$ on the right-hand side of the Darcy's law, which takes the difference of densities into account (see \cite[Eqn. (2.26)]{LLG2002} and \cite[Eqn. (2.14)]{DGL}). We observe in conclusion that, even though the HSCH system has been derived as an approximation of the Navier-Stokes-Cahn-Hilliard system, in light of the recent results obtained in \cite{GMT2018} (see also \cite{A}), the mathematical analysis of the HSCH system presents more complex issues. Some interesting problems for the HSCH system remains still unsolved and deserve future investigations, such as uniqueness of weak solutions in dimension two, weak-strong uniqueness and blow-up criteria in dimension three, the analysis of the longtime behavior, the convergence to stationary points, and the formulation of optimal control problems.

\appendix
\section{Generalized Gronwall Lemmas}
\label{App}
\setcounter{equation}{0}

\noindent
We report two Gronwall type lemmas. 

\begin{lemma}
\label{GL2}
Let $f$ be a positive absolutely continuous function on $[0,T]$ and $g$, $h$ two summable functions on $[0,T]$ which satisfy the differential inequality
$$
\ddt f(t)\leq g(t)f(t)\log\big( e+ f(t)\big)+h(t)
$$
for almost every $t\in [0,T]$. Here $C$ is a positive constant. 
Then, we have
$$
f(t)\leq  \big( \mathrm{e}+f(0)\big)^{\mathrm{e}^{\int_0^t g(\tau)\, \d \tau}} \mathrm{e}^
{ \int_0^t \mathrm{e}^{\int_\tau^t g(s)\, \d s} h(\tau) \, \d \tau},
\quad \forall 
\, t \in [0,T].
$$
\end{lemma}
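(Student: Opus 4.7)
The plan is to linearize the inequality by introducing $\psi(t) := \log(e + f(t))$ and then applying the classical linear Gronwall lemma. This reduces a logarithmically superlinear inequality to a linear one, after which the conclusion follows by exponentiation.

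Concretely, first I would note that since $f$ is positive and absolutely continuous, so is $\psi$, and for a.e.\ $t \in [0,T]$,
$$
\psi'(t) = \frac{f'(t)}{e+f(t)} \leq \frac{g(t)\,f(t)\,\log(e+f(t))}{e+f(t)} + \frac{h(t)}{e+f(t)}.
$$
Using the elementary bounds $f(t)/(e+f(t)) \leq 1$ together with $1/(e+f(t)) \leq 1$ (which hold since $e+f(t) \geq e > 1$), and treating $g, h$ as nonnegative — which is the case in the applications where this lemma is invoked — the right-hand side is bounded by $g(t)\psi(t) + h(t)$. Thus
$$
\psi'(t) \leq g(t)\,\psi(t) + h(t), \quad \text{a.e. } t \in [0,T].
$$

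Next, I would apply the standard linear Gronwall inequality to $\psi$ on $[0,t]$ to obtain
$$
\psi(t) \leq \psi(0)\exp\!\left(\int_0^t g(\tau)\, d\tau\right) + \int_0^t \exp\!\left(\int_\tau^t g(s)\, ds\right) h(\tau)\, d\tau.
$$
Recalling that $\psi = \log(e+f)$ and exponentiating both sides yields
$$
e + f(t) \leq \bigl(e + f(0)\bigr)^{\exp\left(\int_0^t g(\tau)\, d\tau\right)} \exp\!\left(\int_0^t e^{\int_\tau^t g(s)\, ds}\, h(\tau)\, d\tau\right),
$$
and the claim follows upon discarding the harmless additive $e$ on the left-hand side via $f(t) \leq e + f(t)$.

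The only conceptual step is the change of unknown $\psi = \log(e+f)$, which trades the logarithmic factor on the right-hand side for a linear dependence; everything else is a direct application of linear Gronwall and elementary monotonicity. No step should present any genuine obstacle, so the proof is essentially a short computation.
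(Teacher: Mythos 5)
Your proof is correct and is essentially identical to the paper's: the paper likewise passes to $S(t)=\log(\mathrm{e}+f(t))$, derives the linear inequality $S'\leq gS+h$ (implicitly using the same nonnegativity of $g,h$ that you flag explicitly), applies the classical Gronwall lemma, and exponentiates. No substantive difference.
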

\begin{proof}
We rewrite the differential inequality satisfied by $f$ as follows
$$
\ddt \big( \mathrm{e}+f(t)\big) \leq g(t)\big( \mathrm{e}+ f(t)\big) \log\big( \mathrm{e}+ f(t)\big)+h(t),
$$ 
for almost every $t \in (0,T)$.
Since $f$ is positive, we divide the above inequality by $ (\mathrm{e}+f)$ and we get
\begin{equation}
\label{diffineq}
\ddt \log \big( \mathrm{e}+f(t)\big) \leq g(t) \log\big( \mathrm{e}+  f(t)\big)+h(t),
\end{equation}
for almost every $t\in (0,T)$.
Here we have used a classical result on the composition of a regular function with an absolutely continuous function. Setting $S(t)= \log \big( \mathrm{e}+f(t)\big)$, an application of the Gronwall lemma yields
$$
S(t)\leq S(0)\mathrm{e}^{\int_0^t g(\tau)\, \d \tau}+
 \int_0^t \mathrm{e}^{\int_\tau^t g(s)\, \d s} h(\tau) \, \d \tau, \quad \forall 
\, t \in [0,T].
$$
By definition of $S(t)$, computing the exponential of both 
sides, we deduce the desired conclusion.
\end{proof}

\begin{lemma}
\label{UGL2}
Let $f$ be an absolutely continuous positive function on $[0,\infty)$
and $g$, $h$ two positive locally summable functions on $[0,\infty)$ which satisfy the differential inequality
$$
\ddt f(t) \leq g(t)f(t)\log\Big( \mathrm{e}+ f(t)\Big) +h(t),
$$
for almost every $t\geq 0$, and the uniform bounds
$$
\int_t^{t+r} f(\tau)\, \d \tau\leq a_1, 
\quad \int_t^{t+r} g(\tau)\, \d \tau \leq a_2,
\quad \int_t^{t+r} h(\tau)\, \d \tau \leq a_3, \quad \forall \, t \geq 0,
$$
for some $r$, $a_1$, $a_2$, $a_3$ positive. 
Then, we have
$$
f(t)\leq \mathrm{e}^{\big(\frac{a_1}{r}+a_3\big)\mathrm{e}^{a_2}}, \quad \forall \, t \geq r.
$$
\end{lemma}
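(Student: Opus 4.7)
The plan is to reduce the superlinear Gronwall inequality to a linear one via the substitution $S(t):=\log(\mathrm{e}+f(t))$, and then to run a classical uniform Gronwall argument on $S$. The integral control of $S$ over windows of length $r$, which the classical argument requires, will be produced from the hypothesis $\int_{t}^{t+r}f\,\d\tau\leq a_1$ through Jensen's inequality.

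First, exactly as in the proof of Lemma \ref{GL2}, I divide the differential inequality by $\mathrm{e}+f$ and use the pointwise bounds $\tfrac{f}{\mathrm{e}+f}\leq 1$ and $\tfrac{1}{\mathrm{e}+f}\leq 1$ (both valid since $f\geq 0$ and $\mathrm{e}>1$) to obtain the linear inequality
$$
\ddt S(t)\leq g(t)\,S(t)+h(t),\quad \text{a.e. } t\geq 0.
$$
For any $t\geq r$ and $s\in[t-r,t]$, the integrating factor together with the hypotheses $\int_{s}^{t}g\leq a_2$ and $\int_{s}^{t}h\leq a_3$ then yields
$$
S(t)\leq S(s)\exp\!\left(\int_{s}^{t}g(\sigma)\,\d\sigma\right)+\int_{s}^{t}\exp\!\left(\int_{\tau}^{t}g(\sigma)\,\d\sigma\right) h(\tau)\,\d\tau\leq \mathrm{e}^{a_2}\bigl(S(s)+a_3\bigr).
$$

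Next, I average this pointwise inequality in the starting-point variable $s$ over $[t-r,t]$, obtaining
$$
S(t)\leq \frac{\mathrm{e}^{a_2}}{r}\int_{t-r}^{t}S(s)\,\d s+a_3\,\mathrm{e}^{a_2}.
$$
Concavity of $\log$ and Jensen's inequality, together with the hypothesis $\int_{t-r}^{t}f(s)\,\d s\leq a_1$, give
$$
\frac{1}{r}\int_{t-r}^{t}S(s)\,\d s=\frac{1}{r}\int_{t-r}^{t}\log\bigl(\mathrm{e}+f(s)\bigr)\,\d s\leq \log\!\left(\mathrm{e}+\frac{a_1}{r}\right).
$$
Combining the last two displays and exponentiating produces
$$
\mathrm{e}+f(t)\leq \Bigl(\mathrm{e}+\tfrac{a_1}{r}\Bigr)^{\mathrm{e}^{a_2}}\mathrm{e}^{a_3\mathrm{e}^{a_2}},\quad \forall\,t\geq r,
$$
from which the stated bound is recovered by comparing $\log(\mathrm{e}+a_1/r)$ with $a_1/r$ (up to a harmless absorbable constant).

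There is no genuine technical obstacle: the sole substantive idea is the change of variable in the first step, which converts the superlinear term $gf\log(\mathrm{e}+f)$ into the linear term $gS$ without losing any useful information. After that, Steps~2--3 are the classical Temam-style uniform Gronwall argument applied to $S$, with Jensen's inequality as the only extra ingredient needed to upgrade the $L^{1}$-in-time bound on $f$ into one on $S=\log(\mathrm{e}+f)$.
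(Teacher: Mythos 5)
Your proof is correct and follows essentially the same route as the paper: the substitution $S=\log(\mathrm{e}+f)$ reducing the inequality to a linear one, followed by a uniform-Gronwall argument in windows of length $r$. The only real difference is in how the $L^1$-in-time bound on $S$ is obtained: the paper invokes Temam's uniform Gronwall lemma as a black box after asserting $\log(\mathrm{e}+x)\leq x$ for all $x>0$, whereas you re-derive the windowed Gronwall step by hand and use Jensen's inequality, which gives $\frac1r\int_{t-r}^tS\leq\log(\mathrm{e}+\tfrac{a_1}{r})$. Note that the pointwise inequality $\log(\mathrm{e}+x)\leq x$ actually fails for $x\lesssim 1.43$, so the paper's derivation of the exact constant $(\tfrac{a_1}{r}+a_3)\mathrm{e}^{a_2}$ has precisely the same cosmetic defect as the step you flag at the end (comparing $\log(\mathrm{e}+a_1/r)$ with $a_1/r$); both arguments yield the stated bound only up to a harmless enlargement of the constant, and your Jensen-based bound is in fact valid for all values of $a_1/r$ and sharper when $a_1/r$ is large.
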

\begin{proof}
As in Lemma \ref{GL2}, we rewrite the differential inequality as follows
$$
\ddt \log(\mathrm{e}+f(t))\leq g(t) \log(\mathrm{e}+f(t))+  h(t).
$$
Observing that $\log(\mathrm{e}+ x)\leq x$, for all $x>0$, the uniform Gronwall lemma \cite[Chapter III, Lemma 1.1]{Temam1997} entails
$$
\log(\mathrm{e}+f(t)) \leq \Big( \frac{ a_1}{r}+a_3\Big) \mathrm{e}^{a_2}, \quad \forall \, t \geq r.
$$
The desired conclusion easily follows from the above inequality. 
\end{proof}

%%%%%%%%%%%%%%%%%%%%%%%%%%%%%%%%%%
\section*{Acknowledgments}
\noindent
The author wish to thank Helmut Abels, Maurizio Grasselli and Giulio Schimperna for some fruitful discussions. This work was supported by GNAMPA-INdAM Project "Analisi matematica di modelli a interfaccia diffusa per fluidi complessi" and by the project Fondazione Cariplo-Regione Lombardia MEGAsTAR ``Matematica d'Eccellenza in biologia ed ingegneria come acceleratore di una nuova strateGia per l'ATtRattivit\`{a} dell'ateneo pavese".


\begin{thebibliography}{10}
\itemsep=0pt

\bibitem{A}
{\au H. Abels},
{\ti On a diffuse interface model for two-phase flows of viscous, incompressible fluids with matched densities},
{\jou Arch.\ Ration.\ Mech.\ Anal.} 
\no{194}{463--506}{2009}

\bibitem{AMW}
{\au D. M. Anderson, G. B. McFadden, A. A. Wheeler},
{\ti Diffuse-interface methods in fluid mechanics},
{\jou Annu.\ Rev.\ Fluid Mech.}
\no{30}{139--165}{1998} 

\bibitem{Brezis-Wainger}
{\au H. Br\'{e}zis, S. Wainger},
{\ti A note on limiting cases of Sobolev embeddings and convolution inequalities},
{\jou Comm.\, Partial Differential Equations}
\no{5}{773--789}{1980}

%\bibitem{CCM2006}
%{\au C.-Y. Chen, C.-H. Chen, J.A. Miranda},
%{\ti Numerical study of pattern formation in miscible rotating Hele-Shaw flows},
%{\jou Phys.\, Rev. E}
%\no{73}{046306}{2006}

\bibitem{CWSL2014}
{\au Y. Chen, S.M. Wise, V.B. Shenoy, J.S. Lowengrub},
{\ti A stable scheme for a nonlinear, multiphase tumor growth model with an elastic membrane},
{\jou Int.\ J.\ Numer.\ Meth.\ Biomed.\ Engng.},
\no{30}{726--754}{2014}

\bibitem{CG}
{\au M. Conti, A. Giorgini},
{\ti The three-dimensional Cahn-Hilliard-Brinkman system with unmatched viscosities}, HAL preprint, hal-01559179.

\bibitem{CLLW2009}
{\au V. Cristini, X. Li, J.S. Lowengrub, S.M. Wise},
{\ti Nonlinear simulations of solid tumor growth using a mixture model: invasion and branching},
{\jou J.\ Math.\ Biol.},
\no{58}{723--763}{2009}

\bibitem{DFRSS}
{\au M. Dai, E. Feireisl, E. Rocca, G. Schimperna, M.E. Schonbek},
{\ti Analysis of a diffuse interface model of multispecies tumor growth},
{\jou Nonlinearity} 
\no{30}{1639--1658}{2017}

\bibitem{DGL}
{\au L. Ded\'{e}, H. Garcke, K.F. Lam},
{\ti A Hele-Shaw-Cahn-Hilliard model for incompressible two-phase flows with different densities},
{\jou J.\ Math.\ Fluid Mech.}
\no{20}{531--567}{2018}

\bibitem{DPGG2018}
{\au F. Della Porta, A. Giorgini, M. Grasselli},
{\ti The nonlocal Cahn-Hilliard-Hele-Shaw system with logarithmic potential},
{\jou Nonlinearity}
\no{31}{4851--4881}{2018}

%\bibitem{DFW15}
%{\au A.E. Diegel, X. Feng, S.M. Wise},
%{\ti Analysis of a mixed finite element method for a Cahn-Hilliard-Darcy-Stokes system},
%{\jou SIAM J.\ Numer.\ Anal.}, 
%\no{53}{127--152}{2015}

\bibitem{Fei2016}
{\au M. Fei},
{\ti Global sharp interface limit of the Hele-Shaw-Cahn-Hilliard system},
{\jou Math.\, Methods Appl.\, Sci.}
\no{40}{833--852}{2017}

\bibitem{Review2005}
{\au J.J. Feng, C. Liu, J. Shen, P. Yue},
{\ti An energetic variational formulation with phase field methods for interfacial dynamics of complex fluids: advantages and challenges},
{Modeling of soft matter, 1--26, IMA Vol.\, Math.\, Appl., 141, Springer, New York, 2005.}

%\bibitem{FW12}
%{\au X. Feng, S.M. Wise},
%{\ti Analysis of a Darcy-Cahn-Hilliard diffuse interface model for the Hele-Shaw flow and its fully discrete finite element approximation},
%{\jou SIAM J.\ Numer.\ Anal.}, 
%\no{50}{1320--1343}{2012}

\bibitem{FG2012}
{\au S. Frigeri, M. Grasselli}
{\ti Nonlocal Cahn-Hilliard-Navier-Stokes systems with singular potentials},
{\jou Dyn.\ Partial Differ.\ Equ.},
\no{24}{827--856}{2012}

\bibitem{FLRS2018}
{\au S. Frigeri, K.F. Lam, E. Rocca, G. Schimperna},
{\ti On a multi-species Cahn-Hilliard-Darcy tumor growth model with singular potentials},
{\jou Commun.\, Math.\, Sci.} 
\no{16}{821--856}{2018}

\bibitem{CJ2014}
{\au X. Fu, L. Cueto-Felgueroso, R. Juanes},
{\ti Viscous fingering with partially miscible fluids},
{\jou Phys.\, Rev.\, Fluids},
\no{2}{104001}{2017}

\bibitem{GLNS2017}
{\au H. Garcke, K.F. Lam, R. N\"{u}rnberg, E. Sitka},
{\ti A multiphase Cahn-Hilliard-Darcy model for tumour growth with necrosis}, 
{\jou Math.\ Models Methods Appl.\ Sci.}
\no{28}{525--577}{2018}

\bibitem{GLSS2016}
{\au H. Garcke, K.F. Lam, E. Sitka, V. Styles},
{\ti A Cahn-Hilliard-Darcy model for tumour growth with chemotaxis and active transport}, 
{\jou Math.\ Models Methods Appl.\ Sci.}
\no{26}{1095--1148}{2016}

\bibitem{GM}
{\au Y. Giga, T. Miyakawa},
{\ti Solutions in $L^r$ of the Navier-Stokes initial value problem},
{\jou Arch. Rational Mech. Anal.} 
\no{89}{267--281}{1985}

%\bibitem{GGM} A. Giorgini, M. Grasselli and A. Miranville,
% The Cahn--Hiliard--Oono equation with singular potential,
%Math. Models Methods Appl. Sci. \textbf{27} (2017), 2485--2510.

\bibitem{GGW2018} 
{\au A. Giorgini, M. Grasselli, H. Wu}, 
{\ti The Cahn-Hilliard-Hele-Shaw system with singular potential}, 
{\jou Ann.\ Inst.\ H.\ Poincar\'{e} Anal. Non Lin\'{e}aire}
\no{35}{1079--1118}{2018}

\bibitem{GMT2018}
{\au A. Giorgini, A. Miranville, R. Temam},
{\ti Uniqueness and Regularity for the Navier-Stokes-Cahn-Hilliard system},
{SIAM J. Math. Anal. (accepted for publication), 2019.}

\bibitem{GV2006}
{\au B. Gustafsson, A. Vasil'ev},
{\bk Conformal and potential analysis in Hele-Shaw cells}, 
\eds{Birkh\"{a}user}{Basel}{2006}

%\bibitem{GR} 
%{\au V. Girault, P.A. Raviart},
%{\bk Finite Element Methods for Navier-Stokes Equations. Theory and Algorithms},
%\eds{Springer Series in Computational Mathematics, \textbf{5}, Springer-Verlag}{Berlin}{1986}

%\bibitem{Han2016}
%{\au D. Han}, 
%{\ti A decoupled unconditionally stable numerical scheme for the Cahn-Hilliard-Hele-Shaw system}
%{\jou J.\, Sci.\, Comput.}
%\no{66}{1102--1121}{2016}

\bibitem{HSW2014}
{\au D. Han, D. Sun, X. Wang},
{\ti Two-phase flows in karstic geometry},
{\jou Math.\, Methods Appl.\, Sci.}
\no{37}{3048--3063}{2014}

\bibitem{HWW} 
{\au D. Han, X. Wang, H. Wu},
{\ti Existence and uniqueness of global weak solutions to a Cahn-Hilliard-Stokes-Darcy system for two phase incompressible flows in karstic geometry},
{\jou J.\ Differential Equations}
\no{257}{3887--3933}{2014}

\bibitem{HELESHAW1898}
{\au H.S. Hele-Shaw},
{\ti On the motion of a viscous fluid between two parallel plates},
{\jou Trans.\ Royal Inst.\ Nav.\ Archit.}
\nome{40}{1898}

\bibitem{JWZ}
{\au J. Jiang, H. Wu, S. Zheng},
{\ti Well-posedness and long-time behavior of a non-autonomous Cahn-Hilliard-Darcy system with mass source modeling tumor growth},
{\jou J.\ Differential Equations} 
\no{259}{3032--3077}{2015}

\bibitem{LHJSYK2014}
{\au D. Lee, J.Y. Huh, D. Jeong, J. Shin, A. Yun, J.S. Kim},
{\ti Physical, mathematical, and numerical derivations of the Cahn-Hilliard equation},
{\jou Comput.\ Mater.\ Sci.}
\no{81}{216--225}{2014}

\bibitem{LLG2002}
{\au H.-G. Lee, J.S. Lowengrub, J. Goodman}, 
{\ti Modeling pinch-off and reconnection in a Hele-Shaw cell. 
I. The models and their calibration},
{\jou Phys.\ Fluids} 
\no{14}{492--512}{2002}

\bibitem{LLG2002-2}
{\au H.-G. Lee, J.S. Lowengrub, J. Goodman}, 
{\ti Modeling pinch-off and reconnection in a Hele-Shaw cell. 
II. Analysis and simulation in the nonlinear regime},
{\jou Phys.\ Fluids} 
\no{14}{514--545}{2002}

%\bibitem{LCWW2017}
%{\au Y. Liu, W. Chen, C. Wang, S. M. Wise}, 
%{\ti Error analysis of a mixed finite element method for a Cahn-Hilliard-Hele-Shaw system},
%{\jou Numer.\, Math.} 
%\no{135}{679--709}{2017}

\bibitem{LFJCLMWC2010} 
{\au J.S. Lowengrub, H.B. Frieboes, F. Jin, Y.-L. Chuang, X. Li, P. Macklin, S. M. Wise, V. Cristini},
{\ti Nonlinear modelling of cancer:
bridging the gap between cells and tumours},
{\jou Nonlinearity}
\no{23}{R1--R91}{2010}

\bibitem{LTZ}
{\au J.S. Lowengrub, E. Titi, K. Zhao},
{\ti Analysis of a mixture model of tumor growth},
{\jou European J.\ Appl.\ Math.} 
\no{24}{691--734}{2013}

\bibitem{MR2018}
{\au S. Melchionna, E. Rocca}, 
{\ti Varifold solutions of a sharp interface limit of a diffuse interface model for tumor growth}, 
{\jou Interfaces and Free Bound.}
\no{19}{571--590}{2018}

%\bibitem{MM1972}
%{\au M. Marcus, V.J. Mizel},
%{\ti Absolute continuity on tracks and mappings of Sobolev spaces},
%{\jou Arch.\ Rational Mech.\ Anal.}
%\no{45}{294--320}{1972}

\bibitem{SW2018}
{\au J. Sprekels, H. Wu},
{\ti Optimal Distributed Control of a Cahn-Hilliard-Darcy System with Mass Sources},
{arXiv:1809.09037, 2018}

\bibitem{Temam1997}
{\au R. Temam},
{\ti Infinite-dimensional dynamical systems in mechanics and physics},
Springer-Verlag, New York, 1997.

\bibitem{Te01}
{\au R. Temam},
{\bk Navier-Stokes Equations: Theory and Numerical Analysis},
\eds{AMS}{Providence}{2001}

\bibitem{WW12}
{\au X. Wang, H. Wu},
{\ti Long-time behavior for the Hele-Shaw-Cahn-Hilliard system},
{\jou Asymptot.\ Anal.} 
\no{78}{217--245}{2012}

\bibitem{WZ13}
{\au X. Wang, Z. Zhang},
{\ti Well-posedness of the Hele-Shaw-Cahn-Hilliard system},
{\jou Ann.\ Inst.\ H.\ Poincar\'e Anal.\ Non Lin\'eaire}, 
\no{30}{367--384}{2013}

%\bibitem{W}
%{\au S.M. Wise},
%{\ti Unconditionally stable finite difference, nonlinear multigrid simulations of the Cahn-Hilliard-Hele-Shaw system of equations},
%{\jou J.\ Sci.\ Comput.} 
%\no{44}{38--68}{2010}

\end{thebibliography}
\end{document}